\documentclass[reqno,10pt]{amsart}
\usepackage{amssymb}
\usepackage{latexsym}
\usepackage{hyperref}
\usepackage{amsmath}
\usepackage{amscd}
\usepackage{color}
\usepackage{enumerate}
\usepackage{amsfonts}
\usepackage{graphicx}
\usepackage{mathrsfs}
\usepackage{pifont}
\usepackage{setspace}

\usepackage{tabu}
\newcommand{\eps}{{\varepsilon}}
\theoremstyle{plain}
\newtheorem{theorem}{Theorem}
\newtheorem{proposition}[theorem]{Proposition}
\newtheorem{lemma}[theorem]{Lemma}
\newtheorem{corollary}[theorem]{Corollary}

\theoremstyle{definition}
\newtheorem{definition}[theorem]{Definition}
\newtheorem{remark}[theorem]{Remark}

\numberwithin{equation}{section}
\numberwithin{theorem}{section}
\let\Re=\undefined\DeclareMathOperator*{\Re}{Re}
\let\Im=\undefined\DeclareMathOperator*{\Im}{Im}

\def\ge{\geqslant}
\def\le{\leqslant}
\def\geq{\geqslant}
\def\leq{\leqslant}
\def\b{\boldsymbol}
\def\N{\mathbb{N}}
\def\R{\mathbb{R}}
\def\C{\mathbb{C}}

\begin{document}
\title[NLS system in $\mathbb{R}^6$]{Scattering theory For Quadratic Nonlinear Schr\"odinger System in dimension six}

\author[C. Gao]{Chuanwei Gao}
\address{Beijing International Center for Mathematical Research,  Peking University,  \ Beijing, \ China, \ 100871}
\email{cwgao@pku.edu.cn}

\author[F. Meng]{Fanfei Meng}
\address{Graduate School of China Academy of Engineering Physics,  \ Beijing, \ China, \ 100088}
\email{mengfanfei17@gscaep.ac.cn}

\author[C. Xu]{Chengbin Xu}
\address{Graduate School of China Academy of Engineering Physics,  \ Beijing, \ China, \ 100088}
\email{xuchengbin19@gscaep.ac.cn}

\author[J. Zheng]{Jiqiang Zheng}
\address{Institute for Applied Physics and Computational Mathematics,  \ Beijing, \ China, \ 100088}
\email{zhengjiqiang@gmail.com, zheng\_jiqiang@iapcm.ac.cn}

\subjclass[2010]{Primary 35Q55}
\date{\today}
\keywords{Energy-critical, quadratic nonlinear Schr\"odinger system, blow-up, scatter.}
\maketitle

\begin{abstract}
In this paper, we study the solutions to the energy-critical quadratic nonlinear Schr\"odinger system in $ {\dot H}^1 \times {\dot H}^1 $, where the sign of its potential energy can not be determined directly.
If the initial  data ${\rm u}_0$ is radial or non-radial but satisfies the mass-resonance condition, and its energy is below that of the ground state,  using the compactness/rigidity method, we give a complete classification of scattering versus blowing-up dichotomies depending on whether the kinetic energy of  ${\rm u}_0$ is below or above that of the ground state.
\end{abstract}

\maketitle
\section{Introduction}
We consider the quadratic nonlinear Schr\"odinger system:
\begin{equation}\tag{$\text{NLS system}$}
\left\{
\begin{aligned}
& i \partial_{t}\b{\rm u} + A \b{\rm u} + \b{\rm f}(\b{\rm u}) = \b{\rm 0}, \quad (t, x) \in \mathbb{R} \times \mathbb{R}^d,\\
& \b{\rm u}(0,x)= \b{\rm u}_0(x),
\end{aligned}
\right.
\label{NLS system}
\end{equation}
where $ \b{\rm u} $, $ \b{\rm u}_0 $, and $ \b{\rm 0} $ are all vector-valued complex functions with two components  defined as follows
\begin{equation*}
\b{\rm u} := \left(
\begin{aligned}
u \\
v
\end{aligned}
\right),~\b{\rm u_0} := \left(
\begin{aligned}
u_0 \\
v_0
\end{aligned}
\right),~ \b{\rm 0}: = \left(
\begin{aligned}
0 \\
0
\end{aligned}
\right),
\end{equation*}
and $ A $ is a $ 2 \times 2$ matrix, $ \b{\rm f} : \C^2 \to \C^2 $ as follows
\begin{equation}\label{mass-res}
A := \left(
\begin{aligned}
\Delta \quad & ~~~~ 0 \\
0 \quad & ~~~~ \kappa\Delta
\end{aligned}
\right), ~ \b{\rm f}(\b{\rm u}):= \left(
\begin{aligned}
v\overline{u} \\
u^2
\end{aligned}
\right),
\end{equation}
$ u,v: \mathbb{R} \times \mathbb{R}^{d} \rightarrow \mathbb{C} $ are unknown functions, $ \kappa \in (0, \infty) $ is a real number and $ \Delta $ denotes the Laplacian in $ \mathbb{R}^{d} $.

From a  physical point view, \eqref{NLS system} is related to the Raman amplification in a plasma. This process is a nonlinear instability phenomenon (see \cite{Colin2009, Colin2016, Kivshar2000, Koynov1998} for more details). Solutions to \eqref{NLS system} preserve the \emph{mass}, \emph{energy} and \emph{momentum}, defined respectively by
\begin{equation*}
\begin{aligned}
M(\b{\rm u}) :& = \Vert u \Vert_{L^2}^2 + \Vert v \Vert_{L^2}^2 \equiv M(\b{\rm u}_{0}), \\
E(\b{\rm u}) :& = H(\b{\rm u}) - R(\b{\rm u}) \equiv E(\b{\rm u}_{0}), \\
P(\b{\rm u}) :& = \Im \int_{\mathbb{R}^d} \left( \overline{u}\nabla u + \frac{1}{2}\overline{v}\nabla v \right) {\rm d}x \equiv P(\b{\rm u}_0),
\end{aligned}
\end{equation*}
where
\[
\begin{aligned}
  & (\text{kinetic energy}) \quad & H(\b{\rm u}): & = \Vert u \Vert_{{\dot H}^1}^2 + \frac{\kappa}{2} \Vert v \Vert_{{\dot H}^1}^2, \\
  & (\text{potential energy}) \quad & R(\b{\rm u}) :& = \Re \int_{\mathbb{R}^d} \overline{v}u^2 {\rm d}x.
\end{aligned}
\]

The equation \eqref{NLS system} is invariant under the scaling
\[
\b{\rm u}_{\lambda}(t, x) = \lambda^2 \b{\rm u} \left( \lambda^2 t, \lambda x \right)
\]
for any $ \lambda > 0 $.
The critical regularity of Sobolev space is $ {\rm \dot{H}}^{s_c} $,
i.e. $ \Vert \b{\rm u}_{\lambda} \Vert_{{\rm \dot{H}}_{x}^{s_c}(\mathbb{R}^d)} =  \Vert \b{\rm u} \Vert_{{\rm \dot{H}}_{x}^{s_c}(\mathbb{R}^d)} $ where $ s_c = \frac{d}{2} - 2 $.
Therefore, the equation \eqref{NLS system} is called mass-subcritical if $ d \le 3 $, mass-critical if $ d=4 $, energy-subcritical if $ d=5 $, and energy-critical if $ d=6 $. We say \eqref{NLS system} satisfies the mass-resonance condition if $\kappa=\frac{1}{2}$. A distinguished and exclusive feature of this case is that  \eqref{NLS system} is left invariant under the following Galilean transformation
\[
\left(
\begin{aligned}
u(t, x) \\
v(t, x)
\end{aligned}
\right) \to \left(
\begin{aligned}
e^{ix\cdot\xi}e^{-it\vert\xi\vert^2}u(t, x-2t\xi) \\
e^{2ix\cdot\xi}e^{-2it\vert\xi\vert^2}v(t, x-2t\xi)
\end{aligned}
\right)
\]
for any $ \xi \in \mathbb{R}^d $.

From a mathematical point view,  even though the fact $ A \le 0 $ (i.e. $ \langle A\b{\rm u}, \b{\rm u} \rangle \le 0, ~ \forall ~ \b{\rm u} \in {\rm\dot H}^1(\R^d) $) superficially suggests that \eqref{NLS system} is similar to the classical nonlinear Schr\"odinger equation:
\[\tag{$\text{NLS}$}
i\partial_{t}u + \Delta u + \mu \vert u \vert^{p-1} u = 0, \quad (t, x) \in \mathbb{R} \times \mathbb{R}^d,
\label{NLS}
\]
the truth is quite different. One prominent difference among others is that there is no classification of the so-called focusing ($ \mu = 1 $) and defocusing ($ \mu = -1 $) in \eqref{NLS system}, which can be seen from the fact the sign of potential energy $ R(\b{\rm u}) $ can not be judged directly from $\mu$.

In this paper, we consider the energy-critical case of \eqref{NLS system} in  the critical Sobolev space, that is, $ d = 6 $ and $ \b{\rm u}_0 \in {\rm\dot H}^1(\R^6) $.
For any $ s \ge 0 $, we use $ {\dot{\rm H}}^s $ to denote Hilbert space $ {\dot H}^s \times {\dot H}^s $, and $ {\rm H}^s $ to denote Hilbert space $ H^s \times H^s $.
For any $ p \ge 1 $, we use $ {\rm L}^p $ to denote $ L^p \times L^p $.
We also denote the Schr\"odinger group $ \verb"S"(t) $ as follows
\[
\verb"S"(t) := \left(
\begin{aligned}
e^{it\Delta} \qquad 0 \quad \\
\quad 0 \qquad e^{\kappa it\Delta}
\end{aligned}
\right).
\]
By solution, we mean a function $ \b{\rm u} \in C_{t}(I, {\rm\dot H}_{x}^1(\mathbb{R}^6)) \cap {\rm L}_{t, x}^4(I \times \mathbb{R}^6) $ on an interval $ I \ni 0 $ satisfying the Duhamel formula
\[
\b{\rm u}(t) = \verb"S"(t) \b{\rm u}_0 + i \int_{0}^{t} \verb"S"(t - \tau) \b{\rm f}(\b{\rm u}(\tau)) {\rm d}\tau
\label{def}
\]
for $ t \in I $.
If the maximal interval of existence $ I_{\max} = (T_-(\b{\rm u}), T_+(\b{\rm u})) = \mathbb{R} $, we call the solution $ \b{\rm u} $ is global.

We say that a solution $ \b{\rm u} $ to \eqref{NLS system} blows up forward in time if there exists a time $ t_1 \in I_{\max} = (T_-(\b{\rm u}), T_+(\b{\rm u})) $ such that
\[
S_{[t_1, T_+(\b{\rm u}))} (\b{\rm u}) = \infty,
\]
that $ \b{\rm u} $ blows up backward in time if there exists a time $ t_1 \in I_{\max} $ such that
\[
S_{(T_-(\b{\rm u}), t_1]} (\b{\rm u}) = \infty,
\]
and that a global solution $ \b{\rm u} $ scatters if there exists $ \b{\rm u}_{\pm} \in {\rm\dot H}^1(\mathbb{R}^6) $ such that
\begin{equation*}
\lim_{t \to \pm \infty} \left\Vert \b{\rm u}(t) - \verb"S"(t) \b{\rm u}_{\pm} \right\Vert_{{\rm\dot H}^1(\mathbb{R}^6)} = 0.
\end{equation*}

\begin{definition}[Trivial scattering solution, \cite{Masaki2021}]\label{def:trialsc}
We say a solution to a nonlinear equation is a \emph{trivial scattering solution} if it also solves the corresponding linear equation.
(In other words, the  equation does not have nonlinear interaction.)
\end{definition}

\begin{remark}
For \eqref{NLS}, zero is the only trivial scattering solution.
While for \eqref{NLS system}, $ (u, v) = (0, e^{\kappa it\Delta} v_0) $ is  a trivial scattering solution for any $ v_0 \in {\dot H}^1(\R^6) $.
\end{remark}

\begin{definition}[Scattering size, \cite{Visan2012}]\label{scattering size}
The \emph{scattering size} of a solution $ \b{\rm u} = (u, v)^T $ to \eqref{NLS system} on the interval $ I \ni 0 $ is
\[
S_I (\b{\rm u}) = \Vert \b{\rm u} \Vert_{{\rm L}_{t, x}^4(I \times \R^6)}^4 \sim \int_I \int_{\R^6} \vert u(t, x) \vert^4 + \vert v(t, x) \vert^4 {\rm d}x {\rm d}t.
\]
\end{definition}

\begin{remark}[$ {\rm\dot H}^1 $ sacttering]\label{sc}
It is easy to verify that the solution to \eqref{NLS system} will scatter in $ {\rm\dot H}^1(\mathbb{R}^6) $ as $ t \to + \infty $ if the solution $ \b{\rm u}(t) $ to \eqref{NLS system} is global in $ {\rm\dot H}^1(\mathbb{R}^6) $ with finite scattering size on $ \R $, i.e., $ S_{\R} (\b{\rm u}) < + \infty $.
\end{remark}

The scattering solutions to \eqref{NLS} are attractive to many  researchers,  such as \cite{Dodson2016, Duyckaerts2010, Holmer2008, Miao2013} for the focusing case and \cite{Miao2014, Visan2007} for the defocusing case and so on. A quite interesting problem is the defoucusing energy-critical case, i.e. $p=1+\frac{4}{d-2}$, one may refer to \cite{Bourgain1999, Colliander2006, Killip2010, Tao2005} for more details.

For \eqref{NLS system} and its related models, there are amount of literature on those themes, such as \cite{Pastor2015, Pastor2019, Uriya2016} for three waves interaction and \cite{Hayashi2013, Inui2018, Noguera20201} for quadratic-type interaction.
In \cite{Hayashi2011, Iwabuchi2016, Ogawa2015}, the authors have studied the problems of \eqref{NLS system} that arise in 2D.
There are also many papers  on  the well-posedness and dynamics of the solutions to \eqref{NLS system} with data below the ground state $ \b{\rm Q} $ in  dimensions, $ 1 \le d \le 5 $, where $ \b{\rm Q} := (\phi, \varphi)^T \in D_d $ is a pair of non-negative real-valued functions satisfying:
\begin{equation}
\left\{
\begin{aligned}
& \phi - \Delta \phi = \phi \varphi, \\
& 2\varphi - \kappa\Delta \varphi = \phi^2,
\end{aligned}
\right.
\quad x \in \R^d,
\label{d}
\end{equation}
with $ D_d := \left\{ \b{\rm G} \in {\rm H}^1(\R^d) ~ \big\vert ~ R(\b{\rm G}) > 0 \right\} $.  Among others, up to translations, Hayashi, Li and  Naumkin \cite{Hayashi2011} proved the uniqueness of $ \b{\rm Q}:=(\phi_1, \frac{\phi_1}{\sqrt2})^T $ when $ \kappa = 2 $, where $ \phi_1 \ne 0 $ is the unique non-negative radial real-valued solution to
\[
\phi_1 - \Delta \phi_1 = \frac{1}{\sqrt2} \phi_1^2.
\]
With and without mass-resonance, the mass-critical case ($ d = 4 $) was studied in \cite{Inui2019} and the energy-subcritical case ($ d = 5 $) was studied separately in \cite{Hamano2018, Hamano2019, Meng2021, Noguera2020, Wang2019}.

In this paper, we concern about the energy-critical case ($ d = 6 $) of \eqref{NLS system}.
Our results show the classification of the solution as to scattering or blowing-up depending on the relationship between the initial data $u_0$ and the ground state $ \b{\rm W} := (\phi, \varphi)^T \in D_6 $, where $(\phi, \varphi)^t$ is a pair of non-negative real-valued functions satisfying:
\begin{equation}
\left\{
\begin{aligned}
& - \Delta \phi = \phi \varphi, \\
& - \kappa\Delta \varphi = \phi^2,
\end{aligned}
\right.
\quad x \in \mathbb{R}^6,
\label{gs}
\end{equation}
and $ D_6 := \left\{ \b{\rm G} \in {\rm\dot H}^1(\R^6) \cap {\rm L}^3(\R^6) ~ \big\vert ~ R(\b{\rm G}) > 0 \right\} $.

At this point, some useful information of $W$ are discussed. Define the operator $ T : ({\dot H}^1 \cap L^3)_r(\R^6) \to H^1(\R) $ as follows:
\[(Tf)(t) := e^{2t} f(e^t), \qquad t \in \R.
\]
By showing the map $T$ gives a one-to-one correspondence between the ground states of \eqref{d} and those of \eqref{gs}, up to translations and dilations, \cite{Hayashi2011}  proved the uniqueness of $ \b{\rm W} $ for any $ \kappa \in (0, \infty) $.  Therefore, $ \b{\rm W} = (\phi_0, \frac{\phi_0}{\sqrt{\kappa}})^T $,  where $ \phi_0 \ne 0 $ is the unique non-negative radial real-valued solution to
\begin{equation}
- \Delta \phi_0 = \frac{1}{\sqrt{\kappa}} \phi_0^2.
\label{6}
\end{equation}
Furthermore, we may write $\phi_0$ explicitly as follows
\[
\phi_0(x) = \frac{\sqrt{\kappa}}{\left( 1 + \frac{\vert x \vert^2}{24} \right)^2} \in {\dot H}^1(\R^6),
\]
one may refer to \cite{Kenig2006} for more details.

 Our main results read as follows:
\begin{theorem}
\label{classification}
For $ d = 6 $, $ \b{\rm u}_0 \in {\rm\dot H}^1(\R^6) $, and $ E(\b{\rm u_0}) < E(\b{\rm W}) $ in the \eqref{NLS system},
\begin{enumerate}
\item if $ H(\b{\rm u_0}) < H(\b{\rm W}) $, then $ \b{\rm u} $ scatters for $ \b{\rm u}_0 $ is non-radial with the mass-resonance condition or $ \b{\rm u}_0 $ is radial;
\item if $ H(\b{\rm u_0}) > H(\b{\rm W}) $, then $ \b{\rm u} $ blows up in finite time for $u_0$ is radial with $ x \b{\rm u}_0 \in {\rm L}^2(\R^6) $ or $ \b{\rm u}_0 \in {\rm H}^1(\R^6) $.
\end{enumerate}
\end{theorem}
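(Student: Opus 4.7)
The plan is to follow the by-now standard Kenig--Merle concentration-compactness/rigidity scheme for part (1) and a Glassey-type convexity argument for part (2). As a common preliminary, I would first extract the variational information encoded in the ground-state system \eqref{gs}: multiplying $-\Delta\phi=\phi\varphi$ by $\phi$ and $-\kappa\Delta\varphi=\phi^2$ by $\varphi$ and integrating yields $\|\phi\|_{\dot H^1}^2=\kappa\|\varphi\|_{\dot H^1}^2=R(\b{\rm W})$, hence $H(\b{\rm W})=\tfrac32 R(\b{\rm W})$ and $E(\b{\rm W})=\tfrac13 H(\b{\rm W})$. A sharp trilinear Sobolev-type inequality $R(\b{\rm u})\leq C_*\,H(\b{\rm u})^{3/2}$, attained by $\b{\rm W}$, then implies that in the sub-threshold regime $E(\b{\rm u}_0)<E(\b{\rm W})$ each of the sets $\{H<H(\b{\rm W})\}$ and $\{H>H(\b{\rm W})\}$ is invariant under the flow and that the coercive quantity $2H(\b{\rm u})-3R(\b{\rm u})$ has a uniform positive (respectively negative) gap along the trajectory.

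For part (1), small-data scattering via Strichartz gives the base case, and the standard concentration-compactness reduction localizes the general claim to the extinction of a minimal non-scattering critical element. To produce such an element, I would develop a linear profile decomposition adapted to the two-component group with entries $e^{it\Delta}$ and $e^{\kappa it\Delta}$. In the radial setting, only scales and time-translations survive as noncompact symmetries; in the non-radial mass-resonance setting $\kappa=\tfrac12$, the Galilean boost acts consistently on both components, so space and frequency translations can be absorbed into the profiles in a compatible way. This yields an almost-periodic (modulo symmetries) critical element with $E<E(\b{\rm W})$ and $H<H(\b{\rm W})$. To kill it, I would use the virial/Morawetz identity
\[
\frac{d^2}{dt^2}\int |x|^2\bigl(|u|^2+\tfrac12|v|^2\bigr)\,dx \;=\; c\bigl(2H(\b{\rm u})-3R(\b{\rm u})\bigr),
\]
suitably localized, whose right-hand side is coercively positive by the variational step; the compactness of the critical orbit then closes the contradiction.

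For part (2), the same identity is used with its negative sign above the threshold. In the radial finite-variance case $x\b{\rm u}_0\in{\rm L}^2(\R^6)$, the classical Glassey argument applied to $V(t):=\int|x|^2\bigl(|u|^2+\tfrac12|v|^2\bigr)\,dx$ shows that $V$ is strictly concave with curvature uniformly bounded above by a negative constant, forcing $V$ to become negative in finite time unless $T_+(\b{\rm u})<\infty$. To relax the finite-variance assumption to $\b{\rm u}_0\in{\rm H}^1(\R^6)$, I would employ an Ogawa--Tsutsumi-type truncated virial with a smooth radial cutoff at scale $R$, and absorb the cutoff error terms using radial Sobolev decay.

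The \emph{hardest part} is the scattering direction, and within it the extraction and extinction of the critical element. The coupling of the two Schr\"odinger groups with distinct dispersion speeds $1$ and $\kappa$ complicates both the linear profile decomposition and the nonlinear profile reconstruction, and it is precisely at this step that one is forced to restrict to the radial framework or to the mass-resonance $\kappa=\tfrac12$ at which the Galilean boost becomes a genuine symmetry of the nonlinear flow.
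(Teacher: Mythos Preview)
Your overall architecture---variational trapping, Glassey/truncated virial for part (2), and Kenig--Merle concentration-compactness for part (1)---matches the paper. The variational preliminaries and the blow-up argument are essentially the same.

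The rigidity step for part (1), however, has a genuine gap. You write that once the almost-periodic critical element $\b{\rm u}_c$ is produced, the localized virial identity combined with ``the compactness of the critical orbit then closes the contradiction.'' This does not work directly in the energy-critical setting: $\b{\rm u}_c$ is a priori only in $\dot{\rm H}^1$, and the first-order virial term $V_R'(t)=c\,\Im\int(2\bar u_c\nabla u_c+\bar v_c\nabla v_c)\cdot\nabla a\,dx$ cannot be bounded by $O(R)$ without $L^2$ (mass) control of $\b{\rm u}_c$. The paper supplies this through a \emph{negative regularity} theorem (their Theorem~\ref{negative regularity}): using the reduced Duhamel formula and a frequency recurrence one shows $\b{\rm u}_c\in L_t^\infty\dot{\rm H}^{-\varepsilon}$, hence $\b{\rm u}_c\in L_t^\infty L_x^2$ by interpolation. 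Without this step your virial argument is unbounded and the contradiction does not close.

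Relatedly, the paper does not kill the critical element in one stroke but first classifies it into three scenarios---finite-time blow-up, soliton-like ($\lambda(t)\equiv 1$), and low-to-high frequency cascade ($\inf\lambda=1$, $\limsup\lambda=\infty$)---and treats each separately. The cascade is \emph{not} excluded by virial; instead one combines the negative regularity with $\lambda(t_n)\to\infty$ along a subsequence to force $M(\b{\rm u}_c)=0$. Your proposal omits this scenario entirely.

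Finally, you locate the role of mass-resonance in the profile decomposition. In the paper the $\dot{\rm H}^1$ profile decomposition uses only scaling, space and time translations (no Galilean boosts); the condition $\kappa=\tfrac12$ enters later, in showing that the minimal-kinetic-energy critical element has $P(\b{\rm u}_c)=0$ (by boosting to a lower-$H$ blow-up otherwise), which in turn yields $|x(t)|=o(t)$ and makes the soliton virial argument effective.
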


\begin{remark}
From the proof of Proposition \ref{coer}, the restriction $ E(\b{\rm u_0}) < E(\b{\rm W}) $ implies $ H(\b{\rm u_0}) \ne H(\b{\rm W}) $,  which indicates the completeness of our classification.

\end{remark}

\begin{remark}[Reason of the mass-resonance condition]
The mass-resonance condition enables us to use the Galilean invariance of the critical solution $ \b{\rm u}_c $ to \eqref{NLS system} which is very useful in proving zero momentum  $ P(\b{\rm u}_c(t)) \equiv \b{\rm 0} $ for the non-radial case and is only available when $\kappa=\frac{1}{2}$, that is the essential reason  why we require  $ \kappa = \frac12 $ for the non-radial case in Theorem \ref{classification}. One can refer to the proof of Proposition \ref{P=0} in this paper or the statement of Remark 1.2 in \cite{Meng2021} for more details. Under the assumption $ \b{\rm u}_0 $ is non-radial with the mass-resonance condition or $ \b{\rm u}_0 $ is radial, the scattering part of the results in Theorem \ref{classification} is consistent with that of the mass-critical case ($ d = 4 $) and the energy-subcritical case ($ d = 5 $). One can refer to \cite{Hamano2018, Hamano2019, Inui2019, Meng2021, Noguera2020, Wang2019}.
\end{remark}

 To prove the blowing-up result in Theorem \ref{classification}, we exploit the Virial identity in Subsection \ref{idea}.
To prove the scattering result, we follow the strategies  from \cite{Bourgain1999, Kenig2006, Tao2005} through analysing a kind of special solution, so called ``critical solution", in Section \ref{nonradial} and Section \ref{non}.
After a series of reductions, the ``critical solution"  shares the property of almost periodicity modulo symmetries:

\begin{definition}[Almost periodic solution, \cite{Visan2012}]\label{almost periodic}
We call a solution $ \b{\rm u} = (u, v)^T $ to \eqref{NLS system} is \emph{almost periodic modulo symmetries} with lifespan $ I \ni 0 $, if there exist functions $ \lambda : I \to \R^+ $, $ x : I \to \R^6 $, and $ C : \R^+ \to \R^+ $ such that
\[
\int_{\vert x - x(t) \vert \ge \frac{C(\eta)}{\lambda(t)}} \left( \vert \nabla u(t, x) \vert^2 + \frac{\kappa}{2} \vert \nabla v(t, x) \vert^2 \right) {\rm d}x \le \eta
\]
and
\[
\int_{\vert \xi \vert \ge C(\eta)\lambda(t)} \left( \vert \xi \vert^2 \vert \hat{u}(t, \xi) \vert^2 + \frac{\kappa}{2} \vert \xi \vert^2 \vert \hat{v}(t, \xi) \vert^2 \right) {\rm d}\xi \le \eta
\]
for all $ t \in I $ and $ \eta > 0 $.
We refer to the function $ \lambda $ as the \emph{frequency scale function}, $ x $  as the \emph{spatial center function}, and $ C $ as the \emph{compactness modulus function}.
\end{definition}

 From the Arzela-Ascoli Theorem, we have

\begin{lemma}[Compactness in $ L^2 $]\label{compactness in L^2}
The set $ \mathcal{F} \subset L^2(\R^d) $ is compact in $ L^2(\R^d) $ if and only if the following conditions hold:

(i) there exists $ A > 0 $ such that $ \Vert f \Vert_{L^2(\R^d)} \le A, \quad \forall ~ f \in \mathcal{F} $;

(ii) for any $ \varepsilon > 0 $, there exists $ R = R(\varepsilon) > 0 $ such that
\[
\int_{\vert x \vert \ge R} \vert f(x) \vert^2 {\rm d}x + \int_{\vert \xi \vert \ge R} \vert \hat{f}(\xi) \vert^2 {\rm d}\xi < \varepsilon, \qquad \forall ~ f \in \mathcal{F}.
\]
\end{lemma}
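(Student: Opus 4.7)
The plan is to prove both directions separately, with necessity almost immediate and sufficiency providing the content. For necessity, (i) is just the fact that compact subsets of a metric space are bounded. For (ii), fix $\varepsilon > 0$, use the compactness of $\mathcal{F}$ to select a finite $\tfrac{\varepsilon}{2}$-net $\{f_1, \dots, f_N\} \subset \mathcal{F}$, and apply dominated convergence (together with Plancherel for the frequency side) to each $f_j$ to find a single $R$ for which all individual tails are $<\tfrac{\varepsilon}{4}$; the triangle inequality then transfers this to every $f \in \mathcal{F}$.

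For sufficiency I extract a Cauchy subsequence from any sequence $\{f_n\} \subset \mathcal{F}$. Choose $\psi \in C_c^\infty(\mathbb{R}^d)$ with $0 \le \psi \le 1$ and $\psi \equiv 1$ on $B(0,1)$, and for $R > 0$ set
\[
g_n^R(x) := \psi\!\left(\tfrac{x}{R}\right) \mathcal{F}^{-1}\!\Bigl(\psi\!\left(\tfrac{\xi}{R}\right) \hat f_n(\xi)\Bigr)(x).
\]
Decomposing $f_n - g_n^R$ into a spatial-truncation error and a frequency-truncation error, using Plancherel for the latter and hypothesis (ii) for both, one obtains $\|f_n - g_n^R\|_{L^2} \le \omega(R)$, where $\omega(R) \to 0$ as $R \to \infty$, \emph{uniformly} in $n$.

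For fixed $R$ the family $\{g_n^R\}$ lies in $C_c(\overline{B(0,R)})$. Since $\mathcal{F}^{-1}(\psi(\xi/R)\hat f_n)$ has Fourier support in $\{|\xi|\le R\}$ and is uniformly bounded in $L^2$ by (i), it is smooth with every derivative bounded by a constant depending only on $R$ and $A$; multiplication by the fixed cutoff $\psi(\cdot/R)$ preserves these bounds. Hence $\{g_n^R\}_n$ is uniformly bounded and equicontinuous on the compact ball $\overline{B(0,R)}$. The Arzel\`a--Ascoli theorem produces a uniformly convergent subsequence, which, because of the compact support, also converges in $L^2(\mathbb{R}^d)$. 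A standard diagonal argument along a sequence $R_k \to \infty$, combined with the uniform bound $\|f_n - g_n^{R_k}\|_{L^2} \le \omega(R_k)$, then yields a subsequence of $\{f_n\}$ that is Cauchy in $L^2$.

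The principal technical point is the uniformity in $n$ of the approximation $g_n^R \to f_n$: this is precisely where condition (ii) plays its role, since without uniform tail control the diagonal argument would break down. Everything else is routine checking that simultaneous space/frequency cutoffs produce functions equicontinuous on a fixed ball.
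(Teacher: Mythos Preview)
Your proof is correct. The paper does not actually supply a proof of this lemma; it merely precedes the statement with the phrase ``From the Arzel\`a--Ascoli Theorem, we have'' and leaves the details to the reader. Your argument is precisely the standard fleshing-out of that remark: the necessity direction via a finite $\varepsilon$-net is routine, and for sufficiency you regularize by simultaneous smooth truncation in space and frequency, invoke Arzel\`a--Ascoli on the resulting equicontinuous, compactly supported family, and close with a diagonal argument whose uniformity is supplied exactly by hypothesis (ii). One cosmetic point: since $\psi \in C_c^\infty$ with $\psi \equiv 1$ on $B(0,1)$, its support must be strictly larger than $B(0,1)$, so $g_n^R$ is supported in a ball of radius $cR$ for some fixed $c>1$ rather than exactly $\overline{B(0,R)}$; this changes nothing in the argument.
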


\begin{remark}\label{compactness in H^1}
By Lemma \ref{compactness in L^2}, the set $ \mathcal{F}_1 \subset {\dot H}^1(\R^d) $ is compact in $ {\dot H}^1(\R^d) $ if and only if $ \mathcal{F}_1 $ is bounded  in $\dot{H}^1$ uniformly and for any $ \eta > 0 $, there exists a compactness modulus function $ C(\eta) > 0 $ such that
\begin{equation*}
\int_{\vert x \vert \ge C(\eta)} \vert \nabla f(x) \vert^2 {\rm d}x + \int_{\vert \xi \vert \ge C(\eta)} \vert \xi \vert^2 \vert \hat{f}(\xi) \vert^2 {\rm d}\xi < \eta, \qquad \forall ~ f \in \mathcal{F}_1.
\end{equation*}
\end{remark}

By using the Palais-Smale condition modulo symmetries, Proposition \ref{PS}, we know that the critical solutions have to be almost periodic and minimal-kinetic-energy blowing-up which is proven in Subsection \ref{birth} and stated as follows:
\begin{theorem}\label{minH}
If the scattering result of Theorem \ref{classification} fails, then there exists a solution $ \b{\rm u}_c : I_c \times \R^6 \to \C^2 $ with the maximal interval of existence $ I_c $ satisfying
\[
\sup_{t \in I_c} H(\b{\rm u}_c(t)) < H(\b{\rm W}),
\]
which is almost periodic modulo symmetries and blows up in both directions. Furthermore, $ \b{\rm u}_c $ has the minimal kinetic energy among all the blowing-up solutions, that is,
\[
\sup_{t \in I_c} H(\b{\rm u}_c(t)) \le \sup_{t \in I} H(\b{\rm u}(t)),
\]
where $ \b{\rm u} : I \times \R^6 \to \C^2 $ is a solution to \eqref{NLS system} with the maximal interval of existence $ I $ and blows up in at least one direction.
\end{theorem}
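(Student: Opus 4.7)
The plan is the standard Kenig--Merle concentration-compactness scheme, with Proposition \ref{PS} (the Palais--Smale condition modulo symmetries, referenced in the excerpt) as the key compactness input. First I define the critical blow-up kinetic energy
\[
H_c := \inf\Bigl\{\sup_{t \in I} H(\b{\rm u}(t)) \;:\; \b{\rm u} \text{ solves \eqref{NLS system} with } E(\b{\rm u}_0) < E(\b{\rm W}) \text{ and blows up in at least one direction}\Bigr\},
\]
where blow-up in a direction is measured by infinite scattering size $S$. The assumed failure of the scattering part of Theorem \ref{classification}, together with Remark \ref{sc}, produces a blow-up solution with $H(\b{\rm u}_0) < H(\b{\rm W})$, so $H_c$ is finite; the small-data theory bounds $H_c$ away from $0$, and the coercivity established in Proposition \ref{coer} gives $H_c < H(\b{\rm W})$.

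Next I take a minimizing sequence of blowing-up solutions $\b{\rm u}_n : I_n \times \R^6 \to \C^2$ with $\sup_{t \in I_n} H(\b{\rm u}_n(t)) \to H_c$. By time translation and the local theory, I select base points $t_n \in I_n$ such that both $S_{[t_n,\,T_+(\b{\rm u}_n))}(\b{\rm u}_n) = \infty$ and $S_{(T_-(\b{\rm u}_n),\,t_n]}(\b{\rm u}_n) = \infty$. Shifting so that $t_n = 0$, the sequence $\{\b{\rm u}_n(0)\}$ fits the hypothesis of Proposition \ref{PS}; after renormalizing by scalings and spatial translations from the symmetry group, I pass to a subsequence with $\b{\rm u}_n(0) \to \b{\rm u}_{c,0}$ in $\dot{\rm H}^1(\R^6)$.

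I then let $\b{\rm u}_c$ denote the maximal-lifespan solution with initial data $\b{\rm u}_{c,0}$ on its lifespan $I_c$, and claim $\b{\rm u}_c$ realizes the infimum $H_c$ and blows up in both directions. The long-time perturbation (stability) theorem propagates scattering-size information: if $\b{\rm u}_c$ had finite forward scattering size, then for large $n$ the (symmetry-adjusted) $\b{\rm u}_n$ would inherit a uniform forward bound on $S$, contradicting the choice of $t_n$; likewise backward. Hence $\b{\rm u}_c$ blows up in both time directions, and the lower semicontinuity of $H$ under the $\dot{\rm H}^1$ convergence gives $\sup_{t \in I_c} H(\b{\rm u}_c(t)) = H_c$, which is precisely the minimality assertion of the theorem.

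For almost periodicity modulo symmetries, given any sequence $t_n' \in I_c$, the shifts $\b{\rm u}_c(\cdot + t_n')$ are again minimal blowing-up solutions at threshold $H_c$ with infinite scattering size in both directions, so Proposition \ref{PS} applied to $\{\b{\rm u}_c(t_n')\}$ yields a subsequence converging in $\dot{\rm H}^1$ after suitable scalings and spatial translations. Thus the orbit $\{\b{\rm u}_c(t) : t \in I_c\}$ is precompact in $\dot{\rm H}^1$ modulo the noncompact symmetry group, and the criterion in Remark \ref{compactness in H^1} provides the frequency scale $\lambda(t)$, spatial center $x(t)$ and compactness modulus $C(\eta)$ demanded by Definition \ref{almost periodic}. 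The main obstacle throughout is careful bookkeeping of the noncompact symmetry group and judicious use of the stability theorem to transfer scattering-size information from the $\b{\rm u}_n$ to the limit $\b{\rm u}_c$; once Proposition \ref{PS} is established, the remaining steps are routine concentration-compactness technology.
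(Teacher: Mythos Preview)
Your approach is essentially the same as the paper's: both follow the standard Kenig--Merle scheme, defining a critical kinetic-energy threshold $H_c$, extracting a minimizing sequence, invoking Proposition~\ref{PS} to obtain a strong $\dot{\rm H}^1$ limit, and then using the long-time perturbation theorem (Proposition~\ref{pt}) to show the limit blows up in both directions and is almost periodic. The paper packages $H_c$ slightly differently, via the function $L(H) := \sup\{S_I(\b{\rm u}) : \sup_{t\in I} H(\b{\rm u}(t)) \le H\}$ and its continuity, but this is equivalent to your infimum formulation.

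There is one genuine slip. You write that for each $n$ you can choose $t_n \in I_n$ with both $S_{[t_n, T_+(\b{\rm u}_n))}(\b{\rm u}_n) = \infty$ and $S_{(T_-(\b{\rm u}_n), t_n]}(\b{\rm u}_n) = \infty$. If $\b{\rm u}_n$ blows up in only one time direction this is impossible: the scattering size on the other half-line is finite for every choice of $t_n$. Fortunately Proposition~\ref{PS} only requires that both half-sizes \emph{tend} to infinity with $n$, not that each be infinite. The paper arranges this by restricting to compact subintervals $I_n$ with $S_{I_n}(\b{\rm u}_n) \to \infty$ and choosing $t_n$ so that $S_{\ge t_n}(\b{\rm u}_n) = S_{\le t_n}(\b{\rm u}_n)$; alternatively, in your setup, you can take $t_n$ close enough to the blow-up endpoint that the finite half exceeds $n$. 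With this correction your proof goes through.
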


For the blow-up solutions $ \b{\rm u}_c $ to \eqref{NLS system} in Theorem \ref{minH}, we can classify them by their different properties of the frequency scale function $ \lambda(t) : I_c \to \R^+ $.
\begin{proposition}[Classification of critical solution, \cite{Killip2010, Killip2013}]\label{enemies}
If the scattering result of Theorem \ref{classification} fails, then there exists a minimal-kinetic-energy blowing-up solution $ \b{\rm u}_c : I_c \times \R^6 \to \C^2 $ which is almost periodic modulo symmetries with the maximal interval of existence $ I_c $ satisfying
\begin{equation}
S_{I_c} (\b{\rm u_c}) = \infty, \quad \text{and} \quad
\sup_{t \in I_c} H(\b{\rm u}_c(t)) < H(\b{\rm W}).
\label{u_c}
\end{equation}
Furthermore, $ \b{\rm u}_c $ has to be one of the following three cases:
\begin{enumerate}
\item a \emph{blowing-up solution in finite time} if
 \[
 \vert \inf I_c \vert < \infty, \quad \text{or} \quad \sup I_c < \infty;
 \]
\item a \emph{soliton-like solution} if
 \[
 I_c = \R, \quad \text{and} \quad \lambda(t) = 1, ~ \forall ~ t \in \R;
 \]
\item a \emph{low-to-high frequency cascade} if
 \[
 I_c = \R, \quad \text{and} \quad \inf_{t \in \R} \lambda(t) = 1, \quad \limsup_{t \to +\infty} \lambda(t) = \infty.
 \]
\end{enumerate}
\end{proposition}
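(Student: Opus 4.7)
The plan is to start from the minimal-kinetic-energy blowing-up, almost periodic critical solution $\b{\rm u}_c$ supplied by Theorem \ref{minH}, together with its frequency scale function $\lambda:I_c\to\R^+$, spatial center $x(t)$, and compactness modulus $C$, and to perform a standard scaling trichotomy on $\lambda$ in the spirit of the Killip--Visan classification \cite{Killip2010, Killip2013}, adapted to the system setting.

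First, if either $\sup I_c<\infty$ or $|\inf I_c|<\infty$, then $\b{\rm u}_c$ is already of type (1), so I may assume $I_c=\R$. My next task is then to show $\inf_{t\in\R}\lambda(t)>0$. Arguing by contradiction, I would pick $t_n\in\R$ with $\lambda(t_n)\to 0$ and use the scaling invariance of \eqref{NLS system} (which is $\dot{\rm H}^1$-isometric in dimension six) to define the rescaled sequence
$\b{\rm v}_n(t,x):=\lambda(t_n)^{-2}\b{\rm u}_c\bigl(t_n+\lambda(t_n)^{-2}t,\,x(t_n)+\lambda(t_n)^{-1}x\bigr).$
The almost periodicity of $\b{\rm u}_c$ together with Remark \ref{compactness in H^1} ensures that $\{\b{\rm v}_n(0)\}$ is precompact in $\dot{\rm H}^1(\R^6)$; passing to a subsequential limit $\b{\rm v}_\infty(0)$ and invoking the Palais--Smale Proposition \ref{PS} together with the stability theory for \eqref{NLS system} produces a new almost periodic, blowing-up solution whose maximal lifespan contains arbitrarily large time intervals and whose associated frequency scale at time zero is comparable to $1$. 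A careful book-keeping of the kinetic energy then contradicts the minimality statement built into Theorem \ref{minH}.

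Once $\inf_\R\lambda>0$ is established, I use the scaling symmetry once more to normalize $\inf_{t\in\R}\lambda(t)=1$, which preserves both almost periodicity and the blowing-up/minimality properties. A final dichotomy then handles the two remaining cases. If $\sup_{t\in\R}\lambda(t)<\infty$, then $\lambda$ lies in a compact subset of $\R^+$; replacing the compactness modulus function $C(\eta)$ by $(\sup_\R\lambda)\,C(\eta)$ permits a redefinition of the almost-periodicity data so that $\lambda\equiv 1$, yielding the soliton-like case (2). Otherwise $\sup_\R\lambda=\infty$, so I extract $t_n$ along which $\lambda(t_n)\to\infty$ and, if necessary, apply a time reversal (which is a symmetry of \eqref{NLS system}) to arrange $t_n\to+\infty$, giving the low-to-high frequency cascade case (3).

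The main obstacle is the second step: ruling out $\inf_\R\lambda=0$ for a global-in-time almost periodic solution. This is where the interplay among almost periodicity (providing $\dot{\rm H}^1$-precompactness of the rescaled orbit), the Palais--Smale condition (Proposition \ref{PS}), and nonlinear stability of the Duhamel flow for \eqref{NLS system} is essential: one must show that the rescaled subsequential limit not only exists in $\dot{\rm H}^1$ but also generates a global, almost periodic, blowing-up solution whose kinetic energy contradicts the extremality of $\b{\rm u}_c$. Once $\inf_\R\lambda>0$ is in hand, the remaining trichotomy on $\sup_\R\lambda$ is essentially a bookkeeping exercise with the scaling and time-reversal symmetries.
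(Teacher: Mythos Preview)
The paper does not give a self-contained proof of this proposition; it cites \cite{Killip2010, Killip2013} and, in the remark immediately following the statement, points to the standard Killip--Visan machinery based on the oscillation function ${\rm osc}(T)$ and the quantity $a(t_0)$.

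Your second step contains a genuine gap. You assert that if $I_c=\R$ and $\lambda(t_n)\to 0$, then rescaling at $t_n$ and passing to a limit produces a solution whose kinetic energy ``contradicts the minimality'' of $\b{\rm u}_c$. But the $\dot{\rm H}^1$-critical rescaling you write down is an isometry of ${\rm\dot H}^1(\R^6)$ and preserves $H$ exactly. The subsequential limit $\b{\rm v}_\infty$ therefore satisfies $\sup_t H(\b{\rm v}_\infty(t))=\sup_t H(\b{\rm u}_c(t))=H_c$, and you obtain not a contradiction but merely \emph{another} minimal-kinetic-energy almost periodic critical element---one with no a priori lower bound on its own frequency scale. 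Nothing in Theorem \ref{minH} is violated.

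This is precisely why the Killip--Visan argument does not try to rule out $\inf_{\R}\lambda=0$ by contradiction. Instead, one performs a case analysis on ${\rm osc}(T)$ and $a(t_0)$: bounded oscillation yields the soliton case after renormalizing $\lambda$; unbounded oscillation with $\inf_{t_0} a(t_0)>0$ yields the cascade (after a possible time reversal); and if $\inf_{t_0} a(t_0)=0$ one rescales at times where $a(t_n)\to 0$ to \emph{construct a new} critical element whose lifespan is only a half-line, which then falls under case (1) or case (3). The mechanism is to manufacture a better-behaved critical solution, not to contradict the existence of the original one. Your steps (1) and (3)--(5) are fine once $\inf_{\R}\lambda>0$ is in hand, but the argument you propose for that bound does not work as written.
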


After clarifying our three enemies, we need to rule out them according to their respective properties. We will prove that each critical solution has to be $ \b{\rm 0} $ and eliminate the possibility of their existence one by one. For the first enemy, blowing-up solution in finite time, we analysis its compactness and use the Virial identity to obtain $ \b{\rm u}_c \equiv \b{\rm 0} $ in Section \ref{nonradial}.

For the second and third enemies, by using the reduced Duhamel formulas(see Proposition \ref{reduced} below), we will show that they enjoy the following negative regularity in Subsection \ref{properties}.
\begin{theorem}[Negative regularity]\label{negative regularity}
Let $ \b{\rm u}_c $ be as in Proposition \ref{enemies} with $ I_c = \R $.
If
\[
\sup_{t \in \R} H(\b{\rm u}_c(t)) < \infty \quad \text{and} \quad \inf_{t \in \R} \lambda(t) \ge 1,
\]
then there exists $ \eps > 0 $ such that $ \b{\rm u}_c \in {\rm L}_t^\infty(\R, {\rm\dot H}^{-\eps}(\R^6)) $.
\end{theorem}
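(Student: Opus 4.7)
The plan is to implement the double Duhamel argument of Tao--Visan--Zhang, as adapted to the almost-periodic setting by Killip--Visan, for the coupled system under consideration. Since $\b{\rm u}_c$ is almost periodic modulo symmetries with $I_c = \R$ and $\inf_t \lambda(t) \ge 1$, the reduced Duhamel formulas promised by Proposition \ref{reduced} give, for each $t \in \R$,
\[
\b{\rm u}_c(t) \;=\; i \int_{-\infty}^{t} \verb"S"(t-\tau) \b{\rm f}(\b{\rm u}_c(\tau)) \, d\tau \;=\; -i \int_{t}^{+\infty} \verb"S"(t-\tau) \b{\rm f}(\b{\rm u}_c(\tau)) \, d\tau,
\]
interpreted as weak limits in $\dot{\b{\rm H}}^1$. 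First, I would pair the forward representation against the backward representation inside the $L^2$-inner product for a low-frequency projection $P_N \b{\rm u}_c(t)$ with $N \le 1$, recasting $\|P_N \b{\rm u}_c(t)\|_{L^2}^2$ as a double time integral supported on $\{\tau \le t \le s\}$. The composed kernel $P_N^2 \verb"S"(s-\tau)$ enjoys the sharp dispersive decay of order $|s-\tau|^{-3}$ in dimension six, which after low-frequency truncation is integrable away from the diagonal; this is the mechanism that converts uniform-in-time bounds on the nonlinearity into a gain at low frequencies.

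Second, I would establish a bilinear frequency-localized estimate for $P_N \b{\rm f}(\b{\rm u}_c)$. Because $\b{\rm f}(\b{\rm u}) = (v\bar u, u^2)^T$ is quadratic, an output at frequency $N \ll 1$ forces the two input factors to sit at comparable frequencies; decomposing each input via Littlewood--Paley, using Bernstein, and invoking the uniform control $\sup_t \|\b{\rm u}_c(t)\|_{\dot{\b{\rm H}}^1} \le H(\b{\rm W})^{1/2}$, I expect a bound of the schematic form
\[
\|P_N \b{\rm f}(\b{\rm u}_c(\tau))\|_{L^{\gamma}} \;\lesssim\; N^{a(s)}\; \sup_{t}\|\b{\rm u}_c(t)\|_{\dot{\b{\rm H}}^s}\;\sup_{t}\|\b{\rm u}_c(t)\|_{\dot{\b{\rm H}}^1}
\]
with $a(s) > 0$, provided $\b{\rm u}_c \in L_t^\infty \dot{\b{\rm H}}^s$ is already known for $s$ in a suitable range straddling the target index. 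Plugging this into the double Duhamel identity and summing against the dispersive kernel yields $\|P_N \b{\rm u}_c\|_{L_t^\infty L^2} \lesssim N^{s+\sigma}$ for a fixed gain $\sigma > 0$, i.e.\ membership in $\dot{\b{\rm H}}^{s-\sigma}$ after summing in dyadic $N$.

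Finally, a finite bootstrap closes the argument: starting from $s = 1$ (the energy bound), the previous step is iterated, each application lowering the regularity index by a uniform amount $\sigma > 0$, until the index drops strictly below zero, giving $\b{\rm u}_c \in L_t^\infty \dot{\b{\rm H}}^{-\eps}$ for some $\eps > 0$. The principal obstacle is the mismatch between the two propagators $e^{it\Delta}$ and $e^{\kappa i t \Delta}$ comprising $\verb"S"(t)$: the second-component double-Duhamel kernel carries $e^{\kappa i (s-\tau)\Delta}$ while the cross-interactions $v\bar u$ and $u^2$ couple the two evolutions, so the bilinear estimates must be tracked component by component. Since the dispersive and Bernstein estimates for the propagator are insensitive to the positive constant $\kappa$, each scalar analysis proceeds in parallel; the quadratic (rather than higher-power) structure of $\b{\rm f}$ keeps the multilinear bookkeeping tractable and should deliver a uniform gain $\sigma$ across the bootstrap range.
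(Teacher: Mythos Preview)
Your proposal captures the spirit of the Killip--Visan negative-regularity machinery, but it differs from the paper's proof in one structural point, and that point hides a genuine gap.

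The paper does \emph{not} launch the double Duhamel bootstrap directly from $s=1$. Instead it first proves a \emph{breach-of-scaling} estimate (Lemma~\ref{recurrence} and Proposition~\ref{ne}): using a \emph{single} reduced Duhamel formula split at time $N^{-2}$, the dispersive estimate converts $N^{-1/2}\|P_N\b{\rm u}_c\|_{L_t^\infty L_x^4}$ into $N^{1/2}\|P_N\b{\rm f}(\b{\rm u}_c)\|_{L_t^\infty L_x^{4/3}}$; a careful paraproduct decomposition of $\b{\rm f}(\b{\rm u}_c)$ into pieces $\mathcal{A},\mathcal{B},\mathcal{C},\mathcal{D}$ then yields a frequency recurrence for $A(N):=N^{-1/2}\|P_N\b{\rm u}_c\|_{L_t^\infty L_x^4}$, which is solved via the discrete Gronwall inequality (Lemma~\ref{Gronwall}) using the smallness of $\|\nabla P_{\le N_0}\b{\rm u}_c\|_{L_t^\infty L_x^2}$ coming from almost periodicity and $\lambda(t)\ge 1$. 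The output is $\b{\rm u}_c\in L_t^\infty L_x^p$ for $\tfrac{14}{5}\le p<3$, hence $\nabla\b{\rm f}(\b{\rm u}_c)\in L_t^\infty L_x^r$ for some $r<\tfrac{6}{5}$. Only \emph{after} this subcritical information is in hand does the paper invoke the double Duhamel mechanism (Proposition~\ref{n}, cited to \cite{Killip2010}) to produce a fixed gain $s_0>0$ in $\dot{\rm H}^s$, which is then iterated.

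Your plan skips this first phase and proposes to run the double Duhamel pairing starting from the scaling-critical input $\b{\rm u}_c\in L_t^\infty\dot{\rm H}^1$ alone. At that regularity the nonlinearity sits only in the critical space (e.g.\ $\nabla\b{\rm f}(\b{\rm u}_c)\in L_x^{6/5}$, $\b{\rm f}(\b{\rm u}_c)\in L_x^{3/2}$), and the double time integral against the dispersive kernel is borderline: you do not get $a(1)>0$ in your schematic bilinear bound, so the first bootstrap step does not close. The recurrence/Gronwall step is exactly the device that breaks this criticality; without it (or an equivalent replacement) your argument stalls at the outset.
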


It is easy to see that the momentum of any radial function is always zero.  For the non-radial case, with additional mass-resonance condition, the above fact can also be recovered. This  observation enables us to deduce $ P(\b{\rm u}_c) \equiv \b{\rm 0} $, if  $ \b{\rm u}_{c, 0} $ is non-radial with the mass-resonance condition or $ \b{\rm u}_{c, 0} $ is radial, which is shown  in Proposition \ref{P=0}. The zero momentum property helps us to obtain the control of spatial center function $ x(t) $ of the soliton-like solution in  Corollary \ref{x(t)}.

 By Theorem \ref{negative regularity} and Corollary \ref{x(t)}, we use the Virial identity to eliminate the possibility of soliton-like solution.
Finally, in order to exclude the last enemy, we divide its frequencies into low and high parts, and conquer them respectively. To be more precise, for the low frequency, we will use the negative regularity,  and for the high frequency, we will use  the property of low-to-high frequency cascade.

\subsection{Outline of the paper}
The following part of this paper is organized as follows: In Section \ref{PRE}, we clarify some preliminaries, including Bernstein inequalities, Strichartz estimate, local well-posedness, and long time perturbation theory of \eqref{NLS system}.
Section \ref{var} are divided into three parts.
We focus on ground state in Subsection \ref{ground}, show the energy trapping in Subsection \ref{et}, and give the proof of blowing-up result of Theorem \ref{classification} in Subsection \ref{idea}.
Section \ref{LPD} concentrates on the proof of Theorem \ref{minH} by the properties of the linear profile decomposition and the nonlinear profile.
To obtain the scattering result, we rule out our three enemies arisen in Proposition \ref{enemies} one by one in Section \ref{nonradial} and Section \ref{non}.
Combining with the result of blowing-up in finite time in Theorem \ref{vs} and Theorem \ref{bu}, we complete the proof of Theorem \ref{classification} finally.

\section{Preliminaries}\label{PRE}
We mark $ A \lesssim B $ to mean there exists a constant $ C > 0 $ such that $ A \leqslant C B $.
We indicate dependence on parameters via subscripts, e.g. $ A \lesssim_{x} B $ indicates $ A \leqslant CB $ for some $ C = C(x) > 0 $.
We use $ A \sim B $ to denote $ A \lesssim B \lesssim A $.

Let $ \phi(\xi) $ be a radial smooth function supported in the ball $ \{ \xi \in \R^6 : \vert \xi \vert \le \frac{11}{10} \} $ and equal to 1 on the ball $ \{ \xi \in \R^6 : \vert \xi \vert \le 1 \} $.
For each number $ N > 0 $, we define the Fourier multipliers
\begin{equation*}
\begin{aligned}
\widehat{P_{\le N} \b{\rm g}} (\xi) & ~ := \phi \left( \frac{\xi}{N} \right) \hat{\b{\rm g}}(\xi), \\
\widehat{P_{> N} \b{\rm g}} (\xi) & ~ := \left( 1 - \phi \left( \frac{\xi}{N} \right) \right) \hat{\b{\rm g}}(\xi), \\
\widehat{P_N \b{\rm g}} (\xi) & ~ := \left( \phi \left( \frac{\xi}{N} \right) - \phi \left( \frac{2\xi}{N} \right) \right) \hat{\b{\rm g}}(\xi),
\end{aligned}
\end{equation*}
and similarly $ P_{< N} $ and $ P_{\ge N} $. We also define
\[
P_{M < \cdot \le N} := P_{\le N} - P_{\le M} = \sum_{M < N^\prime \le N} P_{N^\prime}
\]
whenever $ M < N $.
We will usually use these multipliers when $ M $ and $ N $ are \emph{dyadic numbers} (that is, of the form $ 2^n $ for some integer $ n $);
in particular, all summations over $ N $ or $ M $ are understood to be over dyadic numbers.
Nevertheless, it will occasionally be convenient to allow $ M $ and $ N $ to not be a power of 2.
Like all Fourier multipliers, the Littlewood-Paley operators commute with the propagator $ \verb"S"(t) $, as well as with differential operators such as $ i \partial_t + A $.

\begin{lemma}[Bernstein inequalities]\label{Bern}
For any $ 1 \le p \le q \le \infty $ and $ s \ge 0 $,
\begin{equation*}
\begin{aligned}
\Vert P_{\ge N} \b{\rm g} \Vert_{{\rm L}_x^p(\R^6)} \lesssim_s & ~ N^{-s} \Vert \vert \nabla \vert^s P_{\ge N} \b{\rm g} \Vert_{{\rm L}_x^p(\R^6)} \lesssim_s N^{-s} \Vert \Vert \nabla \vert^s \b{\rm g} \Vert_{{\rm L}_x^p(\R^6)}, \\
\Vert \vert \nabla \vert^s P_{\le N} \b{\rm g} \Vert_{{\rm L}_x^p(\R^6)} \lesssim_s & ~ N^s \Vert P_{\le N} \b{\rm g} \Vert_{{\rm L}_x^p(\R^6)} \lesssim_s N^s \Vert \b{\rm g} \Vert_{{\rm L}_x^p(\R^6)}, \\
\Vert \vert \nabla \vert^{\pm s} P_N \b{\rm g} \Vert_{{\rm L}_x^p(\R^6)} \lesssim_s & ~ N^{\pm s} \Vert P_N \b{\rm g} \Vert_{{\rm L}_x^p(\R^6)} \lesssim_s N^{\pm s} \Vert \b{\rm g} \Vert_{{\rm L}_x^p(\R^6)} ,\\
\Vert P_N \b{\rm g} \Vert_{{\rm L}_x^q(\R^6)} \lesssim & ~ N^{\frac{6}{p} - \frac{6}{q}} \Vert P_N \b{\rm g} \Vert_{{\rm L}_x^p(\R^6)},
\end{aligned}
\end{equation*}
where $ \vert \nabla \vert^s $ is the classical fractional-order operator.
\end{lemma}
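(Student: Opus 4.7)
The plan is to reduce every inequality to its classical scalar counterpart and then invoke the standard Littlewood-Paley kernel estimates. Since each of the projectors $P_N$, $P_{\le N}$, $P_{\ge N}$ is defined by a Fourier multiplier and therefore acts diagonally on the two components of $\b{\rm g} = (g_1, g_2)^T$, and since $\Vert \b{\rm g}\Vert_{{\rm L}^p}$ is equivalent to $\Vert g_1\Vert_{L^p} + \Vert g_2\Vert_{L^p}$, it is enough to prove each inequality for scalar-valued $g \in \mathcal{S}(\R^6)$ (and then extend by density).

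For the final (Lebesgue embedding) bound, I would write $P_N g = K_N * g$, where $K_N(x) = N^6 \check{\psi}(Nx)$ and $\psi(\xi) = \phi(\xi) - \phi(2\xi)$. Since $\psi \in C_c^\infty$ supported in the annulus $\{1/2 \le |\xi| \le 11/10\}$, its inverse Fourier transform $\check{\psi}$ is Schwartz, so a scaling computation gives $\Vert K_N\Vert_{L^r} \sim_r N^{6(1 - 1/r)}$. Young's convolution inequality with the Hölder relation $1 + \tfrac1q = \tfrac1r + \tfrac1p$ then yields the factor $N^{6/p - 6/q}$ as claimed.

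For the fractional-derivative estimates on $P_N$, I would note that the multiplier of $|\nabla|^{\pm s} P_N$ is
\[
|\xi|^{\pm s}\bigl(\phi(\xi/N) - \phi(2\xi/N)\bigr) = N^{\pm s}\, m_{\pm}(\xi/N), \qquad m_{\pm}(\eta) := |\eta|^{\pm s}\bigl(\phi(\eta) - \phi(2\eta)\bigr).
\]
Because $m_{\pm}$ is smooth and compactly supported away from the origin, $\check{m}_{\pm} \in \mathcal{S}(\R^6)$, and $|\nabla|^{\pm s} P_N$ is convolution with $N^{\pm s}\,(\check{m}_{\pm})_N$ where the rescaled kernel has $L^1$ norm independent of $N$. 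Young's inequality then gives $\Vert |\nabla|^{\pm s} P_N g\Vert_{L^p} \lesssim N^{\pm s}\Vert g\Vert_{L^p}$, and the converse $\Vert P_N g\Vert_{L^p} \lesssim N^{\mp s}\Vert |\nabla|^{\pm s} P_N g\Vert_{L^p}$ follows by the same argument applied to the inverse multiplier.

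For $P_{\le N}$ I would either repeat the kernel argument with multiplier $|\xi|^s \phi(\xi/N)$ (verifying that the rescaled Schwartz kernel has bounded $L^1$ norm after summing a Taylor-style expansion near the origin when $s$ is non-integer), or dyadically decompose $|\nabla|^s P_{\le N} g = \sum_{M \le N}|\nabla|^s P_M g + |\nabla|^s P_{\le 1}g$ and sum the geometric series $\sum_{M \le N} M^s \lesssim N^s$. For $P_{\ge N}$, the dyadic version $P_{\ge N} g = \sum_{M \ge N} P_M g$ followed by the $P_M$ bound with an extra $M^{-s}$ factor, summed via $\sum_{M \ge N} M^{-s} \lesssim N^{-s}$, gives the first inequality; the second follows since $|\nabla|^s P_{\ge N}$ is an $L^p$-bounded Fourier multiplier perturbation of $|\nabla|^s$. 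There is no genuine obstacle here; the only care needed is the bookkeeping of scaling under $K_N(x) = N^6 K(Nx)$ and checking that each auxiliary multiplier $m_{\pm}(\eta)$ is admissible on its relevant support.
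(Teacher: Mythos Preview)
The paper states this lemma without proof, treating it as a standard fact from Littlewood--Paley theory; your proposal supplies exactly the classical argument (componentwise reduction, rescaled Schwartz kernels, Young's inequality, and dyadic summation) that one finds in any reference, so there is nothing to compare. One minor point: in your $P_{\ge N}$ step, the triangle-inequality sum $\sum_{M\ge N} M^{-s}\||\nabla|^s P_M g\|_{L^p}$ does not immediately collapse to $N^{-s}\||\nabla|^s P_{\ge N} g\|_{L^p}$; you should insert a fattened projector $\tilde P_M$ so that $|\nabla|^s P_M g = P_M(|\nabla|^s \tilde P_{\ge N} g)$ and then use uniform $L^p$-boundedness of $P_M$, or else argue directly via the Mikhlin--H\"ormander bound on the multiplier $(|\xi|/N)^{-s}(1-\phi(\xi/N))$ for $1<p<\infty$ and a separate kernel estimate at the endpoints.
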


\begin{lemma}[Gronwall-type inequality, \cite{Killip2010}]\label{Gronwall}
  Fix $\gamma>0$. Given $0<\eta<(1-2^{-\gamma})/2$ and $\{b_k\}\in l^\infty(\mathbb{N})$, let $\{x_k\}\in l^\infty(\mathbb{N})$ be a non-negative sequence obeying
  \begin{align}
    x_k\leq b_k+\eta \sum_{l=0}^{\infty}2^{-\gamma|k-l|}x_l\  \ \ \ \ \ for\ \ all\ \ k\geq0.
  \end{align}
  Then
  \begin{align}
    x_k\lesssim \sum_{l=0}^{\infty}r^{|k-l|}b_l
  \end{align}
  for some $r=r(\eta)\in(2^{-\gamma},1)$. Moreover, $r\downarrow 2^{-\gamma}$ as $\eta\downarrow0.$
\end{lemma}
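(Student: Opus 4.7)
The plan is to rewrite the hypothesis as an operator inequality $x \leq b + Tx$ on $\ell^\infty(\mathbb{N})$, where $T$ is the positive linear operator with kernel $\eta\,2^{-\gamma|k-l|}$. Since the row sums satisfy $\sum_{l\geq 0} 2^{-\gamma|k-l|} \leq (1+2^{-\gamma})/(1-2^{-\gamma})$, the hypothesis $\eta < (1-2^{-\gamma})/2$ gives $\|T\|_{\ell^\infty\to\ell^\infty} \leq (1+2^{-\gamma})/2 < 1$. Iterating the inequality $N$ times, using monotonicity of $T$, yields $x_k \leq \sum_{n=0}^{N-1}(T^n b)_k + (T^N x)_k$, and $\|T^N x\|_\infty \to 0$ as $N \to \infty$ since $x \in \ell^\infty$. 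This reduces the problem to estimating the Neumann series $\sum_{n \geq 0} T^n b$ pointwise by a geometric envelope.

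The next step is to compare this series with a putative solution of the desired form. For a parameter $r \in (2^{-\gamma},1)$ to be chosen later, set $y_k := \sum_{l \geq 0} r^{|k-l|} b_l$, and prove the kernel bound
\[
\sum_{l \geq 0} 2^{-\gamma|k-l|}\, r^{|l-m|} \leq C(r,\gamma)\, r^{|k-m|}, \qquad C(r,\gamma) := \frac{2}{1-2^{-\gamma}r} + \frac{1}{1-2^{-\gamma}/r},
\]
by splitting $l$ into the three regions $l \leq \min(k,m)$, $\min(k,m)<l<\max(k,m)$, and $l \geq \max(k,m)$ and summing three geometric progressions in each. Crucially, $C(r,\gamma)$ is finite precisely because $r > 2^{-\gamma}$, and $C(r,\gamma) \to \infty$ as $r \downarrow 2^{-\gamma}$.

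Combining this with the pointwise estimate $2^{-\gamma|k-l|} \leq r^{|k-l|}$ (valid whenever $r \geq 2^{-\gamma}$), a first application of $T$ to $b$ gives $(Tb)_k \leq \eta\, y_k$, and then an induction based on the kernel bound applied to $y$ yields $(T^n b)_k \leq \eta\,(\eta\,C(r,\gamma))^{n-1}\, y_k$ for every $n \geq 1$. Summing over $n$ and using the trivial inequality $b_k \leq y_k$ produces
\[
x_k \leq \Bigl(1 + \frac{\eta}{1-\eta\,C(r,\gamma)}\Bigr) \sum_{l=0}^{\infty} r^{|k-l|} b_l,
\]
valid as soon as $r$ is chosen so that $\eta\,C(r,\gamma) < 1$. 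To obtain the claimed behavior $r(\eta) \downarrow 2^{-\gamma}$ as $\eta \downarrow 0$, a concrete prescription such as $r := 2^{-\gamma}/(1-\sqrt{\eta})$ makes $C(r,\gamma) = O(\eta^{-1/2})$, so $\eta\,C(r,\gamma) = O(\sqrt{\eta}) \to 0$, while manifestly $r \to 2^{-\gamma}$.

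I expect the main obstacle to be the tension in this last step: the envelope constant $C(r,\gamma)$ blows up as $r$ approaches the natural decay rate $2^{-\gamma}$, so the choice of $r$ has to be tied carefully to $\eta$. The strict gap in the hypothesis $\eta < (1-2^{-\gamma})/2$ is exactly what supplies the room to enforce both Neumann convergence of $T^n$ and the coupled choice $r(\eta) \downarrow 2^{-\gamma}$. The three-region splitting that gives $C(r,\gamma)$ is elementary but is the one piece of bookkeeping that must be executed carefully, since a naive triangle-inequality estimate produces an extra factor of $|k-m|$ that would destroy the geometric decay.
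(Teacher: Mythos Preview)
The paper does not prove this lemma; it is quoted from Killip--Visan \cite{Killip2010}, so there is no in-paper proof to compare against. Your Neumann-series strategy is correct in outline and does establish the conclusion for all sufficiently small $\eta$, together with the asymptotic $r(\eta)\downarrow 2^{-\gamma}$. Since the only application in the paper (the proof of Proposition~\ref{ne}) takes $\eta$ small, your argument already suffices for the purposes of this paper.

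If you want the lemma exactly as stated, however, there is a gap. Your argument needs some $r\in(2^{-\gamma},1)$ with $\eta\,C(r,\gamma)<1$, but minimizing the (sharper form of the) envelope constant over $r$ gives
\[
\min_{r\in(2^{-\gamma},1)}\Bigl(\frac{2}{1-2^{-\gamma}r}+\frac{2^{-\gamma}/r}{1-2^{-\gamma}/r}\Bigr)=\frac{(\sqrt2+2^{-\gamma})^2}{1-2^{-2\gamma}}>\frac{2}{1-2^{-\gamma}},
\]
so for $\eta$ close to the endpoint $(1-2^{-\gamma})/2$ no admissible $r$ exists; your explicit prescription $r=2^{-\gamma}/(1-\sqrt\eta)$ likewise requires $\eta<(1-2^{-\gamma})^2$, which can be strictly smaller than $(1-2^{-\gamma})/2$. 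The loss comes from bounding each power $T^n$ separately through the inequality $\sum_l 2^{-\gamma|k-l|}r^{|l-m|}\le C(r,\gamma)\,r^{|k-m|}$: the factors of $C$ accumulate geometrically. The route in \cite{Killip2010} avoids this by computing the kernel of $(I-T)^{-1}$ in one step: after extending to $\mathbb Z$ and passing to the $z$-transform, the resolvent kernel is an explicit constant multiple of $r^{|k-l|}$, where $r\in(2^{-\gamma},1)$ is the smaller root of $r+r^{-1}=2^{\gamma}+2^{-\gamma}-\eta\,(2^{\gamma}-2^{-\gamma})$. This $r$ exists throughout the stated range of $\eta$ (indeed for all $\eta<\tfrac{1-2^{-\gamma}}{1+2^{-\gamma}}$) and manifestly tends to $2^{-\gamma}$ as $\eta\downarrow 0$. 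Your three-region kernel bound is the right computation; the fix is simply to apply it once to the resolvent rather than iteratively to the powers of $T$.
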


\subsection{Strichartz estimate}\label{se}
 Let $\Lambda_s$ be the set of pairs $(p,q)$ with $q \geqslant 2$ and satisfying
\begin{equation}
\frac{2}{q} = 6 \left( \frac{1}{2} - \frac{1}{r} \right) - s.
\label{Lambda}
\end{equation}
Define
\begin{equation*}
\Vert \b{\rm u} \Vert_{\mathcal{S}({\rm\dot H}^s(\mathbb{R}^6))}:= \sup_{(q, r) \in \Lambda_{s}} \Vert \b{\rm u} \Vert_{{\rm L}_t^q(\mathbb{R}, {\rm L}_x^r(\mathbb{R}^6))},
\end{equation*}
and
\begin{equation*}
\Vert \b{\rm u} \Vert_{\mathcal{S}^\prime({\rm\dot H}^s(\mathbb{R}^6))}:= \inf_{(q, r) \in \Lambda_{s}} \Vert \b{\rm u} \Vert_{{\rm L}_t^{q^\prime}(\mathbb{R}, {\rm L}_x^{r^\prime}(\mathbb{R}^6))}.
\end{equation*}
We extent our notation $ \mathcal{S}({\rm\dot H}^s(\mathbb{R}^6)), \mathcal{S}^\prime({\rm\dot H}^s(\mathbb{R}^6)) $ as follows: we write $ \mathcal{S}(I, {\rm\dot H}^s(\mathbb{R}^6)) $ or $ \mathcal{S}^\prime(I, {\rm\dot H}^s(\mathbb{R}^6)) $ to indicate a restriction to a time subinterval $ I \subset \mathbb{R} $.

\begin{lemma}[Dispersive estimate]\label{disper}
For any $ t \ne 0 $ and $ r \ge 2 $, we have
\begin{equation*}
\begin{aligned}
\Vert \verb"S"(t) \b{\rm g} \Vert_{{\rm L}_x^\infty(\R^6)} \lesssim_\kappa & ~  \vert t \vert^{-3} \Vert \b{\rm g} \Vert_{{\rm L}^1(\R^6)}, \\
\Vert \verb"S"(t) \b{\rm g} \Vert_{{\rm L}_x^r(\R^6)} \lesssim_\kappa & ~  \vert t \vert^{\frac{6}{r} - 3} \Vert \b{\rm g} \Vert_{{\rm L}^{r^\prime}(\R^6)},
\end{aligned}
\end{equation*}
where $ r^\prime $ is the H\"older conjugation index, that is, $ \frac{1}{r} + \frac{1}{r^\prime} = 1 $.
\end{lemma}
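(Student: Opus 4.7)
The plan is to reduce this to the standard scalar dispersive estimates for the free Schrödinger group on $\R^6$ applied componentwise, using that $\verb"S"(t)$ is diagonal. Writing $\b{\rm g} = (g_1, g_2)^T$, we have $\verb"S"(t) \b{\rm g} = (e^{it\Delta} g_1, \, e^{\kappa i t \Delta} g_2)^T$, so it suffices to control each component separately and sum.

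For the first component I would invoke the classical $L^1 \to L^\infty$ bound
\[
\Vert e^{it\Delta} f \Vert_{L^\infty(\R^6)} \lesssim |t|^{-3} \Vert f \Vert_{L^1(\R^6)},
\]
which follows by writing the kernel of $e^{it\Delta}$ explicitly via the Fourier transform as a Gaussian oscillatory integral. For the second component, the operator $e^{\kappa i t \Delta}$ is just the free Schrödinger propagator at time $\kappa t$, so the same estimate yields
\[
\Vert e^{\kappa i t \Delta} f \Vert_{L^\infty(\R^6)} \lesssim |\kappa t|^{-3} \Vert f \Vert_{L^1(\R^6)} = \kappa^{-3} |t|^{-3} \Vert f \Vert_{L^1(\R^6)},
\]
and the $\kappa^{-3}$ factor is absorbed into the implicit $\kappa$-dependent constant. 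Combining the two components gives the first displayed inequality.

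For the general $L^{r'} \to L^r$ bound with $r \geq 2$, I would interpolate between the $L^1 \to L^\infty$ estimate just established and the $L^2$ isometry $\Vert e^{it\Delta} f \Vert_{L^2} = \Vert f \Vert_{L^2}$ (respectively $\Vert e^{\kappa i t \Delta} f \Vert_{L^2} = \Vert f \Vert_{L^2}$), both of which hold on each component of $\verb"S"(t)$. Applying Riesz--Thorin with $\theta = 2/r'$ produces the time decay exponent $-3 \cdot (1 - 2/r) = 6/r - 3$ and the dual Lebesgue exponent $r'$, yielding the desired inequality componentwise. Summing the two components gives the stated estimate.

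There is really no substantial obstacle here: the lemma is a direct reduction to scalar dispersive estimates through diagonality of $\verb"S"(t)$, with the only subtlety being tracking the $\kappa$ dependence (which only enters through rescaling of time in the second component and is therefore trivially harmless provided we allow the implied constant to depend on $\kappa$).
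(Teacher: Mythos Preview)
Your proposal is correct and is exactly the standard argument: diagonality of $\verb"S"(t)$ reduces to the scalar estimates for $e^{it\Delta}$ and $e^{\kappa it\Delta}$, the latter being the former at rescaled time, and Riesz--Thorin interpolation between the $L^1\to L^\infty$ bound and the $L^2$ isometry gives the general $L^{r'}\to L^r$ estimate. The paper itself states Lemma~\ref{disper} without proof, treating it as standard, so there is nothing further to compare; your write-up supplies precisely the routine verification one would expect. (One small slip: the interpolation parameter should be $\theta=2/r$ rather than $\theta=2/r'$ if $\theta$ denotes the weight on the $L^2$ endpoint, but your stated decay exponent $6/r-3$ and dual exponent $r'$ are correct regardless.)
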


By combing the endpoint results in \cite{Keel1998} with Lemma \ref{disper}, we can use $ T-T^\star $ method to obtain the following Strichartz estimates.

\begin{theorem}[Strichartz estimates, \cite{Meng2021}]\label{strichartz}
The solution $ \b{\rm u} $ to \eqref{NLS system} on an interval $ I \ni t_{0} $ obeys
\begin{equation}
\Vert \b{\rm u} \Vert_{\mathcal{S}(I, {\rm L}^2(\mathbb{R}^6))} \le C \left( \Vert \b{\rm u}(t_{0}) \Vert_{{\rm L}^2(\mathbb{R}^6)} + \Vert \b{\rm f}(\b{\rm u}) \Vert_{\mathcal{S}^\prime(I, {\rm L}^2(\mathbb{R}^6))} \right).
\label{Strichartz}
\end{equation}
\end{theorem}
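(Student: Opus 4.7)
The plan is to reduce the vector-valued estimate \eqref{Strichartz} to the standard scalar Strichartz estimates for the free Schr\"odinger propagator in $\R^6$, applied component by component. Since the matrix $A$ is diagonal, the propagator $\verb"S"(t)$ decouples into $e^{it\Delta}$ (acting on the first slot) and $e^{i\kappa t\Delta}$ (acting on the second), so writing $\b{\rm u}=(u,v)^T$ the Duhamel formula splits into two independent scalar equations whose forcing terms are the components of $\b{\rm f}(\b{\rm u})$. It therefore suffices to prove, for each scalar propagator, both the homogeneous bound $\|e^{it\Delta}g\|_{L^q_tL^r_x}\lesssim\|g\|_{L^2_x}$ and the inhomogeneous bound $\|\int_{t_0}^t e^{i(t-\tau)\Delta}F(\tau)\,d\tau\|_{L^q_tL^r_x}\lesssim\|F\|_{L^{\tilde q'}_tL^{\tilde r'}_x}$ for every admissible $(q,r),(\tilde q,\tilde r)\in\Lambda_0$, and analogously with $e^{it\Delta}$ replaced by $e^{i\kappa t\Delta}$.

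First, I would observe that the dispersive estimate of Lemma \ref{disper} holds for each scalar propagator: the stationary phase formula gives $\|e^{it\Delta}\|_{L^1\to L^\infty}\lesssim |t|^{-3}$ in dimension six, and the change of variables $t\mapsto \kappa t$ (or equivalently rescaling the kernel by $\kappa^{-3}$) yields the same decay for $e^{i\kappa t\Delta}$, with a constant depending on $\kappa\in(0,\infty)$. Interpolating with the trivial $L^2$ bound gives the full range $\|e^{it\Delta}g\|_{L^r_x}\lesssim |t|^{6/r-3}\|g\|_{L^{r'}_x}$ for $r\ge 2$.

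Next, I would feed this dispersive bound into the abstract $TT^\ast$ machinery of Keel--Tao \cite{Keel1998}. Their abstract theorem precisely converts a pointwise-in-$t$ dispersive estimate of the form $\|U(t)U(s)^\ast\|_{L^1\to L^\infty}\lesssim |t-s|^{-\sigma}$ (here $\sigma=3$) into the homogeneous and inhomogeneous Strichartz inequalities for all pairs admissible at the sharp exponent level, which in our notation is exactly the class $\Lambda_0$ obtained by setting $s=0$ in \eqref{Lambda}. The endpoint pair $(q,r)=(2,3)$ arising in $d=6$ is included since the Keel--Tao argument covers it in dimension $\ge 3$. One then takes the supremum over $(q,r)\in\Lambda_0$ on the left-hand side and the infimum over dual pairs on the right-hand side, producing the scalar versions of \eqref{Strichartz} for both $e^{it\Delta}$ and $e^{i\kappa t\Delta}$.

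Finally, adding the two scalar inequalities for the components $u$ and $v$, and using that the vector nonlinearity satisfies $\|\b{\rm f}(\b{\rm u})\|_{\mathcal{S}'(I,{\rm L}^2)}$ dominates (up to a fixed constant) each of $\|v\bar u\|_{\mathcal{S}'(I,L^2)}$ and $\|u^2\|_{\mathcal{S}'(I,L^2)}$ by definition of the product $\mathcal{S}'$-norm, yields the claimed vector estimate. The only mild subtlety is book-keeping the $\kappa$-dependence of all constants, which is harmless since $\kappa$ is a fixed parameter of \eqref{NLS system}; apart from that, the proof is a direct application of the endpoint Strichartz theorem and involves no genuinely new ingredient beyond the diagonal structure of $A$.
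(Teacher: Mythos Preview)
Your proposal is correct and follows essentially the same route as the paper: the paper does not write out a proof but simply records that the estimate follows by combining the dispersive bound of Lemma~\ref{disper} with the Keel--Tao endpoint result via the $TT^\ast$ method, which is precisely the argument you have spelled out component by component.
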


\subsection{Local well-posedness}\label{lwp}
 Standard argument can help us prove the following basic local well-posedness properties of the solution to \eqref{NLS system}.
\begin{theorem}[Local well-posedness, \cite{Cazenave2003, Miao2014}]\label{sd}
For $ \b{\rm u}_0 \in {\rm\dot H}^1 (\mathbb{R}^6) $, there exists a unique maximal-lifespan solution $ \b{\rm u} : I \times \R^6 \to \C^2 $ with the following properties holding
\begin{enumerate}
\item $ 0 \in I $ is an open interval.
\item If $ \sup I $ is finite, then $ \b{\rm u} $ blows-up forward in time. Similarly, if $ \inf I $ is finite, then $ \b{\rm u} $ blows-up backward in time.
\item If $ \sup I = +\infty $ and $ \b{\rm u} $ does not blow-up forward in time, then $ \b{\rm u} $ scatters forward in time.
Conversely, given $ \b{\rm u}_+ \in {\rm\dot H}^1(\R^6) $, there is a unique solution $ \b{\rm u}(t) $ to \eqref{NLS system} in a neighborhood of $ t = \infty $ such that
\begin{equation*}
\lim_{t \to +\infty} \left\Vert \b{\rm u}(t) - \verb"S"(t) \b{\rm u}_+ \right\Vert_{{\rm\dot H}^1(\mathbb{R}^6)} = 0.
\end{equation*}
Analogous statements hold backward in time.
\item There exists a small number $ \delta_{sd} > 0 $ satisfying that
if $ \| \b{\rm u}_0 \|_{\mathcal{S}(I, {\rm\dot H}^1(\mathbb{R}^6))} \le \delta_{sd} $, then $ \b{\rm u} $ is global and scatters with $ S_{\R}(\b{\rm u}) \lesssim \delta_{sd}^4$.
\end{enumerate}
\end{theorem}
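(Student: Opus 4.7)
The plan is to derive all four assertions from a single Picard iteration applied to the Duhamel map
\[
\Phi(\mathbf{u})(t):=S(t)\mathbf{u}_0+i\int_0^t S(t-\tau)\mathbf{f}(\mathbf{u}(\tau))\,d\tau
\]
in a scaling-critical Strichartz space. Noting that $(4,4)\in\Lambda_1$ while $(2,3)$ and $(4,12/5)$ both lie in $\Lambda_0$, I would work with the norm
\[
\|\mathbf{u}\|_{X(I)}:=\|\mathbf{u}\|_{L_t^\infty \dot{\mathrm H}_x^1(I)}+\|\mathbf{u}\|_{L_{t,x}^4(I\times\R^6)}+\|\nabla\mathbf{u}\|_{L_t^4L_x^{12/5}(I)}.
\]
The only analytic input needed is the scaling-critical bilinear bound
\[
\|\nabla\mathbf{f}(\mathbf{u})\|_{L_t^2L_x^{3/2}(I\times\R^6)}\lesssim\|\mathbf{u}\|_{L_{t,x}^4(I)}\|\nabla\mathbf{u}\|_{L_t^4L_x^{12/5}(I)},
\]
which follows from the pointwise estimate $|\nabla\mathbf{f}(\mathbf{u})|\lesssim|\mathbf{u}||\nabla\mathbf{u}|$ and H\"older with $\tfrac{2}{3}=\tfrac14+\tfrac{5}{12}$ in space and $\tfrac12=\tfrac14+\tfrac14$ in time; the analogous difference estimate is free from bilinearity of $\mathbf{f}$. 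Substituted into Theorem \ref{strichartz} (applied to $\nabla\mathbf{u}$), this yields $\|\Phi(\mathbf{u})\|_{X(I)}\le C\|\mathbf{u}_0\|_{\dot{\mathrm H}^1}+C\|\mathbf{u}\|_{L_{t,x}^4(I)}\|\mathbf{u}\|_{X(I)}$, so $\Phi$ is a contraction on a small ball of $X(I)$ whenever $\|S(t)\mathbf{u}_0\|_{L_{t,x}^4(I)}$ is sufficiently small.

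From this fixed-point package the four assertions follow by standard reasoning. For (1), the required smallness of $\|S(t)\mathbf{u}_0\|_{L_{t,x}^4(I)}$ can always be arranged on a short enough interval $I\ni 0$ by dominated convergence, and gluing produces the open maximal interval $I_{\max}$; uniqueness in $L_{t,x}^4$ comes from the difference estimate. For (2), if $\sup I_{\max}<\infty$ while $S_{[t_1,\sup I_{\max})}(\mathbf{u})<\infty$ for some $t_1$, Strichartz would force $\|S(t-T)\mathbf{u}(T)\|_{L_{t,x}^4}$ to be small for $T$ sufficiently close to $\sup I_{\max}$, allowing continuation past $\sup I_{\max}$ and contradicting maximality. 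For (3), given $S_{[0,\infty)}(\mathbf{u})<\infty$ I set $\mathbf{u}_+:=\mathbf{u}(0)+i\int_0^\infty S(-\tau)\mathbf{f}(\mathbf{u}(\tau))\,d\tau$ and apply the bilinear estimate to conclude the integral converges in $\dot{\mathrm H}^1$ and $S(-t)\mathbf{u}(t)\to\mathbf{u}_+$; conversely, given $\mathbf{u}_+\in\dot{\mathrm H}^1$, since $\|S(t)\mathbf{u}_+\|_{L_{t,x}^4([T,\infty))}\to 0$ as $T\to\infty$, the same contraction applied to $\Phi_\infty(\mathbf{u})(t):=S(t)\mathbf{u}_+-i\int_t^\infty S(t-\tau)\mathbf{f}(\mathbf{u}(\tau))\,d\tau$ on $[T,\infty)$ yields the wave operator, subsequently extended back to a neighborhood of $t=+\infty$ via the local theory. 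Part (4) is the $I=\R$ version: Strichartz gives $\|S(t)\mathbf{u}_0\|_{L_{t,x}^4(\R)}\le C\|\mathbf{u}_0\|_{\dot{\mathrm H}^1}$, so taking $\delta_{sd}$ small enough that $2C^2\delta_{sd}<1$ closes the bootstrap and yields $S_\R(\mathbf{u})\lesssim\delta_{sd}^4$.

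The only point requiring care is that the nonlinearity couples the two components: $\mathbf{f}(\mathbf{u})=(v\bar u,u^2)^T$ mixes $u$ and $v$. Once the H\"older exponents are balanced, however, the estimate applies componentwise and the cross-term $v\bar u$ is no harder than the diagonal term $u^2$; both absorb into the single bilinear bound above. Beyond this algebraic check the proof is the standard Strichartz-plus-Banach-fixed-point scheme of \cite{Cazenave2003, Miao2014}, and is in fact cleaner than the scalar energy-critical NLS because the polynomial (quadratic) nonlinearity requires no fractional chain rule.
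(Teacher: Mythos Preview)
Your proposal is correct and is precisely the ``standard argument'' the paper alludes to without writing out; indeed, the paper does not supply a proof of this theorem at all, merely citing \cite{Cazenave2003, Miao2014}. Your choice of Strichartz exponents---$L_{t,x}^4$ for the solution, $L_t^4L_x^{12/5}$ for $\nabla\mathbf{u}$, and $L_t^2L_x^{3/2}$ for $\nabla\mathbf{f}(\mathbf{u})$---is exactly the one the paper adopts in its proof of Proposition~\ref{pt} (long-time perturbation theory), so your argument is fully consistent with the paper's framework.
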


\begin{remark}\label{esd}
The last result of Theorem \ref{lwp} can be improved in the following manner: instead of using $ \| \b{\rm u}_0 \|_{\mathcal{S}(I, {\rm\dot H}^1(\mathbb{R}^6))} \le \delta_{sd} $,
we may use a weaker hypothesis $ \left\Vert \verb"S"(t) \b{\rm u}_0 \right\Vert_{\mathcal{S}(I, {\dot{\rm H}}^1(\mathbb{R}^6))} \le \delta_{sd} $  to obtain the same result.
And for any given $ \b{\rm u}_0 \in {\dot{\rm H}}^1(\mathbb{R}^6) $, one can utilize Strichartz estimate to deduce that  there exists $ I \ni 0 $ such that $ \left\Vert \verb"S"(t) \b{\rm u}_0 \right\Vert_{\mathcal{S}(I, {\dot{\rm H}}^1(\mathbb{R}^6))} \le \delta_{sd}$.
\end{remark}

Inspired by \cite{Tao2005}, we can establish the following stability result under weak smallness conditions.
\begin{proposition}[Long time perturbation theory]\label{pt}
For each $ M \gg 1 $, there exist $ \varepsilon = \varepsilon (M) \ll 1 $ and $ L = L (M) \gg 1 $such that the following holds.
Let $ \b{\rm u} = \b{\rm u}(t, x) \in {\rm\dot H}_x^1(\mathbb{R}^6) $ for all $ t $ and solve
\begin{equation*}
i \b{\rm u}_t + A \b{\rm u} + \b{\rm f}(\b{\rm u}) = \b{\rm 0}.
\end{equation*}
Let $ \tilde{\b{\rm u}} = \tilde{\b{\rm u}}(t, x) \in {\rm\dot H}_x^1(\mathbb{R}^6) $ for all $ t $ and define
\begin{equation*}
\b{\rm e} := i \tilde{\b{\rm u}}_t + A \tilde{\b{\rm u}} + \b{\rm f}(\tilde{\b{\rm u}}).
\end{equation*}
If
\begin{equation*}
\begin{aligned}
& \Vert \tilde{\b{\rm u}} \Vert_{\mathcal{S}({\rm\dot H}^1(\mathbb{R}^6))} \le M, \qquad
\Vert \nabla \b{\rm e} \Vert_{\mathcal{S}^\prime({\rm L}^2(\mathbb{R}^6))} \le \varepsilon, \\
& \Vert \verb"S"(t-t_0) ( \b{\rm u}_0 - \tilde{\b{\rm u}}(t_0) ) \Vert_{\mathcal{S}({\rm\dot H}^1(\mathbb{R}^6))} \le \varepsilon,
\end{aligned}
\end{equation*}
then
\begin{equation*}
\Vert \b{\rm u} \Vert_{\mathcal{S}({\rm\dot H}^1(\mathbb{R}^6))} \le L.
\end{equation*}
\end{proposition}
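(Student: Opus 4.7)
The plan is to follow the standard Tao--Visan bootstrap strategy adapted to the quadratic system: partition the time interval into finitely many subintervals on which the Strichartz norm of $\tilde{\b{\rm u}}$ is small, propagate smallness of the difference $\b{\rm w}:=\b{\rm u}-\tilde{\b{\rm u}}$ interval-by-interval, and track how the "initial error'' amplifies from one subinterval to the next.

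\textbf{Setup and partition.} Using that $\|\tilde{\b{\rm u}}\|_{\mathcal{S}(\dot{\rm H}^1)}\le M$, I will choose a small threshold $\eta=\eta(\delta_{sd})\ll 1$ and partition $I=\bigcup_{j=1}^{J} I_j$ with $J=J(M,\eta)$ subintervals on which $\|\tilde{\b{\rm u}}\|_{\mathcal{S}(I_j,\dot{\rm H}^1)}\le\eta$. On each $I_j$ the function $\b{\rm w}$ satisfies a Duhamel formula
\begin{equation*}
\b{\rm w}(t)=\verb"S"(t-t_j)\b{\rm w}(t_j)+i\int_{t_j}^{t}\verb"S"(t-\tau)\bigl[\b{\rm f}(\tilde{\b{\rm u}}+\b{\rm w})-\b{\rm f}(\tilde{\b{\rm u}})\bigr]\,d\tau-i\int_{t_j}^{t}\verb"S"(t-\tau)\b{\rm e}(\tau)\,d\tau,
\end{equation*}
where $t_j$ is the left endpoint of $I_j$. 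Because $\b{\rm f}$ is quadratic, the pointwise bound $|\b{\rm f}(\tilde{\b{\rm u}}+\b{\rm w})-\b{\rm f}(\tilde{\b{\rm u}})|\lesssim |\b{\rm w}|^2+|\b{\rm w}||\tilde{\b{\rm u}}|$ together with the product/Leibniz rule gives a clean control of $\nabla\bigl(\b{\rm f}(\tilde{\b{\rm u}}+\b{\rm w})-\b{\rm f}(\tilde{\b{\rm u}})\bigr)$.

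\textbf{Key estimate and bootstrap.} Applying Strichartz (Theorem \ref{strichartz}) to $\nabla \b{\rm w}$ and choosing an appropriate admissible pair (the natural one at the $\dot{\rm H}^1$ level is $(q,r)=(4,4)$, which makes the $\mathcal{S}(\dot{\rm H}^1)$ norm equivalent to the scattering-size-type norm in view of Definition \ref{scattering size}), I will obtain on each $I_j$ an inequality of the form
\begin{equation*}
\|\b{\rm w}\|_{\mathcal{S}(I_j,\dot{\rm H}^1)}\lesssim \|\verb"S"(t-t_j)\b{\rm w}(t_j)\|_{\mathcal{S}(I_j,\dot{\rm H}^1)}+\varepsilon+\bigl(\eta+\|\b{\rm w}\|_{\mathcal{S}(I_j,\dot{\rm H}^1)}\bigr)\|\b{\rm w}\|_{\mathcal{S}(I_j,\dot{\rm H}^1)}.
\end{equation*}
A standard continuity (bootstrap) argument on $I_j$ then yields $\|\b{\rm w}\|_{\mathcal{S}(I_j,\dot{\rm H}^1)}\lesssim \varepsilon_j$, provided $\eta$ is fixed small and $\varepsilon_j:=\|\verb"S"(t-t_j)\b{\rm w}(t_j)\|_{\mathcal{S}(I_j,\dot{\rm H}^1)}+\varepsilon$ is small enough.

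\textbf{Iteration and choice of $\varepsilon$.} To pass from $I_j$ to $I_{j+1}$, I apply $\verb"S"(t-t_{j+1})$ to $\b{\rm w}(t_{j+1})$, use the Duhamel representation and the estimates just obtained, to get a recursion of the form $\varepsilon_{j+1}\le C\varepsilon_j+C\varepsilon$ for some absolute constant $C>1$. Iterating $J$ times gives $\max_j\varepsilon_j\le (2C)^J\varepsilon$, and hence $\|\b{\rm w}\|_{\mathcal{S}(I,\dot{\rm H}^1)}\le C(M)\varepsilon$. Combining with the a priori bound on $\tilde{\b{\rm u}}$ produces $\|\b{\rm u}\|_{\mathcal{S}(I,\dot{\rm H}^1)}\le M+C(M)\varepsilon\le L(M)$, which is the conclusion. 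The thresholds $\varepsilon(M)$ and $L(M)$ are then read off from this iteration: $\varepsilon$ must be chosen small enough that $(2C)^J\varepsilon$ still fits inside the bootstrap window on every subinterval.

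\textbf{Main obstacle.} The delicate point is the nonlinear estimate at the $\dot{\rm H}^1$-critical level for the quadratic interaction $v\bar u$ and $u^2$: I have to verify, using the Leibniz/fractional product rule together with Sobolev embedding, that $\nabla(\b{\rm f}(\b{\rm u})-\b{\rm f}(\tilde{\b{\rm u}}))$ is controlled in some dual Strichartz space $\mathcal{S}'({\rm L}^2)$ by the product of $\dot{\rm H}^1$-Strichartz norms of $\b{\rm w}$ and $\b{\rm w}$, $\tilde{\b{\rm u}}$, with the exponents matching up correctly in $d=6$. Once that algebraic bookkeeping is done with the pair $(4,4)$, the remainder of the proof is the standard Tao--Visan iteration.
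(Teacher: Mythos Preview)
Your proposal is correct and follows essentially the same approach as the paper: partition into subintervals on which $\|\tilde{\b{\rm u}}\|_{\mathcal{S}(I_j,\dot{\rm H}^1)}$ is small, estimate $\nabla\bigl(\b{\rm f}(\b{\rm u})-\b{\rm f}(\tilde{\b{\rm u}})\bigr)$ in the dual Strichartz norm $L_t^2L_x^{3/2}$ via H\"older with the pairs $(4,4)$ and $(4,\tfrac{12}{5})$ (the latter being the $\Lambda_0$ pair whose Sobolev embedding gives $L^4_x$), run a continuity argument on each $I_j$, and iterate the resulting geometric recursion to choose $\varepsilon(M)$. The ``main obstacle'' you flag is exactly the bilinear bookkeeping the paper carries out explicitly, and it closes with the exponents you indicate.
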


\begin{proof}
Let $ \b{\rm w} := \b{\rm u} - \tilde{\b{\rm u}} = \left( \begin{aligned} w_1 \\ w_2 \end{aligned} \right) $
for $ \b{\rm u} = \left( \begin{aligned} u \\ v \end{aligned} \right) $
and $ \tilde{\b{\rm u}} = \left( \begin{aligned} \tilde{u} \\ \tilde{v} \end{aligned} \right) $.
Then $ \b{\rm w} $ solves the equation
\begin{equation*}
i \b{\rm w}_t + A \b{\rm w}
+ \left( \begin{aligned} \overline{\tilde{u}}w_2+\tilde{v}\overline{w_1} \\ 2\tilde{u}w_1 \quad \end{aligned} \right)
+ \b{\rm f}(\b{\rm w}) + \b{\rm e} = \b{\rm 0}.
\end{equation*}
Since $ \Vert \tilde{\b{\rm u}} \Vert_{\mathcal{S}({\rm\dot H}^1(\mathbb{R}^6))} \le M $,
we can partition $ [t_0, +\infty) $ into $ N = N(M) $ intervals $ I_j = [t_j, t_{j+1}) $ such that
for each $ 0 \le j \le N-1 $, the quantity $ \Vert \tilde{\b{\rm u}} \Vert_{\mathcal{S}(I_j, {\rm\dot H}^1(\mathbb{R}^6))} \le \delta $ is small enough.
By Duhamel formula,
\begin{equation*}
\b{\rm w}(t) = \verb"S"(t-t_j) \b{\rm w}(t_j) + i \int_{I_j} \verb"S"(t-s) \b{\rm N}(\b{\rm w}(s)) {\rm d}s,
\end{equation*}
where
\begin{equation*}
\b{\rm N}(\b{\rm w}) = \left( \begin{aligned} \overline{\tilde{u}}w_2+\tilde{v}\overline{w_1} \\ 2\tilde{u}w_1 \quad \end{aligned} \right)
+ \b{\rm f}(\b{\rm w}) + \b{\rm e}.
\end{equation*}
By Theorem \ref{strichartz}, we obtain
\begin{equation*}
\Vert \b{\rm w} \Vert_{\mathcal{S}(I_j, {\rm\dot H}^1(\mathbb{R}^6))}
\le \Vert \verb"S"(t-t_j) \b{\rm w}(t_j) \Vert_{\mathcal{S}(I_j, {\rm\dot H}^1(\mathbb{R}^6))}
+ C \Vert \nabla \b{\rm N}(\b{\rm w}) \Vert_{\mathcal{S}^\prime(I_j, {\rm L}^2(\mathbb{R}^6))}.
\end{equation*}
Note that $ \Vert \nabla \b{\rm e} \Vert_{\mathcal{S}^\prime({\rm L}^2(\mathbb{R}^6))} \le \varepsilon $,
\begin{equation*}
\begin{aligned}
\Vert \nabla \b{\rm N}(\b{\rm w}) \Vert_{\mathcal{S}^\prime(I_j, {\rm L}^2(\mathbb{R}^6))}
\le & ~ 3 \Vert \nabla (\tilde{\b{\rm u}} \b{\rm w}) \Vert_{{\rm L}_t^2(I_j, {\rm L}_x^{\frac32}(\mathbb{R}^6))}
+ \Vert \nabla \b{\rm f}(\b{\rm w}) \Vert_{{\rm L}_t^2(I_j, {\rm L}_x^{\frac32}(\mathbb{R}^6))} \\
& ~ + \Vert \nabla \b{\rm e} \Vert_{\mathcal{S}^\prime({\rm L}^2(\mathbb{R}^6))} \\
\le & ~ 3 \Vert \nabla \tilde{\b{\rm u}} \b{\rm w} \Vert_{{\rm L}_t^2(I_j, {\rm L}_x^{\frac32}(\mathbb{R}^6))}
+ 3 \Vert \tilde{\b{\rm u}} \nabla \b{\rm w} \Vert_{{\rm L}_t^2(I_j, {\rm L}_x^{\frac32}(\mathbb{R}^6))} \\
& ~ + \Vert \nabla \b{\rm f}(\b{\rm w}) \Vert_{{\rm L}_t^2(I_j, {\rm L}_x^{\frac32}(\mathbb{R}^6))}
+ \varepsilon \\
\le & ~ 3 \Vert \nabla \tilde{\b{\rm u}} \Vert_{{\rm L}_t^4(I_j, {\rm L}_x^{\frac{12}5}(\mathbb{R}^6))} \Vert\b{\rm w} \Vert_{{\rm L}_t^4(I_j, {\rm L}_x^4(\mathbb{R}^6))} \\
& ~ + 3 \Vert \tilde{\b{\rm u}} \Vert_{{\rm L}_t^4(I_j, {\rm L}_x^4(\mathbb{R}^6))} \Vert \nabla \b{\rm w} \Vert_{{\rm L}_t^4(I_j, {\rm L}_x^{\frac{12}5}(\mathbb{R}^6))} \\
& ~ + 2 \Vert \tilde{\b{\rm w}} \Vert_{{\rm L}_t^4(I_j, {\rm L}_x^4(\mathbb{R}^6))} \Vert \nabla \b{\rm w} \Vert_{{\rm L}_t^4(I_j, {\rm L}_x^{\frac{12}5}(\mathbb{R}^6))}
+ \varepsilon \\
\le & ~ 6 \delta \Vert \b{\rm w} \Vert_{\mathcal{S}(I_j, {\rm\dot H}^1(\mathbb{R}^6))}
+ 2 \Vert \b{\rm w} \Vert_{\mathcal{S}(I_j, {\rm\dot H}^1(\mathbb{R}^6))}^2 + \varepsilon.
\end{aligned}
\end{equation*}
Thus
\begin{equation}
\begin{aligned}
\Vert \b{\rm w} \Vert_{\mathcal{S}(I_j, {\rm\dot H}^1(\mathbb{R}^6))}
\le & ~ \Vert \verb"S"(t-t_j) \b{\rm w}(t_j) \Vert_{\mathcal{S}(I_j, {\rm\dot H}^1(\mathbb{R}^6))} \\
& ~ + 6C \delta \Vert \b{\rm w} \Vert_{\mathcal{S}(I_j, {\rm\dot H}^1(\mathbb{R}^6))}
+ 2C \Vert \b{\rm w} \Vert_{\mathcal{S}(I_j, {\rm\dot H}^1(\mathbb{R}^6))}^2 + C \varepsilon.
\end{aligned}
\label{p1}
\end{equation}

Provided $ \delta < \frac{1}{12C} $ and $ \Vert \verb"S"(t-t_j) \b{\rm w}(t_j) \Vert_{\mathcal{S}(I_j, {\rm\dot H}^1(\mathbb{R}^6))} + C \varepsilon \le \frac{1}{8C} - \frac{3}{2} \delta $, we can get
\begin{equation*}
\Vert \b{\rm w} \Vert_{\mathcal{S}(I_j, {\rm\dot H}^1(\mathbb{R}^6))}
\le 6C \delta \Vert \b{\rm w} \Vert_{\mathcal{S}(I_j, {\rm\dot H}^1(\mathbb{R}^6))}
+ 2C \Vert \b{\rm w} \Vert_{\mathcal{S}(I_j, {\rm\dot H}^1(\mathbb{R}^6))}^2 + \frac{1}{8C} - \frac{3}{2} \delta ,
\end{equation*}
which implies $ \Vert \b{\rm w} \Vert_{\mathcal{S}(I_j, {\rm\dot H}^1(\mathbb{R}^6))} \le \frac{1}{4C} - 3 \delta $.
Under this circumstance, we have
\begin{equation*}
6C \delta \Vert \b{\rm w} \Vert_{\mathcal{S}(I_j, {\rm\dot H}^1(\mathbb{R}^6))}
+ 2C \Vert \b{\rm w} \Vert_{\mathcal{S}(I_j, {\rm\dot H}^1(\mathbb{R}^6))}^2
\le \frac{1}{2} \Vert \b{\rm w} \Vert_{\mathcal{S}(I_j, {\rm\dot H}^1(\mathbb{R}^6))}.
\end{equation*}
Thus,
\begin{equation*}
\Vert \b{\rm w} \Vert_{\mathcal{S}(I_j, {\rm\dot H}^1(\mathbb{R}^6))}
\le \frac{1}{2} \Vert \b{\rm w} \Vert_{\mathcal{S}(I_j, {\rm\dot H}^1(\mathbb{R}^6))}
+ \Vert \verb"S"(t-t_j) \b{\rm w}(t_j) \Vert_{\mathcal{S}(I_j, {\rm\dot H}^1(\mathbb{R}^6))}
+ C \varepsilon.
\end{equation*}
As a result,
\begin{equation}
\Vert \b{\rm w} \Vert_{\mathcal{S}(I_j, {\rm\dot H}^1(\mathbb{R}^6))}
\le 2\Vert \verb"S"(t-t_j) \b{\rm w}(t_j) \Vert_{\mathcal{S}(I_j, {\rm\dot H}^1(\mathbb{R}^6))}
+ 2C \varepsilon.
\label{p2}
\end{equation}

Now take $ t = t_{j+1} $ in the above Duhamel formula and apply $ \verb"S"(t - t_{j+1}) $ to both sides to obtain
\begin{equation*}
\verb"S"(t - t_{j+1}) \b{\rm w}(t_{j+1}) = \verb"S"(t - t_j) \b{\rm w}(t_j) + i \int_{I_j} \verb"S"(t - s) \b{\rm N}(\b{\rm w}(s)) {\rm d}s.
\end{equation*}
Similarly, we can also get
\begin{equation*}
\begin{aligned}
& ~ \Vert \verb"S"(t - t_{j+1}) \b{\rm w}(t_{j+1}) \Vert_{\mathcal{S}({\rm\dot H}^1(\mathbb{R}^6))} \\
\le & ~ \Vert \verb"S"(t-t_j) \b{\rm w}(t_j) \Vert_{\mathcal{S}({\rm\dot H}^1(\mathbb{R}^6))}
+ 3C \delta \Vert \b{\rm w} \Vert_{\mathcal{S}(I_j, {\rm\dot H}^1(\mathbb{R}^6))}
+ C \Vert \b{\rm w} \Vert_{\mathcal{S}(I_j, {\rm\dot H}^1(\mathbb{R}^6))}^2 + C \varepsilon,
\end{aligned}
\end{equation*}
and
\begin{equation*}
\Vert \verb"S"(t - t_{j+1}) \b{\rm w}(t_j) \Vert_{\mathcal{S}({\rm\dot H}^1(\mathbb{R}^6))}
\le 2 \Vert \verb"S"(t-t_j) \b{\rm w}(t_j) \Vert_{\mathcal{S}({\rm\dot H}^1(\mathbb{R}^6))} + 2C \varepsilon.
\end{equation*}
Iterating the above procedure with the start $ j=0 $, we obtain
\begin{equation*}
\begin{aligned}
\Vert \verb"S"(t - t_j) \b{\rm w}(t_{j+1}) \Vert_{\mathcal{S}({\rm\dot H}^1(\mathbb{R}^6))}
\le & ~ 2^j \Vert \verb"S"(t-t_0) \b{\rm w}(t_0) \Vert_{\mathcal{S}({\rm\dot H}^1(\mathbb{R}^6))} + (2^j - 1) 2C \varepsilon \\
\le & ~  2^{j+2}C \varepsilon.
\end{aligned}
\end{equation*}
To accommodate $ \Vert \verb"S"(t-t_j) \b{\rm w}(t_j) \Vert_{\mathcal{S}(I_j, {\rm\dot H}^1(\mathbb{R}^6))} + C \varepsilon \le \frac{1}{8C} - \frac{3}{2} \delta $ for all intervals $ I_j $, $ 0 \le j \le N-1 $, we require that
\begin{equation*}
2^{N+2}C \varepsilon \le \frac{1}{8C} - \frac{3}{2} \delta,
\end{equation*}
which can be ensured by choosing $0<\varepsilon\ll 1$.
\end{proof}

\section{Variational Characterization}\label{var}
In this section, we are in the position to give the variational characterization for the sharp Gargliardo-Nirenberg inequality.  Apart from stating the existence of the ground state, we will also show some relationships between the energy of solution and the energy of ground state, which facilitate us to obtain the finite time blow-up result in Theorem \ref{classification}.

\subsection{Ground state}\label{ground}
 We first recall some basic properties of ground state. As a quick application,  we obtain the sharp Gargliardo-Nirenberg inequality.
Then we will show the classification in Theorem \ref{classification} is complete.

\begin{proposition}
[Ground state, \cite{Hayashi2013}]\label{ground state}
For $ D_6 = \left\{ \b{\rm G} \in {\rm\dot H}^1 \cap {\rm L}^3 ~ \big\vert ~ R(\b{\rm G}) > 0 \right\} $,
the minimal $ J_{\min} $ of the nonnegative functional
\[
    J(\b{\rm G}) : = \left[ H(\b{\rm G}) \right]^3 \left[ R(\b{\rm G}) \right]^{-2},  \qquad  \b{\rm G} \in D_6
\]
are attained at $ {\b{\rm G}} = (\Phi, \Psi)^T \in \C^2 $,
whose expression has to have the form of $ (\Phi, \Psi) = (e^{i\theta} m \phi(nx), e^{2i\theta} m \varphi(nx)) $,
where $ m > 0 $, $ n > 0 $, $ \theta \in \mathbb R $,
and $ \b{\rm W} := (\phi, \varphi)^T \in D_6 $ is a pair of non-negative real-valued functions satisfying \eqref{gs}.
The function $ {\b{\rm W}} $ is called a {\bf ground state} with $ J({\b{\rm W}}) = J_{\min} $.
The set of all ground states is denoted as $ \mathcal{G} $.

In addition, ground states for \eqref{gs} are unique up to translations and dilations.
The unique ground state is given by $ \b{\rm W} = (\phi_0, \frac{\phi_0}{\sqrt{\kappa}})^T $ where $ \phi_0 $ is a ground state of \eqref{6} in $ {\dot H}^1(\R^6) \cap L^3(\R^6)$.
\end{proposition}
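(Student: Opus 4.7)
The plan is to produce a minimizer of $J$ on $D_6$ by the direct method, identify the Euler--Lagrange system with \eqref{gs} after a rescaling, and reduce uniqueness to the classical uniqueness of the Aubin--Talenti bubble. First I record the scale invariance that the statement builds in: the phase action $\b{\rm G}\mapsto(e^{i\theta}u,e^{2i\theta}v)$ leaves both $H$ and $R=\Re\int\bar v u^2$ unchanged (the mismatched phases on $u,v$ are exactly what is required for $R$ to be invariant), while the two-parameter dilation $\b{\rm G}\mapsto(mu(nx),mv(nx))$ sends $H\mapsto m^2n^{-4}H$ and $R\mapsto m^3n^{-6}R$, so that $J=H^3R^{-2}\mapsto J$. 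Using phase to arrange $R(\b{\rm G}_n)\ge0$ and then applying Schwarz symmetrization to $|u_n|,|v_n|$ only decreases $H$ (P\'olya--Szeg\H{o}) and only increases $R$ (Riesz), so every minimizing sequence may be taken to consist of nonnegative radial nonincreasing pairs.

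The delicate step is existence. After normalizing $\Vert u_n\Vert_{\dot H^1}=\Vert v_n\Vert_{\dot H^1}=1$ using the two-parameter dilation, a subsequence converges weakly in $\dot{\rm H}^1(\R^6)$ to some radial nonnegative $(\Phi,\Psi)$, but the critical embedding $\dot H^1(\R^6)\hookrightarrow L^3(\R^6)$ fails to be compact even on radial functions, so passing to the limit in $R(\b{\rm G}_n)=\int v_nu_n^2$ requires a concentration-compactness/profile-decomposition argument in the spirit of G\'erard and Keraani. This is the main obstacle: the homogeneity of $J$ under the two-parameter scaling produces strict subadditivity of the infimum when one splits mass into multiple profiles, which rules out dichotomy, while the normalization $R(\b{\rm G}_n)\to J_{\min}^{-1/2}>0$ rules out vanishing. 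Hence a single profile survives as the desired minimizer $\bar{\b{\rm G}}=(\Phi,\Psi)$.

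Once a minimizer exists, computing $\partial J/\partial\bar u=\partial J/\partial\bar v=0$ with $\partial R/\partial\bar u=v\bar u$ and $\partial R/\partial\bar v=u^2/2$ yields the Euler--Lagrange system $-\Delta u=\mu v\bar u$, $-\kappa\Delta v=\mu u^2$ with Lagrange multiplier $\mu=2H(\bar{\b{\rm G}})/(3R(\bar{\b{\rm G}}))>0$; pairing against $\bar u$ and $\bar v$ and taking real parts gives the consistency identity $\int|\nabla u|^2=\mu R=\kappa\int|\nabla v|^2$. Choosing $m,n>0$ with $\mu m=n^2$ and pulling out the phase $(e^{i\theta},e^{2i\theta})$ reduces the system for $\bar{\b{\rm G}}$ to \eqref{gs} for a nonnegative radial pair $\b{\rm W}=(\phi,\varphi)$, establishing the stated form of every minimizer. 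Uniqueness of $\b{\rm W}$ up to translation and dilation is then transferred from \eqref{gs} to the scalar critical equation \eqref{6} via the Emden--Fowler transform $Tf(t)=e^{2t}f(e^t)$ recalled above the statement: $T$ turns the two radial ODEs into a coupled autonomous system on $\R$ whose phase-plane analysis, carried out in \cite{Hayashi2011}, forces the proportionality $\sqrt{\kappa}\varphi=\phi$, so that \eqref{gs} collapses to \eqref{6}, and the Aubin--Talenti classification in $\dot H^1(\R^6)$ recorded in \cite{Kenig2006} supplies the unique $\phi_0$, giving $\b{\rm W}=(\phi_0,\phi_0/\sqrt{\kappa})^T$ as claimed.
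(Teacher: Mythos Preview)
The paper does not prove this proposition: it is stated with a citation to \cite{Hayashi2013} (and the uniqueness statement is attributed to \cite{Hayashi2011} in the introduction) and is used as a black box. There is therefore no ``paper's own proof'' to compare against; the only thing the paper adds is Remark~\ref{HR}, the Pohozaev-type scaling computation $H(\b{\rm W}):R(\b{\rm W})=3:2$.

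Your outline is the standard variational route and is essentially correct: reduce to nonnegative radially decreasing pairs via the phase action and Schwarz symmetrization, run a concentration--compactness / profile decomposition argument to defeat the non-compact critical embedding $\dot H^1(\R^6)\hookrightarrow L^3(\R^6)$, read off the Euler--Lagrange system, rescale to \eqref{gs}, and then reduce uniqueness to the Aubin--Talenti classification via the Emden--Fowler change of variables. One small slip: the two-parameter dilation $(m,n)$ acts on $\Vert u\Vert_{\dot H^1}$ and $\Vert v\Vert_{\dot H^1}$ by the \emph{same} factor $mn^{-2}$, so you cannot normalize both to $1$ independently; what you can do is normalize, say, $H(\b{\rm G}_n)=1$ and use the remaining freedom to pin down the scale, after which boundedness in $\dot{\rm H}^1$ follows. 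This does not affect the rest of your argument. The subadditivity step you invoke is indeed the strict concavity/convexity inequality $H_1^{3/2}+H_2^{3/2}<(H_1+H_2)^{3/2}$, which together with $R_j\le J_{\min}^{-1/2}H_j^{3/2}$ forbids splitting into two nontrivial profiles.
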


\begin{remark}\label{HR}
For the energy of ground state $ \b{\rm W} = (\phi, \varphi)^T \in D_6 $, we have the following scaling identity:
\begin{equation*}
E \left( \lambda^{\alpha} \b{\rm W}(\lambda^\beta ~ \cdot ~) \right) = \lambda^{2\alpha - 4\beta}H(\b{\rm W}) - \lambda^{3\alpha - 6\beta}R(\b{\rm W}), \qquad \forall ~ \lambda \in (0, \infty).
\end{equation*}
Using variational derivatives and letting  $\lambda = 1$ in both sides of above identity, we can obtain
\[
\begin{aligned}
0 = & ~ \Re \int_{\mathbb{R}^6} \left( -2\Delta \phi -2\phi \varphi \right) \cdot \overline{ \left( \frac{\rm d}{{\rm d}\lambda} \Big|_{\lambda=1} \lambda^{\alpha} \phi(\lambda^\beta x) \right) } \\
& \quad \quad + \left( -\kappa\Delta \varphi -\phi^2 \right) \cdot \overline{ \left( \frac{\rm d}{{\rm d}\lambda} \Big|_{\lambda=1} \lambda^{\alpha} \varphi(\lambda^\beta x) \right) } {\rm d}x   \\
= & ~ (2\alpha - 4\beta) H(\b{\rm W}) - (3\alpha - 6\beta) R(\b{\rm W}) \\
= & ~ [2H(\b{\rm W}) - 3R(\b{\rm W})] \alpha - [4H(\b{\rm W}) - 6R(\b{\rm W})] \beta, \quad \forall ~ \alpha \in \mathbb{R}, ~ \beta \in \R.
\end{aligned}
\]
This yields
\begin{equation}
H(\b{\rm W}) : R(\b{\rm W}) = 3 : 2.
\label{Q}
\end{equation}
This argument of scaling analysis produces the same effect as Pohozaev's identities.
\end{remark}

\begin{corollary}[Gagliardo-Nirenberg inequality]\label{GN}
For any $ \b{\rm g} \in {\rm\dot H}^1(\mathbb{R}^6) $,
\begin{equation}
R(\b{\rm g}) \le C_{GN} [H(\b{\rm g})]^{\frac32},
\label{GNi}
\end{equation}
where
$ C_{GN} = J_{\min}^{-\frac12} > 0 $ is a constant.
Besides, $ R(\b{\rm W}) = C_{GN} [H(\b{\rm W})]^{\frac32} $ for any $ \b{\rm W} \in \mathcal{G} $.
\end{corollary}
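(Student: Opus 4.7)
The plan is to deduce the inequality directly from the definition of $J_{\min}$ in Proposition \ref{ground state}, so the proof should be essentially a rearrangement of inequalities rather than a new variational argument.

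First I would handle the trivial case. If $R(\b{\rm g}) \le 0$, then because $H(\b{\rm g}) \ge 0$ and $C_{GN} > 0$, the inequality \eqref{GNi} holds automatically. In particular, if $\b{\rm g} \notin D_6$, there is nothing to prove. It is also worth noting at the outset that the right-hand side of \eqref{GNi} is finite for every $\b{\rm g} \in {\rm\dot H}^1(\R^6)$, and that $R(\b{\rm g})$ itself is well-defined for such $\b{\rm g}$: the Sobolev embedding $\dot{H}^1(\R^6) \hookrightarrow L^3(\R^6)$ (sharp since $\tfrac13 = \tfrac12 - \tfrac16$) together with H\"older's inequality gives $|R(\b{\rm g})| \lesssim \|v\|_{\dot H^1} \|u\|_{\dot H^1}^2$.

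Next I would treat the non-trivial case $\b{\rm g} \in D_6$, i.e.\ $R(\b{\rm g}) > 0$. By the definition of $J_{\min}$ as the infimum of $J$ over $D_6$, we have
\[
J_{\min} \;\le\; J(\b{\rm g}) \;=\; \frac{[H(\b{\rm g})]^3}{[R(\b{\rm g})]^2}.
\]
Since $R(\b{\rm g}) > 0$ and $H(\b{\rm g}) \ge 0$, taking the positive square root and rearranging yields
\[
R(\b{\rm g}) \;\le\; J_{\min}^{-1/2} [H(\b{\rm g})]^{3/2} \;=\; C_{GN} [H(\b{\rm g})]^{3/2},
\]
which is exactly \eqref{GNi}. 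The constant $C_{GN} = J_{\min}^{-1/2}$ is strictly positive because Proposition \ref{ground state} asserts $J_{\min}$ is attained at a nonzero ground state, hence $0 < J_{\min} < \infty$.

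Finally, for the sharpness statement, I would apply Proposition \ref{ground state} to any $\b{\rm W} \in \mathcal{G}$: since $J(\b{\rm W}) = J_{\min}$ and $\b{\rm W} \in D_6$ guarantees $R(\b{\rm W}) > 0$, reversing the previous computation gives equality $R(\b{\rm W}) = C_{GN} [H(\b{\rm W})]^{3/2}$. There is essentially no obstacle here; the entire statement is a bookkeeping consequence of Proposition \ref{ground state}, and all the genuine work (existence of the minimizer, uniqueness of $\b{\rm W}$ up to symmetries) has already been done upstream via the operator $T$ and the scalar equation \eqref{6}.
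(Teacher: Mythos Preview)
Your proof is correct and follows essentially the same route as the paper: split into the trivial case $R(\b{\rm g})\le 0$ and the case $\b{\rm g}\in D_6$, then rearrange $J(\b{\rm g})\ge J_{\min}$ to obtain \eqref{GNi}, with equality for $\b{\rm W}\in\mathcal{G}$ since $J(\b{\rm W})=J_{\min}$. Your additional remark that $\dot H^1(\R^6)\hookrightarrow L^3(\R^6)$ (so $R(\b{\rm g})$ is finite and ${\rm\dot H}^1\setminus D_6$ is exactly $\{R\le 0\}$) is a clarification the paper leaves implicit.
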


\begin{proof}
For $ \b{\rm g} \in {\rm\dot H}^1(\R^6) \backslash D_6 = \left\{ \b{\rm g} \in {\rm\dot H}^1(\R^6) \big\vert R(\b{\rm g}) \le 0 \right\} $, \eqref{GNi} holds obviously since $ C_{GN} > 0 $ and $ H(\b{\rm g}) \ge 0 $.  For $ \b{\rm g} \in D_6 $, by Proposition \ref{ground state}, we have
\[
[H(\b{\rm g})]^3 [R(\b{\rm g})]^{-2} = J(\b{\rm g}) \ge J_{\min},
\]
which leads to \eqref{GNi} directly.

From the proof of Proposition \ref{ground state}, we can easily see that $ R(\b{\rm W}) = C_{GN} [H(\b{\rm W})]^{\frac32} $ for any $ \b{\rm W} \in \mathcal{G} $.
\end{proof}

\begin{proposition}[Coercivity of energy]\label{coer}
Let $ 0 < \delta < 1 $.
For the initial data $ \b{\rm u}_0 \in {\rm\dot H}^1(\R^6) $ and satisfying $ E(\b{\rm u}_0) \le (1 - \delta) E(\b{\rm W}) $ in \eqref{NLS system}, if $ H(\b{\rm u}_0) < H(\b{\rm W}) $, then there exists $ \delta^{\prime\prime} = \delta^{\prime\prime} (\delta) > 0 $ so that
\begin{equation}
2 H(\b{\rm u}) - 3 R(\b{\rm u}) \ge \delta^{\prime\prime} H(\b{\rm u});
\label{coe}
\end{equation}
if $ H(\b{\rm u}_0) > H(\b{\rm W}) $ then there exists $ \tilde\delta^{\prime\prime} = \tilde\delta^{\prime\prime} (\delta) > 0 $ so that
\begin{equation}
2 H(\b{\rm u}) - 3 R(\b{\rm u}) \le - \tilde\delta^{\prime\prime} H(\b{\rm u}).
\label{coe-1}
\end{equation}
\end{proposition}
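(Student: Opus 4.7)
The plan is to combine the sharp Gagliardo–Nirenberg inequality in Corollary \ref{GN} with the Pohozaev relation $H(\b{\rm W}):R(\b{\rm W})=3:2$ from \eqref{Q}. These together give $E(\b{\rm W})=\tfrac13 H(\b{\rm W})$ and $C_{GN}=\tfrac{2}{3}[H(\b{\rm W})]^{-1/2}$, so that the one–dimensional function
\[
f(y):=y-C_{GN}y^{3/2},\qquad y\geq 0,
\]
satisfies $E(\b{\rm u})\geq f(H(\b{\rm u}))$ for every $\b{\rm u}\in\dot{\rm H}^1(\R^6)$, attains its unique maximum at $y=H(\b{\rm W})$, with $f(H(\b{\rm W}))=E(\b{\rm W})$, is strictly increasing on $[0,H(\b{\rm W})]$ and strictly decreasing on $[H(\b{\rm W}),\infty)$. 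This is the main analytic tool.

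Next I would run a continuity/trapping argument. By conservation of energy, $E(\b{\rm u}(t))=E(\b{\rm u}_0)\leq (1-\delta)E(\b{\rm W})$ for all $t$ in the maximal interval of existence, and $H(\b{\rm u}(\cdot))$ is continuous in time because $\b{\rm u}\in C_t\dot{\rm H}^1$. The inequality $f(H(\b{\rm u}(t)))\leq (1-\delta)E(\b{\rm W})<E(\b{\rm W})=f(H(\b{\rm W}))$ rules out $H(\b{\rm u}(t))=H(\b{\rm W})$, so by continuity the trajectory $t\mapsto H(\b{\rm u}(t))$ stays in the same connected component of $\{y\geq 0:f(y)\leq (1-\delta)E(\b{\rm W})\}$ as $H(\b{\rm u}_0)$. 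In case (1) this yields $H(\b{\rm u}(t))\leq \bar y<H(\b{\rm W})$ where $\bar y$ is the smaller root of $f(y)=(1-\delta)E(\b{\rm W})$; in case (2) this yields $H(\b{\rm u}(t))\geq \underline y>H(\b{\rm W})$ where $\underline y$ is the larger root. (As a by–product this also proves the Remark after Theorem \ref{classification}: $E(\b{\rm u}_0)<E(\b{\rm W})$ forces $H(\b{\rm u}_0)\ne H(\b{\rm W})$.)

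With the trapping in hand, the coercivity estimates are purely algebraic. For \eqref{coe}, I use Gagliardo–Nirenberg directly:
\[
2H(\b{\rm u})-3R(\b{\rm u})\geq 2H(\b{\rm u})-3C_{GN}[H(\b{\rm u})]^{3/2}
=2H(\b{\rm u})\Bigl(1-\sqrt{H(\b{\rm u})/H(\b{\rm W})}\Bigr),
\]
and the trapping $H(\b{\rm u})\leq \bar y<H(\b{\rm W})$ gives \eqref{coe} with $\delta''=2(1-\sqrt{\bar y/H(\b{\rm W})})>0$. For \eqref{coe-1} the GN bound points the wrong way, so I instead use the identity $2H(\b{\rm u})-3R(\b{\rm u})=3E(\b{\rm u})-H(\b{\rm u})$ (coming from $R=H-E$) together with $E(\b{\rm u})\leq (1-\delta)E(\b{\rm W})=\tfrac{1-\delta}{3}H(\b{\rm W})$, yielding
\[
2H(\b{\rm u})-3R(\b{\rm u})\leq (1-\delta)H(\b{\rm W})-H(\b{\rm u}),
\]
and the trapping $H(\b{\rm u})\geq \underline y>H(\b{\rm W})$ gives \eqref{coe-1} with $\tilde\delta''=1-(1-\delta)H(\b{\rm W})/\underline y>0$.

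The only delicate point is the trapping step, where one must verify that $\bar y$ and $\underline y$ are well-defined and strictly separated from $H(\b{\rm W})$ uniformly in $\delta$; this reduces to the strict concavity/decay of $f$ near its maximum, i.e.\ $f(H(\b{\rm W}))-f(y)\gtrsim (y-H(\b{\rm W}))^2$ for $y$ near $H(\b{\rm W})$, which is immediate from $f''(H(\b{\rm W}))=-\tfrac{3}{4}C_{GN}[H(\b{\rm W})]^{-1/2}<0$. Everything else is bookkeeping.
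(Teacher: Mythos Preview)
Your proposal is correct and follows essentially the same route as the paper: both introduce $f(y)=y-C_{GN}y^{3/2}$, use $f(H(\b{\rm u}))\le E(\b{\rm u})<E(\b{\rm W})=f(H(\b{\rm W}))$ together with continuity of $t\mapsto H(\b{\rm u}(t))$ to obtain a quantitative trapping $H(\b{\rm u})\le (1-\delta')H(\b{\rm W})$ or $H(\b{\rm u})\ge (1+\tilde\delta')H(\b{\rm W})$, then derive \eqref{coe} from Gagliardo--Nirenberg and \eqref{coe-1} from the identity $2H-3R=3E-H$. Your constants $\bar y,\underline y$ are exactly the paper's $(1-\delta')H(\b{\rm W})$ and $(1+\tilde\delta')H(\b{\rm W})$; the only addition is your final remark on $f''(H(\b{\rm W}))<0$, which makes explicit the quantitative separation that the paper leaves implicit.
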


\begin{proof}
We start with the energy of $ \b{\rm u} $, by \eqref{GNi},
\begin{equation}
E(\b{\rm u}) = H(\b{\rm u}) - R(\b{\rm u}) \ge H(\b{\rm u}) - C_{GN} [H(\b{\rm u})]^{\frac32}.
\label{f}
\end{equation}
 Define $f(y) = y - C_{GN} y^{\frac32} $ for $ y = y(t) := H(\b{\rm u}(t)) \ge 0 $.
Then $ f^\prime(y) = 1 - \frac32 C_{GN} y^{\frac12} $.
Thus $ f^\prime(y) \ge 0 $ only when $ y \le y_0 := \frac4{9C_{GN}^2} $.
 From Remark \ref{HR} and Corollary \ref{GN}, we may obtain $ C_{GN} = \frac{2}{3[H(\b{\rm W})]^{\frac12}}$ and $y_0 = H(\b{\rm W})$.

Therefore,
\begin{equation}
f_{\max}(y) = f(y_0) = f(H(\b{\rm W})) = H(\b{\rm W}) - C_{GN} [H(\b{\rm W})]^{\frac32} = \frac13 H(\b{\rm W}) = E(\b{\rm W}).
\label{fmax}
\end{equation}
 For any time $ t \in I $, by \eqref{f}, \eqref{fmax}, and the conversation of energy, we get
\begin{equation*}
f(H(\b{\rm u}(t))) \le E(\b{\rm u}(t)) = E(\b{\rm u_0}) < E(\b{\rm W}) = f(H(\b{\rm W})),
\end{equation*}
which means $ H(\b{\rm u}(t)) \ne H(\b{\rm W}) $ for any $ t \in I $.

If $ H(\b{\rm u}_0) < H(\b{\rm W}) $, by the continuity of $ f(y) $ and $ y(t) $,  there exists $ \delta^\prime = \delta^\prime (\delta) > 0 $ so that
\begin{equation}
H(\b{\rm u}(t)) \le (1 - \delta^\prime) H(\b{\rm W}), \quad \forall ~ t \in I.
\label{<}
\end{equation}
Similarly, if $ H(\b{\rm u}_0) > H(\b{\rm W}) $, there exists $ \tilde\delta^\prime = \tilde\delta^\prime (\delta) > 0 $ so that
\begin{equation}
H(\b{\rm u}(t)) \ge (1 + \tilde\delta^\prime) H(\b{\rm W}), \quad \forall ~ t \in I.
\label{>}
\end{equation}

Furthermore, there exist $\delta^{\prime\prime} = \delta^{\prime\prime} (\delta^\prime) > 0 $ and $ \tilde\delta^{\prime\prime} = \tilde\delta^{\prime\prime} (\delta, \tilde\delta^\prime) > 0 $ so that
\[
2 - 3 \frac{R(\b{\rm u})}{H(\b{\rm u})} \ge 2 - 3 C_{GN}[H(\b{\rm u})]^{\frac12} = 2 - 2 \left( \frac{H(\b{\rm u})}{H(\b{\rm W})} \right)^{\frac12} \ge 2 \left[ 1 - \left( 1 - \delta^\prime \right)^{\frac12} \right] := \delta^{\prime\prime};
\]
and
\[
2 - 3 \frac{R(\b{\rm u})}{H(\b{\rm u})} = 3 \frac{E(\b{\rm u}_0)}{H(\b{\rm u})} - 1 \le 3 \frac{(1 - \delta)E(\b{\rm W})}{(1 + \delta^\prime)H(\b{\rm W})} - 1 = \frac{(1 - \delta)}{(1 + \delta^\prime)} - 1 := - \tilde\delta^{\prime\prime},
\]
that is,
\begin{equation}
2 H(\b{\rm u}) - 3 R(\b{\rm u}) \ge \delta^{\prime\prime} H(\b{\rm u});
\label{coe''}
\end{equation}
and
\begin{equation}
2 H(\b{\rm u}) - 3 R(\b{\rm u}) \le - \tilde\delta^{\prime\prime} H(\b{\rm u}).
\label{coe'''}
\end{equation}
So we complete the proof.
\end{proof}

\subsection{Energy trapping}\label{et}
 For the solution to classical focusing \eqref{NLS}, generally speaking, we can not determine the sign of its energy. However, under certain circumstance, we may determine the sign of a general function regardless of whether it is a solution to \eqref{NLS} or not. For us, we may generalize the above fact to our setting.

\begin{lemma}\label{E>0}
For any $ \b{\rm v} \in {\rm\dot H}^1(\mathbb{R}^6) $, if $ H(\b{\rm v}) \le H(\b{\rm W}) $, then
\[
E(\b{\rm v}) \ge 0.
\]
\end{lemma}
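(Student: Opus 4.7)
The plan is to deduce this directly from the sharp Gagliardo--Nirenberg inequality (Corollary \ref{GN}) together with the identification of $C_{GN}$ in terms of $H(\b{\rm W})$ that appears in the proof of Proposition \ref{coer}.

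First I would recall that, by Corollary \ref{GN}, for any $\b{\rm v} \in {\rm\dot H}^1(\R^6)$ we have
\[
R(\b{\rm v}) \le C_{GN}\,[H(\b{\rm v})]^{\frac32},
\]
and from the computation in Proposition \ref{coer} (using Remark \ref{HR} to get $H(\b{\rm W}):R(\b{\rm W})=3:2$, hence $E(\b{\rm W})=\tfrac13 H(\b{\rm W})$, together with the identity $R(\b{\rm W})=C_{GN}[H(\b{\rm W})]^{3/2}$) one obtains
\[
C_{GN} = \frac{2}{3\,[H(\b{\rm W})]^{\frac12}}.
\]

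Next I would plug these two facts into the definition $E(\b{\rm v})=H(\b{\rm v})-R(\b{\rm v})$ to get
\[
E(\b{\rm v}) \;\ge\; H(\b{\rm v}) - C_{GN}[H(\b{\rm v})]^{\frac32} \;=\; H(\b{\rm v})\left(1 - \frac{2}{3}\left(\frac{H(\b{\rm v})}{H(\b{\rm W})}\right)^{\frac12}\right).
\]
Under the hypothesis $H(\b{\rm v})\le H(\b{\rm W})$ the parenthesised factor is at least $1-\tfrac{2}{3}=\tfrac{1}{3}$, so in fact $E(\b{\rm v})\ge \tfrac13 H(\b{\rm v}) \ge 0$, which is the desired conclusion (and slightly stronger).

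There is essentially no obstacle here: the only subtlety is being careful that the conclusion holds in both regimes $R(\b{\rm v})\le 0$ (where $E(\b{\rm v})\ge H(\b{\rm v})\ge 0$ is immediate) and $R(\b{\rm v})>0$ (where the sharp Gagliardo--Nirenberg inequality gives content). The statement does not assume $\b{\rm v}$ is a solution to \eqref{NLS system}, so no conservation of energy is used; the argument is a pure functional inequality, and the role of the hypothesis $H(\b{\rm v})\le H(\b{\rm W})$ is purely to place $H(\b{\rm v})$ on the increasing side of the function $y\mapsto y-C_{GN}y^{3/2}$ analysed in Proposition \ref{coer}.
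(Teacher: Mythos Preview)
Your proof is correct and follows essentially the same route as the paper: both bound $E(\b{\rm v})$ below by $H(\b{\rm v})-C_{GN}[H(\b{\rm v})]^{3/2}$ via Corollary~\ref{GN}, then use the identification $C_{GN}=\tfrac{2}{3}[H(\b{\rm W})]^{-1/2}$ to conclude this quantity is nonnegative when $H(\b{\rm v})\le H(\b{\rm W})$. Your factoring $H(\b{\rm v})\bigl(1-\tfrac{2}{3}(H(\b{\rm v})/H(\b{\rm W}))^{1/2}\bigr)\ge \tfrac13 H(\b{\rm v})$ is slightly more direct than the paper's monotonicity analysis of $f(y)=y-C_{GN}y^{3/2}$ via $f'$, and indeed yields the marginally sharper bound $E(\b{\rm v})\ge \tfrac13 H(\b{\rm v})$.
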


\begin{proof}
 We just need to show $ H(\b{\rm v}) - C_{GN} [H(\b{\rm v})]^{\frac32} \ge 0 $, since
\[
E(\b{\rm v}) = H(\b{\rm v}) - R(\b{\rm v}) \ge H(\b{\rm v}) - C_{GN} [H(\b{\rm v})]^{\frac32}.
\]
Considering $ 0 \le H(\b{\rm v}) \le H(\b{\rm W}) $, we define $ f(y) = y - C_{GN} y^{\frac32} $ for $ 0 \le y \le H(\b{\rm W}) $.
Then $ f^\prime(y) = 1 - \frac32 C_{GN} y^{\frac12} $.
Thus $ f^\prime(y) \ge 0 $ only when $ y \le y_0 := \frac4{9C_{GN}^2} $.
 The fact $ C_{GN} = \frac{R(\b{\rm W})}{[H(\b{\rm W})]^{\frac32}} $ and $ H(\b{\rm W}) : R(\b{\rm W}) = 2 : 3 $ implies $$ C_{GN} = \frac{2}{3[H(\b{\rm W})]^{\frac12}} \quad \text{and}\quad  y_0 = H(\b{\rm W}) .$$
Therefore, we obtain $ E(\b{\rm v}) \ge f(H(\b{\rm v})) \ge 0 $ if $ 0 \le H(\b{\rm v}) \le H(\b{\rm W}) $.
\end{proof}

 Next, we will establish the equivalence relation between the energy and the kinetic energy of the solution to \eqref{NLS system}, that is, $ H(\b{\rm u}) \sim E(\b{\rm u}) $ under certain restrictions. For the classical \eqref{NLS}, it is easy to obtain $ E(\b{\rm u}) \lesssim H(\b{\rm u}) $. Indeed,
for the focusing case, we can use the fact that $ R(\b{\rm u}) \ge 0 $, which is missing in our setting;
for the defocusing case, we can use the associated Gagliardo-Nirenberg inequality. However,  for \eqref{NLS system}, we do not have the natural comparison, $ E(\b{\rm u}) \lesssim H(\b{\rm u}) $.
Our proof here is different from the argument in \cite{Kenig2006}.
To obtain $ \vert R(\b{\rm u}) \vert \lesssim_{\kappa} [H(\b{\rm u})]^{\frac32} $,  the Arithmetic-Geometry Mean-value inequality is used.
Combining with the coercivity of energy, we prove $ H(\b{\rm u}) \sim E(\b{\rm u})$.

\begin{lemma}\label{R}
For any $ \b{\rm v} \in {\rm\dot H}^1(\mathbb{R}^6) $,
\begin{equation}
\vert R(\b{\rm v}) \vert \le C(\kappa) [H(\b{\rm v})]^{\frac32},
\label{||}
\end{equation}
where $ C(\kappa) = \sqrt{\frac{8}{27\kappa}} > 0 $.
\end{lemma}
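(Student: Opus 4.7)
The plan is to bound $|R(\b{\rm v})| = |\Re \int \bar v u^2\,dx|$ by the product $\|u\|_{L^3}^2 \|v\|_{L^3}$ via H\"older's inequality, then invoke the Sobolev embedding $\dot H^1(\R^6) \hookrightarrow L^3(\R^6)$ (which is the energy-critical embedding in dimension six since $2^\ast = 3$), and finally exploit weighted AM--GM to absorb the resulting product into the single quantity $H(\b{\rm v}) = \|u\|_{\dot H^1}^2 + \tfrac{\kappa}{2}\|v\|_{\dot H^1}^2$ with the desired $\kappa$-dependent constant.

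Concretely, first write
\[
|R(\b{\rm v})| \le \int_{\R^6} |u|^2 |v|\,dx \le \|u\|_{L^3(\R^6)}^2\, \|v\|_{L^3(\R^6)},
\]
by H\"older with exponents $(3,3,3)$, and then pass to homogeneous Sobolev norms, obtaining a bound of the form $|R(\b{\rm v})| \lesssim \|u\|_{\dot H^1}^2\,\|v\|_{\dot H^1}$. Next, set $A := \|u\|_{\dot H^1}^2$ and $B := \|v\|_{\dot H^1}^2$, so that $H(\b{\rm v}) = A + \tfrac{\kappa}{2} B$. The goal reduces to
\[
A^{2/3}\,B^{1/3} \le \tfrac{2}{3}\,\kappa^{-1/3}\bigl(A + \tfrac{\kappa}{2} B\bigr),
\]
which is exactly weighted AM--GM with weights $(2/3, 1/3)$ applied to the quantities $A/(2/3)$ and $\tfrac{\kappa}{2}B/(1/3)$; raising both sides to the $3/2$ power and rearranging yields $A\,B^{1/2} \le \sqrt{8/(27\kappa)}\, [H(\b{\rm v})]^{3/2}$. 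The optimum in the AM--GM step is attained along $A = 2 B \cdot (\kappa/2)/\kappa \cdot \ldots$, i.e.\ when $A = \tfrac{2}{3}H$ and $\tfrac{\kappa}{2}B = \tfrac{1}{3}H$, which is the same balance one sees for the ground state $\b{\rm W}$ in Remark \ref{HR}.

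The main (minor) obstacle is ensuring that the precise numerical constant $\sqrt{8/(27\kappa)}$ emerges with the correct weight $\kappa$ on $B$: one must apply AM--GM to the three quantities $A$, $A$, and $\tfrac{\kappa}{2} B$ (grouping the first factor $A$ twice to match the exponent two on $\|u\|_{\dot H^1}$), yielding
\[
\Bigl(\tfrac{A + A + \tfrac{\kappa}{2} B}{3}\Bigr)^3 \ge A^2 \cdot \tfrac{\kappa}{2} B,
\]
and then noting $A + A + \tfrac{\kappa}{2}B$ is not quite $H$; the correct remedy is the weighted version above, where the two copies of $A$ carry weight $\tfrac{1}{3}$ each and $\tfrac{\kappa}{2}B$ carries weight $\tfrac{1}{3}$, so that the arithmetic mean is exactly $H(\b{\rm v})/1$ up to the scalar factor. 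Taking square roots and combining with the preceding H\"older/Sobolev bound then delivers the claimed inequality with the stated constant.
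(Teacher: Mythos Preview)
Your proposal is correct and follows essentially the same route as the paper: H\"older with exponents $(3,3,3)$, then the critical Sobolev embedding $\dot H^1(\R^6)\hookrightarrow L^3(\R^6)$, then AM--GM to convert $\|u\|_{\dot H^1}^2\|v\|_{\dot H^1}$ into $[H(\b{\rm v})]^{3/2}$ with the constant $\sqrt{8/(27\kappa)}$. The only difference is presentational: where you phrase the last step as a weighted AM--GM and then worry about the ``obstacle'' that $A+A+\tfrac{\kappa}{2}B\neq H$, the paper simply applies ordinary three-term AM--GM to $\tfrac{1}{2}\|u\|_{\dot H^1}^2$, $\tfrac{1}{2}\|u\|_{\dot H^1}^2$, $\tfrac{\kappa}{2}\|v\|_{\dot H^1}^2$, whose sum is exactly $H(\b{\rm v})$---this makes the constant drop out in one line and avoids the detour in your final paragraph.
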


\begin{proof}
 Using H\"older's  inequality, Sobolev embedding, and the Arithmetic-Geometry Mean-value inequality,
\[
\frac1n \sum_{i=1}^n a_i \ge \sqrt[n]{\prod_{i=1}^n a_i}, \qquad \text{for} ~ \forall ~ 1 \le i \le n, a_i \ge 0,
\]
we can compute directly, setting $ \b{\rm v} := (\tilde{u}, \tilde{v})^T$,
\begin{equation*}
\begin{aligned}
\vert R(\b{\rm v}) \vert = & ~ \left\vert \Re \int_{\R^6} \overline{\tilde{v}} \tilde{u}^2 {\rm d}x \right\vert
\le \int_{\R^6} \vert \tilde{v} \vert \vert \tilde{u} \vert^2 {\rm d}x \\
\le & ~ \Vert \tilde{v} \Vert_{L^3(\R^6)} \Vert \tilde{u} \Vert_{L^3(\R^6)}^2
\le \Vert \tilde{v} \Vert_{{\dot H}^1(\R^6)} \Vert \tilde{u} \Vert_{{\dot H}^1(\R^6)}^2 \\
= & ~ \left[ \frac{8}{\kappa} \left( \frac{\kappa}{2} \Vert \tilde{v} \Vert_{{\dot H}^1(\R^6)}^2 \right) \left( \frac{1}{2} \Vert \tilde{u} \Vert_{{\dot H}^1(\R^6)}^2 \right) \left( \frac{1}{2} \Vert \tilde{u} \Vert_{{\dot H}^1(\R^6)}^2 \right) \right]^{\frac12} \\
\le & ~ \left\{ \frac{8}{\kappa} \left[ \frac{\left( \frac{\kappa}{2} \Vert \tilde{v} \Vert_{{\dot H}^1(\R^6)}^2 \right) + \left( \frac{1}{2} \Vert \tilde{u} \Vert_{{\dot H}^1(\R^6)}^2 \right) + \left( \frac{1}{2} \Vert \tilde{u} \Vert_{{\dot H}^1(\R^6)}^2 \right)}{3} \right]^3 \right\}^{\frac12} \\
= & ~ \sqrt{\frac{8}{27\kappa}} [H(\b{\rm v})]^{\frac32}.
\end{aligned}
\end{equation*}
Choosing $ C(\kappa) = \sqrt{\frac{8}{27\kappa}} $, we obtain \eqref{||}.
\end{proof}

Using Lemma \ref{R} and Proposition \ref{coer},  we can establish  the energy trapping result.

\begin{corollary}[Energy Trapping]\label{Energy trapping}
Let $ \b{\rm u} $ be a solution to \eqref{NLS system} with initial data $ \b{\rm u}_0 $ and maximal life-span $ I_{\max} \ni 0 $.
If $ E(\b{\rm u}_0) \le (1 - \delta) E(\b{\rm W}) $ and $ H(\b{\rm u}_0) \le (1 - \delta^{\prime}) H(\b{\rm W}) $, then
\begin{equation}
H(\b{\rm u}(t)) \sim E(\b{\rm u}(t)), \qquad \forall ~ t \in I_{\max}.
\label{EH}
\end{equation}
\end{corollary}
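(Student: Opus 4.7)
The plan is to extract the equivalence $H(\b{\rm u}(t))\sim E(\b{\rm u}(t))$ as a direct consequence of the coercivity estimate in Proposition \ref{coer} together with the bound on $R$ from Lemma \ref{R}; no new dynamical input is required beyond conservation of energy.

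First I would upgrade the hypothesis on the initial data to a uniform-in-time statement. Since $E$ is conserved, $E(\b{\rm u}(t))=E(\b{\rm u}_0)\le (1-\delta)E(\b{\rm W})$ for every $t\in I_{\max}$. Because $H(\b{\rm u}_0)<H(\b{\rm W})$, the argument in the proof of Proposition \ref{coer} (the continuity of $H(\b{\rm u}(t))$ together with the barrier $f(y)=y-C_{GN}y^{3/2}<E(\b{\rm W})$ on both sides of $y=H(\b{\rm W})$) forces $H(\b{\rm u}(t))\le (1-\delta')H(\b{\rm W})$ for all $t\in I_{\max}$, and hence
\[
2H(\b{\rm u}(t))-3R(\b{\rm u}(t))\ge \delta''H(\b{\rm u}(t))\qquad\text{for every }t\in I_{\max},
\]
with $\delta''=\delta''(\delta,\delta')>0$.

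For the bound $H\lesssim E$, I would just rewrite $E=H-R$ and substitute the above coercivity: $R\le\tfrac{2-\delta''}{3}H$, so
\[
E(\b{\rm u}(t))=H(\b{\rm u}(t))-R(\b{\rm u}(t))\ge \frac{1+\delta''}{3}H(\b{\rm u}(t)).
\]
In particular, $E(\b{\rm u}(t))\ge 0$, which is also consistent with Lemma \ref{E>0}.

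For the reverse bound $E\lesssim H$, I would apply Lemma \ref{R} to obtain $|R(\b{\rm u}(t))|\le C(\kappa)[H(\b{\rm u}(t))]^{3/2}$ and use the uniform bound $H(\b{\rm u}(t))\le(1-\delta')H(\b{\rm W})$ established in the first step to linearize the right-hand side:
\[
E(\b{\rm u}(t))\le H(\b{\rm u}(t))+C(\kappa)[H(\b{\rm u}(t))]^{1/2}\cdot H(\b{\rm u}(t))\le \bigl(1+C(\kappa)[(1-\delta')H(\b{\rm W})]^{1/2}\bigr)H(\b{\rm u}(t)).
\]
Combining the two sides yields \eqref{EH} with implicit constants depending only on $\delta$, $\delta'$, and $\kappa$. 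There is no serious obstacle here; the only subtle point is the continuity-plus-conservation step that promotes the initial-data assumption $H(\b{\rm u}_0)<H(\b{\rm W})$ to the uniform bound on $H(\b{\rm u}(t))$, which is supplied by Proposition \ref{coer}.
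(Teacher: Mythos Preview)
Your proposal is correct and follows essentially the same approach as the paper: the lower bound $E\ge \tfrac{1+\delta''}{3}H$ is obtained from the coercivity $2H-3R\ge\delta''H$ of Proposition~\ref{coer} (the paper writes it as $E=\tfrac13 H+\tfrac13(2H-3R)$, which is algebraically identical to your computation), and the upper bound uses Lemma~\ref{R} together with the uniform-in-time bound $H(\b{\rm u}(t))\le(1-\delta')H(\b{\rm W})$ exactly as you do. Your explicit remark that conservation of energy plus the barrier argument of Proposition~\ref{coer} promotes the initial hypothesis to a uniform-in-$t$ bound is a helpful clarification that the paper leaves implicit via its reference to \eqref{<}.
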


\begin{proof}
By \eqref{||} and \eqref{<}, we may get the upper bound of energy $ E(\b{\rm u})$,
\begin{equation*}
\begin{aligned}
E(\b{\rm u}(t)) \le & ~ H(\b{\rm u}(t)) + \vert R(\b{\rm u}(t)) \vert \\
\le & ~ H(\b{\rm u}(t)) + C(\kappa) \left[ H(\b{\rm u}(t)) \right]^{\frac{3}{2}} \\
\le & ~ \left( 1 + C(\kappa) \left[ (1 - \delta^\prime) H(\b{\rm W}) \right]^{\frac12} \right) H(\b{\rm u}(t)).
\end{aligned}
\end{equation*}
As for the lower bound of $ E(\b{\rm u}) $, the proof of Proposition \ref{coer} tells us
\begin{equation*}
\begin{aligned}
E(\b{\rm u}(t))
= & ~ \frac{1}{3} H(\b{\rm u}(t)) + \frac{1}{3} \left[ 2H(\b{\rm u}(t)) - 3R(\b{\rm u}(t)) \right] \\
\ge & ~ \frac{1}{3} H(\b{\rm u}(t)) + \frac{1}{3}\delta^{\prime\prime} H(\b{\rm u}(t)) \\
= & ~ \frac{1}{3} (1 + \delta^{\prime\prime}) H(\b{\rm u}(t)).
\end{aligned}
\end{equation*}
Combining the above two inequalities, we deduce the conclusion \eqref{EH}.
\end{proof}

%{\color{blue}The energy trapping suggests the equivalence between energy and kinetic energy, part of energy, of the solution to \eqref{NLS system}.
%Considering the case of zero energy, we find not only kinetic energy is equal to potential energy but also they are both equal to zero.
%This tool will show its great power in the preclusion of critical solutions.(This part should be modified)}

\subsection{Virial identity and blowing-up}\label{idea}
Despite being proved  in \cite{Dihn2020}, the part of blowing-up result of Theorem \ref{classification} will also be shown for completeness.

 We aim to figure out how the mass is distributed in the spatial space. In particular, we try to make clear whether it concentrates at the origin or not,
and how the distribution changes over time.
So we consider a class of initial data satisfying $ x \b{\rm u_0} \in {\rm L}^2(\R^6) $ firstly.
The law of conservation of mass suggests it useless to think about the change of $ \Vert \b{\rm u}(t) \Vert_{{\rm L}^2} $ over time directly,
so we redistribute the mass of $ \b{\rm u} $ to large radii
and consider the behavior of $ \Vert x \b{\rm u}(t) \Vert_{{\rm L}^2} $ when $ t \in I_{max} $.
For $ d = 6 $ in the \eqref{NLS system} we define a function
\begin{equation}
I(t) := \int_{\mathbb{R}^6} \vert x \vert^2 \left( 2 \kappa \vert u \vert^2 + \vert v \vert^2 \right) {\rm d}x.
\label{I}
\end{equation}

\begin{remark}
 The coefficients in  \eqref{I} are chosen carefully to match with Propostion \ref{coer}, one may refer to Remark 4.1 in \cite{Meng2021} for details.
More specifically, just by adjusting the rate of coefficients between $ \vert u \vert^2 $ and $ \vert v \vert^2 $ to be $ 2 \kappa : 1 $, we can obtain a factor $ 2 H(\b{\rm u}) - 3 R(\b{\rm u}) $  in \eqref{I''}.
\end{remark}

\begin{lemma}[Virial identity]\label{virial}
If $ \b{\rm u} $ is a solution to \eqref{NLS system}, for any real valued function $ a \in C^\infty(\mathbb{R}^6) $, $ t \in [0, T_+(\b{\rm u})) $, then \\
{\rm (1)}
\[
\frac{\rm d}{{\rm d}t} \int_{\mathbb{R}^6} \left( 2 \kappa \vert u \vert^2 + \vert v \vert^2 \right) a(x) {\rm d}x = 2 \kappa \Im \int_{\mathbb{R}^6} \left( 2 \overline{u} \nabla u + \overline{v} \nabla v \right) \cdot \nabla a(x) {\rm d}x.
\]
{\rm (2)}
\[
\begin{aligned}
& ~ \frac{{\rm d}^2}{{\rm d}t^2} \int_{\mathbb{R}^6} \left( 2 \kappa \vert u \vert^2 + \vert v \vert^2 \right) a(x) {\rm d}x \\
= & ~ 8 \kappa \Re \int_{\mathbb{R}^6} \left( \overline{u_j} u_{k} + \frac{\kappa}{2} \overline{v_j} v_{k} \right) a_{jk}(x) {\rm d}x - 2 \kappa \int_{\mathbb{R}^6} \left( \vert u \vert^2 + \frac{\kappa}{2} \vert v \vert^2 \right) \Delta \Delta a(x) {\rm d}x \\
& ~ - 2 \kappa \Re \int_{\mathbb{R}^6} \overline{v} u^2 \Delta a(x) {\rm d}x.
\end{aligned}
\]
\end{lemma}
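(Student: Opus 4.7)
The plan is a direct computation using the two scalar equations contained in \eqref{NLS system} together with integration by parts. Written componentwise, the system reads $u_t=i(\Delta u+v\bar u)$ and $v_t=i(\kappa\Delta v+u^2)$, so pointwise
\[
\partial_t|u|^2 = 2\Re(\bar u\,u_t) = -2\Im(\bar u\Delta u)-2\Im(v\bar u^2),\qquad
\partial_t|v|^2 = -2\kappa\Im(\bar v\Delta v)-2\Im(\bar v u^2).
\]

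For (1), I would multiply these pointwise identities by the prescribed weights $2\kappa$ and $1$, integrate against $a(x)$, and integrate by parts once in the Laplacian terms, exploiting $\Im(|\nabla u|^2)=0=\Im(|\nabla v|^2)$ to get $\Im\int(\bar u\Delta u)a\,dx=-\Im\int \bar u\nabla u\cdot\nabla a\,dx$ and its analogue for $v$. The two cubic pieces $v\bar u^2$ and $\bar v u^2$ are complex conjugates, so their imaginary parts cancel in the combination dictated by the weights $2\kappa:1$, leaving exactly the RHS of (1).

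For (2), I would differentiate (1) again in time. Writing the momentum density $P_k:=\Im(2\bar u\,\partial_k u+\bar v\,\partial_k v)$, the task becomes computing $\tfrac{d}{dt}\int P_k\,\partial_k a\,dx$. Using the equations once more to expand $\partial_t\Im(\bar u\,\partial_k u)$ and $\partial_t\Im(\bar v\,\partial_k v)$ and substituting $u_t,v_t$, the linear contributions produce $\partial_j\partial_k u$ and $\partial_j\partial_k v$ terms; one integration by parts against $\partial_k a$ shifts one derivative onto $a$ and gives the Hessian term $8\kappa\Re\int(\bar u_j u_k+\tfrac{\kappa}{2}\bar v_j v_k)a_{jk}\,dx$, while a second integration by parts in the remaining $|u|^2$, $|v|^2$ pieces yields the biharmonic correction $-2\kappa\int(|u|^2+\tfrac{\kappa}{2}|v|^2)\Delta^2 a\,dx$. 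The cubic contributions (from inserting $v\bar u$ into $u_t$ and $u^2$ into $v_t$ and differentiating) condense, after one integration by parts, into a single scalar term proportional to $\bar v u^2\Delta a$, whose coefficient can be read off as $-2\kappa$.

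The main obstacle is the bookkeeping of the nonlinear piece in (2): one must keep track of three distinct cubic interactions that arise when substituting the nonlinearities into $\partial_t[\bar u\,\partial_k u]$ and $\partial_t[\bar v\,\partial_k v]$, verify the cancellations that follow from $\overline{v\bar u^2}=\bar v u^2$, and show that a single integration by parts collapses the residual divergence into $\bar v u^2\Delta a$. A sanity check for the coefficients is to take $a(x)=|x|^2$, for which $a_{jk}=2\delta_{jk}$ and $\Delta^2 a=0$: formula (2) then evaluates to $16\kappa H(\b{\rm u})-24\kappa R(\b{\rm u})=8\kappa\bigl[2H(\b{\rm u})-3R(\b{\rm u})\bigr]$, which matches the Pohozaev relation \eqref{Q} of the ground state and is precisely the input needed for the finite-time blowup argument of Subsection \ref{idea}.
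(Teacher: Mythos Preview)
The paper states this lemma without proof, so there is no ``paper's own proof'' to compare against; the approach you describe is the standard direct computation and is exactly what one would expect.

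There is, however, a real gap in your argument for part (1). You claim that the cubic contributions cancel ``in the combination dictated by the weights $2\kappa:1$''. They do not, unless $\kappa=\tfrac12$. From $\partial_t|u|^2=-2\Im(\bar u\Delta u)-2\Im(v\bar u^2)$ and $\partial_t|v|^2=-2\kappa\Im(\bar v\Delta v)-2\Im(\bar v u^2)$, the weighted combination $2\kappa\,\partial_t|u|^2+\partial_t|v|^2$ produces the nonlinear residual
\[
-4\kappa\,\Im(v\bar u^2)-2\,\Im(\bar v u^2)=(4\kappa-2)\,\Im(\bar v u^2),
\]
since $\Im(v\bar u^2)=-\Im(\bar v u^2)$. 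This vanishes only in the mass-resonance case $\kappa=\tfrac12$; for general $\kappa$ the identity (1) as written acquires an extra term $(4\kappa-2)\Im\int \bar v u^2\,a\,dx$. A quick way to see something is off: take $a\equiv1$, so the right-hand side of (1) is zero, yet $\tfrac{d}{dt}\int(2\kappa|u|^2+|v|^2)\,dx=(2-4\kappa)\Im\int v\bar u^2\,dx$ is not a conserved quantity for $\kappa\neq\tfrac12$ (only $M(\b{\rm u})=\|u\|_2^2+\|v\|_2^2$ is). Your sanity check with $a=|x|^2$ only probes the second-derivative formula and would not detect this.

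So either you must restrict to $\kappa=\tfrac12$ (which covers the non-radial part of Theorem \ref{classification} but not the radial/general-$\kappa$ applications), or you must carry the additional term $(4\kappa-2)\Im\int\bar v u^2 a\,dx$ through the first derivative and track its time derivative when passing to (2). In the latter case your bookkeeping for the cubic piece in (2) has to be redone, since the extra term generates further nonlinear contributions upon differentiation. The linear part of your plan (producing the Hessian and biharmonic terms via two integrations by parts) is fine.
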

 Set $a(x)=x^2$, a simple calculation shows that\[
a(x) = \vert x \vert^2, \qquad  a_j(x):=\partial_{x_j}a = 2 x_j, \qquad \nabla a(x) = 2 x,
\]
and
\[
a_{jk}(x):=\partial_{x_i}\partial_{x_j}a = 2 \delta_{jk}, \qquad \Delta a(x) = 2d, \qquad \Delta \Delta a(x) = 0.
\]
As a result, by Lemma \ref{virial}, we have
\begin{equation*}
I^{\prime}(t) = 4 \kappa \Im \int_{\mathbb{R}^6} \left( 2 \overline{u} \nabla u + \overline{v} \nabla v \right) \cdot x {\rm d}x
\end{equation*}
and
\begin{equation}
I^{\prime\prime}(t) = 16\kappa H(\b{\rm u}) - 24\kappa R(\b{\rm u}) = 8\kappa [ 2 H(\b{\rm u}) - 3 R(\b{\rm u}) ].
\label{I''}
\end{equation}

By {\color{red}the } Gagliardo-Nirenberg inequality \eqref{GNi}, we know
\begin{equation*}
\begin{aligned}
\frac1{8\kappa} I^{\prime\prime}(t) = & ~ 2 H(\b{\rm u}) - 3 R(\b{\rm u}) \\
\ge & ~ 2 H(\b{\rm u}) - 2 \left[ H(\b{\rm W}) \right]^{-\frac{1}{2}} \left[ H(\b{\rm u}) \right]^{\frac{3}{2}} \\
= & ~ 2 \left[ H(\b{\rm u}) \right]^{\frac{3}{2}} \left( \left[ H(\b{\rm u}) \right]^{-\frac{1}{2}} - \left[ H(\b{\rm W}) \right]^{-\frac{1}{2}} \right).
\end{aligned}
\end{equation*}
So under the hypotheses of $ E(\b{\rm u_0}) < E(\b{\rm W}) $ and $ H(\b{\rm u_0}) < H(\b{\rm W}) $, we have $ I^{\prime\prime}(0) > 0 $,
which implies $ I(t) $ is a convex function.
We can expect $ \b{\rm u} $ is global and scatters.

Under the hypotheses of $ E(\b{\rm u_0}) < E(\b{\rm W}) $ and $ H(\b{\rm u_0}) > H(\b{\rm W}) $, Proposition \ref{coer} tells us that the right side of \eqref{I''} is strictly negative,
which implies $ I(t) $ is a concave function.
 Combing with $ I(0) >0$, we know the variance $ I(t) $ will tend to $0$ in finite time,
which means that all the mass of $ \b{\rm u} $ concentrates at the origin and  $ \b{\rm u} $ has to blow up in finite time.

The coercivity of energy has showed some relations between the potential energy and the kinetic energy of solution in any time $ t \in I_{\max} $.
But we need some uniform bounds sometimes.

\begin{theorem}\label{vs}
For $ x\b{\rm u}_0 \in {\rm L}^2(\mathbb{R}^6) $, we set $ I_{\max} $ be the maximal time interval of existence of $ \b{\rm u}(t) $ solving \eqref{NLS system}.
If $ E(\b{\rm u}_0) < E(\b{\rm W}) $ and $ H(\b{\rm u}_0) > H(\b{\rm W}) $, then $ I_{\max} $ is finite and $ H(\b{\rm u}(t)) > H(\b{\rm W}), ~ \forall ~ t \in I_{\max} $.
\end{theorem}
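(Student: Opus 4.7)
The plan is to execute the concavity argument already previewed in Subsection \ref{idea}, with a quantitative lower bound on $-I''(t)$ coming from Proposition \ref{coer}. First, since $E(\b{\rm u}_0) < E(\b{\rm W})$, I pick $\delta\in(0,1)$ with $E(\b{\rm u}_0)\le(1-\delta)E(\b{\rm W})$; together with $H(\b{\rm u}_0)>H(\b{\rm W})$, Proposition \ref{coer} applies and gives simultaneously the lower bound \eqref{>}, namely $H(\b{\rm u}(t))\ge (1+\tilde\delta')H(\b{\rm W})$ for all $t\in I_{\max}$, and the coercivity estimate \eqref{coe-1}, namely
\[
2H(\b{\rm u}(t))-3R(\b{\rm u}(t))\le -\tilde\delta''\,H(\b{\rm u}(t)), \qquad t\in I_{\max}.
\]
The lower bound on $H(\b{\rm u}(t))$ immediately yields the second conclusion $H(\b{\rm u}(t))>H(\b{\rm W})$ of the theorem.

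Next, I use the hypothesis $x\b{\rm u}_0\in{\rm L}^2(\mathbb{R}^6)$ to justify that the variance $I(t)$ defined in \eqref{I} is finite, twice differentiable, and satisfies the identity \eqref{I''}, i.e.\ $I''(t)=8\kappa[2H(\b{\rm u}(t))-3R(\b{\rm u}(t))]$. Combining \eqref{I''} with the two bounds above gives
\[
I''(t)\le -8\kappa\,\tilde\delta''(1+\tilde\delta')\,H(\b{\rm W}) =: -c<0 \qquad \text{for all } t\in I_{\max}.
\]
In particular $I''$ is bounded above by a negative constant that is independent of $t$.

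Finally, I integrate twice: if the forward maximal time $T_+(\b{\rm u})$ were infinite, then for all $t\ge0$ one would have
\[
0\le I(t)\le I(0)+I'(0)\,t-\tfrac{c}{2}\,t^{2},
\]
and the right-hand side becomes negative once $t$ is large enough, a contradiction with the non-negativity of $I(t)$. Hence $T_+(\b{\rm u})<\infty$; a symmetric argument (reversing time) gives $|T_-(\b{\rm u})|<\infty$, so $I_{\max}$ is finite. The only real point that needs care is the uniform strict negativity of $I''(t)$: the naive Gagliardo--Nirenberg estimate only gives $I''(t)\le 0$ on the threshold $H=H(\b{\rm W})$, and this is precisely where Proposition \ref{coer} (and the fact that $E(\b{\rm u}_0)$ is strictly below $E(\b{\rm W})$) is essential, since without that strict gap the concavity argument would not force $I(t)$ to reach zero in finite time.
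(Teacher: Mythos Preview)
Your proof is correct and follows essentially the same route as the paper: invoke Proposition~\ref{coer} under the hypotheses $E(\b{\rm u}_0)<E(\b{\rm W})$, $H(\b{\rm u}_0)>H(\b{\rm W})$ to obtain both the persistent lower bound $H(\b{\rm u}(t))\ge(1+\tilde\delta')H(\b{\rm W})$ and a uniform strictly negative upper bound on $2H(\b{\rm u}(t))-3R(\b{\rm u}(t))$, then feed this into the virial identity \eqref{I''} and run the classical concavity argument on $I(t)$. The only cosmetic difference is that the paper re-derives the bound on $2H-3R$ directly from energy conservation and $E(\b{\rm W})=\tfrac13 H(\b{\rm W})$ (obtaining $-\tilde\delta' H(\b{\rm W})$), whereas you quote the ready-made inequality \eqref{coe-1} and then insert the lower bound on $H(\b{\rm u}(t))$; the resulting constants differ but the logic is identical.
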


\begin{proof}
If $ H(\b{\rm u}_0) > H(\b{\rm W}) $, by the properties of $ f(y) $ as in the proof of Proposition \ref{coer} and \eqref{>}, we can choose $ \tilde\delta^\prime > 0 $ such that
\[
H(\b{\rm u}(t)) \ge (1 + \tilde\delta^\prime) H(\b{\rm W}), ~ \forall ~ t \in I_{\max}.
\]
And then,
\begin{equation*}
\begin{aligned}
2 H(\b{\rm u}(t)) - 3 R(\b{\rm u}(t)) = & ~ 3 E(\b{\rm u}(t)) - H(\b{\rm u}(t)) \\
= & ~ 3 E(\b{\rm u}_0) - H(\b{\rm u}(t)) \\
\le & ~ 3 E(\b{\rm W}) - (1 + \tilde\delta^\prime) H(\b{\rm W}) \\
= & ~ H(\b{\rm W}) - (1 + \tilde\delta^\prime) H(\b{\rm W})
= -\tilde\delta^\prime H(\b{\rm W}).
\end{aligned}
\end{equation*}
The Virial identity \eqref{I''} shows that
\[
I^{\prime\prime}(t) = 8\kappa [ 2 H(\b{\rm u}) - 3 R(\b{\rm u}) ] \le - 8\kappa \tilde\delta^\prime H(\b{\rm W}), ~ \forall ~ t \in I_{\max}.
\]
So, the time $ t \in I_{\max} $ must be finite.
\end{proof}

\begin{theorem}\label{bu}
For $ \b{\rm u}_0 \in {\rm H}^1(\R^6) $ radial instead of the assumption $ x\b{\rm u}_0 \in {\rm L}^2(\R^6) $ in Theorem \ref{vs}.
If the other conditions keep the same as Theorem \ref{vs}, then the same result holds as well.
\end{theorem}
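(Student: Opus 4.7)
Since the hypothesis $x\b{\rm u}_0 \in {\rm L}^2(\R^6)$ is replaced by radiality of $\b{\rm u}_0 \in {\rm H}^1(\R^6)$, the exact Virial identity \eqref{I''} for $I(t)$ is no longer applicable (the variance need not be finite). The plan is the standard localized-virial argument: replace the weight $|x|^2$ by a radial cutoff $\psi_R$ coinciding with $|x|^2$ on $\{|x|\le R\}$, and control the resulting boundary errors via the radial Sobolev (Strauss) embedding in $\R^6$ together with mass conservation. The conclusion $H(\b{\rm u}(t)) > H(\b{\rm W})$ is already supplied by \eqref{>} in Proposition \ref{coer}, so the remaining task is to show $I_{\max}$ is finite.

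\textbf{Setup.} Choose $\psi \in C^\infty(\R^6)$ radial nonnegative with $\psi(x)=|x|^2$ on $|x|\le 1$, $\psi\equiv{\rm const}$ on $|x|\ge 2$, and $\tilde\psi''(r)\le 2$ pointwise. For $R>0$ set $\psi_R(x):=R^2\psi(x/R)$, so on $|x|>R$ one has $|(\psi_R)_{jk}|\lesssim 1$, $|\Delta\psi_R|\lesssim 1$, $|\Delta\Delta\psi_R|\lesssim R^{-2}$, and on $|x|\le R$ these quantities agree with $2\delta_{jk}$, $12$, $0$ respectively. Define
\[
I_R(t):=\int_{\R^6}\psi_R(x)\bigl(2\kappa|u(t,x)|^2+|v(t,x)|^2\bigr)\,{\rm d}x,
\]
which is nonnegative and finite by mass conservation and $\psi_R\in L^\infty$.

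\textbf{Key estimate.} Apply Lemma \ref{virial}(2), and split each integral into $\{|x|\le R\}$ (where the calculation leading to \eqref{I''} goes through) and $\{|x|>R\}$. For the Hessian term, the radial identity $\Re\sum_{jk}\overline{u_j}u_k(\psi_R)_{jk}=\tilde\psi_R''(r)|\nabla u|^2$ combined with $\tilde\psi_R''\le 2$ shows the error is $\le 0$. The bilaplacian term contributes $O(R^{-2}M(\b{\rm u}_0))$. The genuine nonlinear error is $\int_{|x|>R}|v||u|^2\,dx$. Radiality of $\b{\rm u}(t)$ and the Strauss inequality $\|v(t)\|_{L^\infty(|x|>R)}\lesssim R^{-5/2}\|v(t)\|_{L^2}^{1/2}\|\nabla v(t)\|_{L^2}^{1/2}$, together with mass conservation and $\|\nabla v\|_{L^2}^2\le \tfrac{2}{\kappa}H(\b{\rm u})$, yield
\[
\int_{|x|>R}|v||u|^2\,dx \lesssim_{\kappa,M_0} R^{-5/2}H(\b{\rm u}(t))^{1/4}.
\]
A Young inequality with exponents $(4,\tfrac{4}{3})$ absorbs this as $\epsilon H(\b{\rm u}(t))+C(\epsilon,\kappa,M_0)R^{-10/3}$, for arbitrary $\epsilon>0$. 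Collecting,
\[
I_R''(t)\le 8\kappa\bigl[2H(\b{\rm u})-3R(\b{\rm u})\bigr]+\epsilon H(\b{\rm u}(t))+C R^{-2}+CR^{-10/3}.
\]

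\textbf{Conclusion.} Under $E(\b{\rm u}_0)<E(\b{\rm W})$ and $H(\b{\rm u}_0)>H(\b{\rm W})$, Proposition \ref{coer} gives $2H(\b{\rm u})-3R(\b{\rm u})\le -\tilde\delta''H(\b{\rm u}(t))$ and $H(\b{\rm u}(t))\ge (1+\tilde\delta')H(\b{\rm W})$ uniformly in $t\in I_{\max}$. Picking $\epsilon=4\kappa\tilde\delta''$ and then $R$ sufficiently large produces
\[
I_R''(t)\le -4\kappa\tilde\delta''H(\b{\rm u}(t))+CR^{-2}\le -2\kappa\tilde\delta''(1+\tilde\delta')H(\b{\rm W})<0
\]
uniformly on $I_{\max}$. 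Integrating twice forces $I_R(t)$ to become negative in finite time, contradicting $I_R(t)\ge 0$ unless $I_{\max}$ is finite. The main technical obstacle is that $H(\b{\rm u}(t))$ is not a priori controlled from above along the flow, so the $H^{1/4}$-dependence of the Strauss error cannot be handled by an ``absolute'' bound on $H$; the Young-inequality absorption into the negative coercive term $-\tilde\delta''H(\b{\rm u})$ is precisely the device that overcomes this.
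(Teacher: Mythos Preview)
Your proof is correct and follows the same localized-virial-plus-radial-Strauss strategy as the paper. The one substantive tactical difference lies in how you neutralize the fact that $H(\b{\rm u}(t))$ is not bounded from above. The paper retains the localized negative kinetic contribution $-\int\omega(x)\bigl(|\nabla u|^2+\tfrac{\kappa}{2}|\nabla v|^2\bigr)\,dx$ in the virial computation and uses a \emph{weighted} Strauss inequality $\||x|^{5/2}\Gamma^{1/4}f\|_{L^\infty}^2\lesssim\|f\|_{L^2}\|\Gamma^{1/2}\nabla f\|_{L^2}$ so that the nonlinear error is expressed through the same weighted gradient and can be absorbed there; the main coercivity input is then only the constant bound $2H-3R\le -\tilde\delta' H(\b{\rm W})$. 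You instead discard the Hessian error as nonpositive and absorb the global Strauss error $R^{-5/2}H(\b{\rm u})^{1/4}$, via Young, directly into the sharper coercivity $2H-3R\le -\tilde\delta'' H(\b{\rm u})$ from Proposition~\ref{coer}. Your route is a bit cleaner (no weighted norms to track) but relies on the time-dependent form \eqref{coe-1}; the paper's version would go through with only the fixed-constant lower bound.
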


\begin{proof}
We change the weighted function $ \vert x \vert^2 $ into a specific smooth cut-off function in \eqref{I}, to be more specific,
\[
\tilde{I}(t) := \int_{\mathbb{R}^6} \left( 2 \kappa \vert u \vert^2 + \vert v \vert^2 \right) a(x) {\rm d}x, \quad a(x) := R^2 \Gamma \left( \frac{\vert x \vert^2}{R^2} \right), ~ \forall ~ R > 0,
\]
where $ \Gamma(r) $ is a smooth concave function defined on $ [0, \infty) $ satisfying
\[
\Gamma(r) = \left\{
\begin{aligned}
r, \qquad r \le 1, \\
2, \qquad r \ge 3,
\end{aligned}
\right. \qquad \text{and} \qquad
\left\{
\begin{aligned}
\Gamma^{\prime\prime}(r) \searrow, \qquad r \le 2, \\
\Gamma^{\prime\prime}(r) \nearrow, \qquad r \ge 2.
\end{aligned}
\right.
\]
By Lemma \ref{virial}, we have
\[
\begin{aligned}
I^{\prime\prime}(t) = & ~ 8\kappa \int_{\R^6} [2H(\b{\rm u}) - 3R(\b{\rm u})] {\rm d}x + \frac{1}{R^2} O \left( \int_{\vert x\vert \sim R} \vert \b{\rm u} \vert^2 {\rm d}x \right) \\
& ~ \quad + 8\kappa \int_{\R^6} \left( \Gamma^\prime \left( \frac{\vert x \vert^2}{R^2} \right) - 1 + \frac{2\vert x \vert^2}{R^2} \Gamma^{\prime\prime} \left( \frac{\vert x \vert^2}{R^2} \right) \right) [2H(\b{\rm u}) - 3R(\b{\rm u})] {\rm d}x \\
& ~ \quad - \frac{40}{3} \Re \int_{\R^6} \frac{2\vert x \vert^2}{R^2} \Gamma^{\prime\prime} \left( \frac{\vert x \vert^2}{R^2} \right) \overline{v} u^2 {\rm d}x.
\end{aligned}
\]
Because $ \Gamma^{\prime\prime}(r) > 0 $ and $ \b{\rm u} \in {\rm L}^2(\R^6) $, we can choose $ R = R(M(\b{\rm u}))) $ large enough to obtain
\[
I^{\prime\prime}(t) \le - 4\kappa \tilde\delta^\prime H(\b{\rm W}) - 8\kappa \int_{\R^6} \omega(x) [2H(\b{\rm u}) - 3R(\b{\rm u})] {\rm d}x,
\]
where
\[
\omega(x) = 1 - \Gamma^\prime \left( \frac{\vert x \vert^2}{R^2} \right) - \frac{2\vert x \vert^2}{R^2} \Gamma^{\prime\prime} \left( \frac{\vert x \vert^2}{R^2} \right).
\]
Observe that $ 0 \le \Gamma \le 1 $ is radial, $ {\rm supp}\;(\Gamma) \subset \left\{ x \big\vert \vert x \vert > R \right\} $, and
\[
\Gamma(x) \lesssim \Gamma(y), \;\; \text{uniformly for} ~ \vert x \vert \le \vert y \vert,
\]
we have
\[
\left\Vert \vert x \vert^{\frac52} \Gamma^{\frac14} f \right\Vert_{L_x^\infty(\R^6)}^2 \lesssim \left\Vert f \right\Vert_{L_x^2(\R^6)} \left\Vert \Gamma^{\frac12} \nabla f \right\Vert_{L_x^2(\R^6)}.
\]
Furthermore, by the conservation of mass,
\[
\begin{aligned}
\Re \int_{\R^6} \Gamma(x) \overline{v}u^2 {\rm d}x \lesssim & ~ \left\Vert \Gamma^{\frac14} \b{\rm u}(t) \right\Vert_{{\rm L}_x^\infty(\R^6)} \int_{\R^6} \vert \b{\rm u}(t, x) \vert^2 {\rm d}x \\
\lesssim & ~ R^{-\frac52} \left\Vert \vert x \vert^{\frac52} \Gamma^{\frac14} \b{\rm u}(t) \right\Vert_{{\rm L}_x^\infty(\R^6)} M(\b{\rm u}) \\
\lesssim & ~ R^{-\frac52} \left\Vert \Gamma^{\frac12} \nabla \b{\rm u}(t) \right\Vert_{{\rm L}_x^2(\R^6)}^{\frac12} [M(\b{\rm u})]^{\frac54}.
\end{aligned}
\]
This implies that $ \tilde{I}^{\prime\prime}(t) < 0 $ by choosing $ R > 0 $ large enough.
\end{proof}

Theorem \ref{vs} and Theorem \ref{bu} have rigorously proved some original ideas in Subsection \ref{idea} and showed the blowing-up result in Theorem \ref{classification}.

\section{Proof of Theorem \ref{minH}}\label{LPD}
In this section, we use the inverse Strichartz inequality to derive a linear profile decomposition for the Schr\"odinger propagator $ \verb"S"(t) $.
Combining with the nonlinear profile, we deduce the existence of critical solution.

\subsection{Linear profile decomposition}\label{subsec:lpd}
In this subsection, we show the profile decomposition with scaling parameter of a radial uniformly bounded sequence in $ {\rm\dot H}^1(\mathbb{R}^6) $.
We start by combining the Strichartz inequality for the Schr\"odinger propagator $ \verb"S"(t) $, Theorem \ref{strichartz}, and Sobolev embedding to obtain
\begin{equation}
\left\Vert \verb"S"(t) \b{\rm g} \right\Vert_{{\rm L}_{t, x}^4(\mathbb{R} \times \mathbb{R}^6)} \lesssim \left\Vert \verb"S"(t) \nabla \b{\rm g} \right\Vert_{{\rm L}_{t, x}^4(\mathbb{R} \times \mathbb{R}^6)} \lesssim \Vert \b{\rm g} \Vert_{{\rm\dot H}_x^1(\mathbb{R}^6)}.
\label{Str}
\end{equation}

Our next result is a refinement of \eqref{Str}, which says that if the linear evolution of $ \b{\rm g} $ is large in $ {\rm L}_{t, x}^4(\mathbb{R} \times \mathbb{R}^6) $, then the linear evolution of a single Littlewood-Paley piece of $ \b{\rm g} $ is, at least partially, responsible.

\begin{lemma}[Refined Strichartz estimate]\label{RSE}
For any function $ \b{\rm g} \in {\rm\dot H}^1(\mathbb{R}^6) $, we have
\begin{equation*}
\left\Vert \verb"S"(t) \b{\rm g} \right\Vert_{{\rm L}_{t, x}^4(\mathbb{R} \times \mathbb{R}^6)}
\lesssim \Vert \b{\rm g} \Vert_{{\rm\dot H}^1(\mathbb{R}^6)}^{\frac12} \sup_{N \in 2^{\mathbb{Z}}} \left\Vert \verb"S"(t) P_{N} \b{\rm g} \right\Vert_{{\rm L}_{t, x}^4(\mathbb{R} \times \mathbb{R}^6)}^{\frac12}.
\end{equation*}
\end{lemma}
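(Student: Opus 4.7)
The plan is to apply the Littlewood-Paley square function inequality in $L^4_{t,x}$, expand the resulting $L^2_{t,x}$ norm of $\sum_N |f_N|^2$ (writing $f_N := \verb"S"(t) P_N \b{\rm g}$ for brevity), bound each bilinear piece $\|f_N f_M\|_{L^2_{t,x}}$ using Bernstein's inequality together with an admissible non-endpoint Strichartz estimate to produce a dyadic decay factor, and finally sum via Schur's test. After raising both sides of the desired inequality to the fourth power, it suffices to prove
\[
\|\verb"S"(t)\b{\rm g}\|_{L^4_{t,x}}^4 \lesssim \|\b{\rm g}\|_{{\rm\dot H}^1}^2 \, \sup_{N} \|f_N\|_{L^4_{t,x}}^2.
\]

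By the Littlewood-Paley square function estimate applied componentwise,
\[
\|\verb"S"(t)\b{\rm g}\|_{L^4_{t,x}}^4 \lesssim \Bigl\|\textstyle\sum_N |f_N|^2\Bigr\|_{L^2_{t,x}}^2 = \sum_{N,M} \|f_N f_M\|_{L^2_{t,x}}^2.
\]
For $N \leq M$, H\"older in $x$ combined with Bernstein's inequality (Lemma \ref{Bern}), $\|P_N h\|_{L^{12}_x(\R^6)} \lesssim N \|P_N h\|_{L^4_x(\R^6)}$, gives $\|f_N(t) f_M(t)\|_{L^2_x}\lesssim N\|f_N(t)\|_{L^4_x}\|f_M(t)\|_{L^{12/5}_x}$. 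Squaring, integrating in $t$, applying H\"older in time with exponents $(2,2)$, and then the Strichartz estimate for the admissible pair $(q,r)=(4,\tfrac{12}{5})\in\Lambda_0$ (for which $\tfrac{2}{q}+\tfrac{6}{r}=3=\tfrac{d}{2}$) together with $\|P_M\b{\rm g}\|_{L^2}\sim M^{-1}\|P_M\b{\rm g}\|_{{\rm\dot H}^1}$ yields
\[
\|f_N f_M\|_{L^2_{t,x}}^2 \lesssim \Bigl(\tfrac{N}{M}\Bigr)^{\!2} \|f_N\|_{L^4_{t,x}}^2 \|P_M\b{\rm g}\|_{{\rm\dot H}^1}^2,
\]
with the symmetric bound when $M\leq N$.

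Setting $s:=\sup_N\|f_N\|_{L^4_{t,x}}$, Schur's test with the convergent geometric series $\sum_{N\leq M}(N/M)^2\lesssim 1$ and Littlewood-Paley orthogonality gives
\[
\sum_{N\leq M}\Bigl(\tfrac{N}{M}\Bigr)^{\!2} \|f_N\|_{L^4_{t,x}}^2\|P_M\b{\rm g}\|_{{\rm\dot H}^1}^2 \lesssim s^2 \sum_M \|P_M\b{\rm g}\|_{{\rm\dot H}^1}^2 \sim s^2 \|\b{\rm g}\|_{{\rm\dot H}^1}^2,
\]
while the diagonal terms $N=M$ reduce to $\|f_N^2\|_{L^2}^2 = \|f_N\|_{L^4}^4\leq s^2\|f_N\|_{L^4}^2$ together with $\sum_N\|f_N\|_{L^4_{t,x}}^2\lesssim \|\b{\rm g}\|_{{\rm\dot H}^1}^2$, the latter a direct consequence of \eqref{Str} applied to each dyadic shell. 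The main obstacle is the second step: choosing Bernstein/Strichartz exponents whose interplay produces a strictly positive power of $N/M$ while terminating on an admissible $\Lambda_0$-pair; the choice $(4,12/5)$ paired with Bernstein from $L^4_x$ to $L^{12}_x$ delivers decay $(N/M)^2$, exactly what Schur's test needs to converge.
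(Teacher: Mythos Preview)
Your proof is correct and follows exactly the approach the paper sketches (``square function estimate and the Bernstein and Strichartz inequality''); you have simply supplied the details---the specific H\"older split $L^{12}_x \times L^{12/5}_x$, the admissible pair $(4,\tfrac{12}{5})\in\Lambda_0$, and the Schur-type summation---that the paper omits. One minor remark: the separate treatment of the diagonal terms $N=M$ is redundant, since they are already covered by the $N\le M$ estimate with decay factor $(N/M)^2=1$.
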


\begin{proof}
From the square function estimate and the Bernstein and Strichartz inequality, we can obtain the result above.
\end{proof}

The refined Strichartz estimate and Lemma \ref{RSE} demonstrate that a linear solution to \eqref{NLS system} with a nontrivial space-time norm will be Fourier concentrated in at least one circular area. The following proposition will further manifest that a linear solution contains a concentrated ``bubble" around a certain space-time.

\begin{proposition}[Inverse Strichartz inequality]\label{ISI}
If the sequence $ \{ \b{\rm g}_n \}_{n=1}^\infty \subset {\rm\dot H}^1(\mathbb{R}^6) $ satisfies
\begin{equation*}
\lim_{n \to \infty} \Vert \b{\rm g}_n \Vert_{{\rm\dot H}^1(\mathbb{R}^6)} = A \le \infty, \quad \text{and} \quad \lim_{n \to \infty} \left\Vert \verb"S"(t) \b{\rm g}_n \right\Vert_{{\rm L}_{t, x}^4(\mathbb{R} \times \mathbb{R}^6)} = \eps > 0.
\end{equation*}
Then there exists a subsequence of $ \{ n \} ($still denoted by $ \{ n \})$, $ \b{\rm \phi} \in {\rm\dot H}^1(\R^6) $, $ \{ \lambda_n \}_{n=1}^\infty \subset (0, \infty) $ and $ \{ (t_n, x_n) \}_{n=1}^\infty \subset \mathbb{R} \times \mathbb{R}^6 $ such that
\begin{equation}
\lambda_n^2 \left[ \verb"S"(t_n) \b{\rm g}_n \right] (\lambda_n x + x_n) \rightharpoonup \b{\rm \phi} (x) \quad \text{weakly in $ {\rm\dot H}_x^1(\mathbb{R}^6) $},
\end{equation}
\begin{equation}
\liminf_{n \to \infty} \left\{ \Vert \b{\rm g}_n \Vert_{{\rm\dot H}_x^1(\mathbb{R}^6)}^2 - \Vert \b{\rm g}_n - \b{\rm \phi}_n \Vert_{{\rm\dot H}_x^1(\mathbb{R}^6)}^2 \right\}
= \Vert \b{\rm \phi} \Vert_{{\rm\dot H_x^1(\mathbb{R}^6)}}^2
\gtrsim A^{-10} \eps^{12},
\end{equation}
\begin{equation}
\liminf_{n \to \infty} \left\{ \left\Vert \verb"S"(t) \b{\rm g}_n \right\Vert_{{\rm L}_{t, x}^4(\mathbb{R} \times \mathbb{R}^6)}^4  - \left\Vert \verb"S"(t) ( \b{\rm g}_n - \b{\rm \phi}_n ) \right\Vert_{{\rm L}_{t, x}^4(\mathbb{R} \times \mathbb{R}^6)}^4 \right\}
\gtrsim A^{-20} \eps^{24},
\end{equation}
where
\begin{equation}
\b{\rm \phi}_n (x) := \frac1{\lambda_n^2} \left[ \verb"S" \left( -\frac{t_n}{\lambda_n^2} \right) \b{\rm \phi} \right] \left( \frac{x - x_n}{\lambda_n} \right).
\end{equation}
\end{proposition}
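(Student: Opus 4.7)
The plan is to follow the standard Keraani–Bégout–Vargas strategy, adapted to the vector-valued energy-critical setting of the Schr\"odinger system in $\R^6$. The proof breaks naturally into three stages: extracting a frequency scale where mass concentrates, extracting a spacetime point where pointwise concentration occurs, and verifying the two decoupling identities.

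First I would apply the refined Strichartz estimate of Lemma \ref{RSE} to the hypothesis $\Vert\verb"S"(t)\b{\rm g}_n\Vert_{{\rm L}_{t,x}^4}\to\eps$. This produces, after passing to a subsequence, a dyadic scale $N_n\in 2^{\Z}$ such that $\Vert \verb"S"(t)P_{N_n}\b{\rm g}_n\Vert_{{\rm L}_{t,x}^4}\gtrsim A^{-1}\eps^2$. To convert this into a pointwise concentration, I would interpolate the $L^4_{t,x}$ norm between $L^\infty_{t,x}$ and the mass-level Strichartz norm $L^{8/3}_{t,x}$ (which lies in $\Lambda_0$ in the notation of \eqref{Lambda}). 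Bernstein together with Strichartz gives $\Vert\verb"S"(t)P_{N_n}\b{\rm g}_n\Vert_{L^{8/3}_{t,x}}\lesssim \Vert P_{N_n}\b{\rm g}_n\Vert_{{\rm L}^2}\lesssim N_n^{-1}A$, so the H\"older inequality $\Vert f\Vert_{L^4}^4\le\Vert f\Vert_{L^\infty}^{4/3}\Vert f\Vert_{L^{8/3}}^{8/3}$ yields a point $(t_n,x_n)\in\R\times\R^6$ with
\[
\bigl|[\verb"S"(t_n)P_{N_n}\b{\rm g}_n](x_n)\bigr|\gtrsim A^{-5}\eps^{6}\,N_n^{2}.
\]

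Next I would set $\lambda_n:=N_n^{-1}$ and define the rescaled sequence $h_n(x):=\lambda_n^{2}[\verb"S"(t_n)\b{\rm g}_n](\lambda_n x+x_n)$, which is invariant under the scaling symmetry of the equation and hence bounded in ${\rm\dot H}^1(\R^6)$ uniformly. By Banach–Alaoglu there is a subsequence and a limit $\b{\rm\phi}$ with $h_n\rightharpoonup\b{\rm\phi}$ in ${\rm\dot H}^1$. To produce the lower bound $\Vert\b{\rm\phi}\Vert_{{\rm\dot H}^1}^2\gtrsim A^{-10}\eps^{12}$, I would test $h_n$ against a fixed Schwartz function $\b{\rm\psi}$ whose Fourier transform is the Littlewood–Paley cutoff at frequency $1$: the change of variables converts the pairing into the pointwise value $[\verb"S"(t_n)P_{N_n}\b{\rm g}_n](x_n)$ after undoing the scaling, and the lower bound from the previous step together with $\Vert\b{\rm\psi}\Vert_{{\rm\dot H}^{-1}}\lesssim 1$ produces the desired estimate on the limit.

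Finally, for the two decoupling statements I would proceed as follows. The ${\rm\dot H}^1$ Pythagorean identity follows from expanding $\Vert \b{\rm g}_n\Vert^2-\Vert \b{\rm g}_n-\b{\rm\phi}_n\Vert^2 = 2\Re\langle \b{\rm g}_n,\b{\rm\phi}_n\rangle_{\dot H^1}-\Vert\b{\rm\phi}_n\Vert^2$ and noting that scale invariance rewrites $\langle\b{\rm g}_n,\b{\rm\phi}_n\rangle_{\dot H^1}=\langle h_n,\b{\rm\phi}\rangle_{\dot H^1}\to\Vert\b{\rm\phi}\Vert^2$ by weak convergence. The $L^4_{t,x}$ decoupling is the technical heart of the argument and the step I expect to be the main obstacle: one needs a Brezis–Lieb type improvement stating that if $f_n\rightharpoonup f$ in ${\rm\dot H}^1$ and $f_n\to f$ a.e., then $\Vert\verb"S"(t)f_n\Vert_{L^4_{t,x}}^4-\Vert\verb"S"(t)(f_n-f)\Vert_{L^4_{t,x}}^4\to\Vert\verb"S"(t)f\Vert_{L^4_{t,x}}^4$. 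I would obtain the almost-everywhere convergence of $\verb"S"(t)h_n$ to $\verb"S"(t)\b{\rm\phi}$ from Rellich–Kondrachov applied locally in $(t,x)$ (using that the Schr\"odinger propagator for both components preserves ${\rm\dot H}^1$ and gains local compactness in weaker norms), and then apply the refined Brezis–Lieb argument to conclude. Combining this with the pointwise lower bound already established gives the claimed $\gtrsim A^{-20}\eps^{24}$ bound.
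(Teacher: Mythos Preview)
The paper does not supply a proof of Proposition~\ref{ISI}; it states the result and moves directly to the linear profile decomposition, with only the subsequent Remark hinting at the method (``Rellich--Kondrashov and refined Fatou'') for the companion $L^3$ decoupling. Your proposal is correct and is exactly the standard Keraani / Killip--Visan argument that the paper is implicitly invoking: refined Strichartz (Lemma~\ref{RSE}) to extract the frequency scale $N_n$, the $L^4$--$L^{8/3}$--$L^\infty$ interpolation together with Bernstein to produce the pointwise concentration $|[\verb"S"(t_n)P_{N_n}\b{\rm g}_n](x_n)|\gtrsim A^{-5}\eps^{6}N_n^{2}$, the rescaling $\lambda_n=N_n^{-1}$ and weak $\dot H^1$ limit tested against a Littlewood--Paley bump to get the $\dot H^1$ lower bound, and finally Rellich--Kondrashov plus refined Fatou / Brezis--Lieb for the $L^4_{t,x}$ decoupling. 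The numerology you obtain matches the stated exponents, and the step you flag as the main obstacle (the spacetime Brezis--Lieb) is indeed the only place requiring care; the paper's own remark confirms this is the intended route.
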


\begin{remark}
Under the hypotheses of Proposition \ref{ISI} and passing to a further subsequence if necessary, the decoupling of potential energy holds (using Rellich-Kondrashov and refined Fatou), that is,
\begin{equation*}
\liminf_{n \to \infty} \left\{ \Vert \b{\rm g}_n \Vert_{{\rm L}_x^3(\mathbb{R}^6)}^3 - \Vert \b{\rm g}_n - \b{\rm \phi}_n \Vert_{{\rm L}_x^3(\mathbb{R}^6)}^3 - \left\Vert \verb"S" \left( - \frac{t_n}{\lambda_n^2} \right) \b{\rm \phi} \right\Vert_{{\rm L}_x^3(\mathbb{R}^6)}^3 \right\} = 0.
\end{equation*}
\end{remark}

Using the above proposition, we can obtain the linear profile decomposition as follows:

\begin{theorem}
[Linear profile decomposition]\label{lpd}
Let $ \b{\rm \phi}_n = ( \phi_n, \psi_n ) $ be an uniformly bounded sequence in $ {\rm\dot H}^1(\mathbb{R}^6) $ with $ \Vert \b{\rm \phi}_n \Vert \le A $ for any $ 1 \le n < +\infty $.
Then there exists $ M^* \in \{ 1, 2, \cdot\cdot\cdot \} \cup \{ \infty \} $ such that for each finite $ 1 \le M \le M^* $,
\begin{enumerate}
  \item[$(1)$]for each $ 1 \le j \le M $, there exist a sequence of space shifts $ \{ x^j_n \}_{n=1}^\infty \subset \mathbb{R}^6 $, a sequence of time shifts $ \{ t^j_n \}_{n=1}^\infty \subset \mathbb{R} $,
a sequence of scaling shifts $ \{ \lambda^j_n \}_{n=1}^\infty \subset (0, +\infty) $,
and a profile $ \b{\rm \phi}^j := ( \phi^j, \psi^j ) $ (fixed in $ n $) in $ {\rm\dot H}^1(\mathbb{R}^6) $, with
\begin{equation}
\frac{\lambda^j_n}{\lambda^l_n} + \frac{\lambda^l_n}{\lambda^j_n} + \frac{\vert x^j_n - x^l_n \vert^2}{\lambda^j_n \lambda^l_n} + \frac{\vert t^j_n (\lambda^j_n)^2 - t^l_n (\lambda^l_n)^2 \vert}{\lambda^j_n \lambda^l_n} \to + \infty, \quad \forall ~ j \ne l \in \{ 1, 2, \cdot\cdot\cdot, M \}.
\label{asy}
\end{equation}
We may additionally assume that for each $ j $ either $ t^j_n \equiv 0 $ or $ t^j_n \to \pm \infty $.
Especially, for fixed $ 1 \le j_0 \le M $, there exists $ \alpha(j_0) > 0 $ such that
\begin{equation}
H \left( \b{\rm \phi}^{j_0} \right) \ge \alpha(j_0).
\label{H>0}
\end{equation}

    \item[$(2)$]There exists a sequence (in $ n $) of remainders $ \b{\rm \Phi}^M_n := ( \Phi^M_n, \Psi^M_n ) $ in $ {\rm\dot H}^1(\mathbb{R}^6) $,
such that
\begin{equation}
\b{\rm \phi}_n = \sum_{j=1}^M \mathcal{T}_{\lambda^j_n} \left[ \verb"S" \left( \frac{-t^j_n}{(\lambda^j_n)^2} \right) \b{\rm \phi}^j \right] + \b{\rm \Phi}^M_n,
\label{pro}
\end{equation}
where $ \mathcal{T}_\lambda \b{\rm u} = (T_\lambda u, T_\lambda v)^t $ for $ \b{\rm u} = (u, v)^t $, with
\begin{equation}
\lim_{M \to M^*} \limsup_{n \to \infty} \left\Vert \verb"S"(t) \b{\rm \Phi}^M_n \right\Vert_{{\rm L}_{t, x}^4(\mathbb{R} \times \mathbb{R}^6)}
= 0,
\label{remainder1}
\end{equation}
and
\begin{equation}
\verb"S"(-t^M_n) \left[ \mathcal{T}_{\lambda^M_n}^{-1} \b{\rm \Phi}^M_n \right] \rightharpoonup 0 \qquad \text{weakly in $ {\rm\dot H}^1(\mathbb{R}^6) $},
\label{remainder2}
\end{equation}
where $ T_{\lambda}f(x) := \frac1{\lambda^2} f(\frac{x - x^j_n}\lambda) $ and so $ T_{\lambda}^{-1}f(x) := \lambda^2 f(\lambda x + x^j_n) $.

\end{enumerate}
Besides, we have the asymptotic pythagorean expansion:
\begin{equation}
\left\Vert \phi_n \right\Vert_{{\dot H}^1(\mathbb{R}^6)}^2
= \sum_{j=1}^M \left\Vert \phi^j \right\Vert_{{\dot H}^1(\mathbb{R}^6)}^2 + \left\Vert \Phi^M_n \right\Vert_{{\dot H}^1(\mathbb{R}^6)}^2 + o_n (1),
\label{ape1}
\end{equation}
\begin{equation}
\left\Vert \psi_n \right\Vert_{{\dot H}^1(\mathbb{R}^6)}^2
= \sum_{j=1}^M \left\Vert \psi^j \right\Vert_{{\dot H}^1(\mathbb{R}^6)}^2 + \left\Vert \Psi^M_n \right\Vert_{{\dot H}^1(\mathbb{R}^6)}^2 + o_n (1),
\label{ape2}
\end{equation}
\begin{equation}
\left\Vert \phi_n \right\Vert_{L^3(\mathbb{R}^6)}^3
= \sum_{j=1}^M \left\Vert e^{it^j_n\Delta}\phi^j \right\Vert_{L^3(\mathbb{R}^6)}^3 + \left\Vert \Phi^M_n \right\Vert_{L^3(\mathbb{R}^6)}^3 + o_n (1),
\label{ape3}
\end{equation}
\begin{equation}
\left\Vert \psi_n \right\Vert_{L^3(\mathbb{R}^6)}^3
= \sum_{j=1}^M \left\Vert e^{\kappa it^j_n\Delta}\psi^j \right\Vert_{L^3(\mathbb{R}^6)}^3 + \left\Vert \Psi^M_n \right\Vert_{L^3(\mathbb{R}^6)}^3 + o_n (1).
\label{ape4}
\end{equation}
\end{theorem}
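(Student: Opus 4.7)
The plan is to build the profile decomposition inductively via the inverse Strichartz inequality. Set $ \b{\rm \Phi}_n^0 := \b{\rm \phi}_n $ and, for $ M \ge 0 $, define
\[
\eps_M := \limsup_{n \to \infty} \left\Vert \verb"S"(t) \b{\rm \Phi}_n^M \right\Vert_{{\rm L}_{t,x}^4(\R \times \R^6)}.
\]
If $ \eps_M = 0 $, stop and set $ M^* = M $; otherwise apply Proposition \ref{ISI} to $ \b{\rm \Phi}_n^M $ (after passing to a subsequence) to extract parameters $ (\lambda_n^{M+1}, x_n^{M+1}, t_n^{M+1}) $ and a profile $ \b{\rm \phi}^{M+1} \in {\rm\dot H}^1 $ with $ H(\b{\rm \phi}^{M+1}) \gtrsim A^{-10}\eps_M^{12} $, then define
\[
\b{\rm \Phi}_n^{M+1} := \b{\rm \Phi}_n^M - \mathcal{T}_{\lambda_n^{M+1}} \left[ \verb"S"\left(\tfrac{-t_n^{M+1}}{(\lambda_n^{M+1})^2}\right) \b{\rm \phi}^{M+1} \right].
\]
By a standard time-normalization argument (passing to a subsequence), I may assume $ t_n^j \equiv 0 $ or $ t_n^j/(\lambda_n^j)^2 \to \pm\infty $ for each $ j $. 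The weak convergence property \eqref{remainder2} is built in at each step by construction from Proposition \ref{ISI}, and the lower bound \eqref{H>0} with $ \alpha(j_0) \gtrsim A^{-10} \eps_{j_0-1}^{12} $ comes directly from the quantitative energy drop.

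The next step is to iterate the $ {\rm\dot H}^1 $-decoupling in Proposition \ref{ISI}, giving
\[
\Vert \b{\rm \phi}_n \Vert_{{\rm\dot H}^1}^2 \ge \sum_{j=1}^M H(\b{\rm \phi}^j) + \Vert \b{\rm \Phi}_n^M \Vert_{{\rm\dot H}^1}^2 + o_n(1).
\]
Since the left side is bounded by $ A^2 $, summability forces $ H(\b{\rm \phi}^j) \to 0 $, and hence $ \eps_{M-1} \to 0 $ as $ M \to M^* $; this yields \eqref{remainder1}. The decompositions \eqref{ape1}--\eqref{ape2} for the $ \dot H^1 $ norms of each component follow by the same iteration applied componentwise.

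For the asymptotic orthogonality \eqref{asy}, I argue by contradiction: if some pair $ j<l $ had all four quantities bounded along a subsequence, then by passing to a subsequence the unitary operator translating the parameters of index $ j $ to those of index $ l $ would converge strongly in the strong operator topology. Testing the remainder $ \b{\rm \Phi}_n^{l-1} $ against $ \b{\rm \phi}^l $ through the index-$ l $ extraction would then force the weak limit defining $ \b{\rm \phi}^l $ to coincide with (a translate/dilate of) a piece of $ \b{\rm \phi}^j $, contradicting \eqref{remainder2} at the $ j $-th step. This type of orthogonality argument is classical and proceeds via change of variables $ y = (x - x_n^l)/\lambda_n^l $ and a Rellich--Kondrachov compactness extraction.

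The main obstacle, and the most delicate step, is the $ {\rm L}^3 $-Pythagorean expansion \eqref{ape3}--\eqref{ape4}. For a single profile it is a Brezis--Lieb application combined with the compact embedding $ {\dot H}^1(\R^6) \hookrightarrow L^3_{\text{loc}} $; for multiple profiles one must show that the rescaled profiles are asymptotically $ {\rm L}^3 $-orthogonal. I would establish this by splitting into cases governed by which term in \eqref{asy} diverges: if $ \lambda_n^j/\lambda_n^l + \lambda_n^l/\lambda_n^j \to \infty $, the two bubbles live at incompatible scales and a direct change of variables plus H\"older finishes it; if the scales are comparable but $ |x_n^j - x_n^l|/\lambda_n^j \to \infty $, the supports separate spatially; if the spatial centers are comparable but the time shifts diverge, the dispersive estimate (Lemma \ref{disper}) kills the overlap because $ \verb"S"(t) \b{\rm \phi}^j \to 0 $ in $ L^3 $ as $ t \to \pm\infty $. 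Summing over $ j \ne l $ and inducting on $ M $, one extracts \eqref{ape3}--\eqref{ape4} from the one-profile Brezis--Lieb identity together with this pairwise orthogonality.
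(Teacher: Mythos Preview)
Your proposal is correct and follows essentially the same approach as the paper: the paper does not provide a detailed proof of Theorem \ref{lpd} but simply indicates that it follows from iterating Proposition \ref{ISI} (the inverse Strichartz inequality) together with the Brezis--Lieb remark after it. Your outline fleshes out exactly this standard argument, including the inductive extraction of profiles, the orthogonality of parameters via contradiction, and the $L^3$ Pythagorean expansion via case analysis on which term of \eqref{asy} diverges.
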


At the same, we have the orthogonality of the energy of linear profiles.

\begin{corollary}
[Energy pythagorean expansion]\label{epe}
Under the assumptions of in Theorem \ref{lpd}, we have
\begin{equation}
H(\b{\rm \phi}_n) = \sum_{j=1}^M H \left( \b{\rm \phi}^j \right)
+ H \left( \b{\rm \Phi}^M_n \right) + o_n (1),
\label{HPe}
\end{equation}
and
\begin{equation}
E(\b{\rm \phi}_n) = \sum_{j=1}^M E \left( \verb"S"\left(\frac{-t^j_n}{(\lambda^j_n)^2}\right) \b{\rm \phi}^j \right)
+ E \left( \b{\rm \Phi}^M_n \right) + o_n (1).
\label{EPe}
\end{equation}
\end{corollary}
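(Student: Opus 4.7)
The plan is to establish the kinetic decoupling \eqref{HPe} and the potential decoupling separately, then combine them via $E=H-R$.

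For \eqref{HPe}, the argument is essentially algebraic. Since the Schr\"odinger group $\verb"S"(s)$ is unitary on ${\rm\dot H}^1$, we have $H\bigl(\verb"S"(-t_n^j/(\lambda_n^j)^2)\b{\rm\phi}^j\bigr)=H(\b{\rm\phi}^j)$ for each $j$, so \eqref{HPe} follows by multiplying \eqref{ape2} by $\kappa/2$ and adding to \eqref{ape1}.

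For \eqref{EPe}, in view of $E=H-R$ and \eqref{HPe}, it suffices to prove the analogous pythagorean expansion for $R$, namely
\begin{equation*}
R(\b{\rm\phi}_n)=\sum_{j=1}^M R\Bigl(\verb"S"\bigl(-t_n^j/(\lambda_n^j)^2\bigr)\b{\rm\phi}^j\Bigr)+R(\b{\rm\Phi}_n^M)+o_n(1).
\end{equation*}
A direct change of variables shows that $R$ is invariant under the rescalings $\mathcal{T}_\lambda$ and spatial translations, so writing $\b{\rm\phi}_n^j:=\mathcal{T}_{\lambda_n^j}[\verb"S"(-t_n^j/(\lambda_n^j)^2)\b{\rm\phi}^j]$ reduces the goal to $R(\b{\rm\phi}_n)=\sum_j R(\b{\rm\phi}_n^j)+R(\b{\rm\Phi}_n^M)+o_n(1)$. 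Substituting \eqref{pro} into $R(\b{\rm\phi}_n)=\Re\int\overline{\psi_n}\phi_n^2\,dx$ and expanding the trilinear integrand produces the diagonal contributions $R(\b{\rm\phi}_n^j)$ and $R(\b{\rm\Phi}_n^M)$, together with a finite collection of cross terms; the task is to show each cross term is $o_n(1)$.

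Cross terms coupling two distinct profile indices $j\neq l$ vanish by the asymptotic orthogonality \eqref{asy}: by density, one first approximates $\b{\rm\phi}^j,\b{\rm\phi}^l$ in ${\rm\dot H}^1$ by compactly supported smooth functions, with errors controlled through the Sobolev embedding ${\rm\dot H}^1\hookrightarrow L^3$; scaling or spatial separation then renders the rescaled/translated supports asymptotically disjoint, while temporal separation produces $L^3$ decay via the dispersive estimate of Lemma \ref{disper}. Cross terms involving the remainder $\b{\rm\Phi}_n^M$ are handled using the weak convergence \eqref{remainder2}, together with its standard analogue $\mathcal{T}_{\lambda_n^j}^{-1}\verb"S"(-t_n^j/(\lambda_n^j)^2)\b{\rm\Phi}_n^M\rightharpoonup 0$ in ${\rm\dot H}^1$ for each $j\le M$ (which follows from the iterative profile extraction), combined with the compact embedding ${\rm\dot H}^1\hookrightarrow L^3_{\mathrm{loc}}$ (Rellich--Kondrashov). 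This is precisely the trilinear analogue of the Brezis--Lieb-type reasoning already invoked for the $L^3$ pythagorean expansions \eqref{ape3}--\eqref{ape4}.

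The main obstacle is the systematic bookkeeping of the cross terms, and in particular the temporal-separation case, where one must quantify the decay of $\|e^{is\Delta}\tilde\phi^j\|_{L^3}$ as $|s|\to\infty$ via the dispersive estimate and then combine it with H\"older to dominate the full trilinear coupling uniformly in the rescaling. The remaining subcases --- scaling/spatial orthogonality and pairings against $\b{\rm\Phi}_n^M$ --- essentially parallel the arguments underlying \eqref{ape3}--\eqref{ape4} and are comparatively mechanical.
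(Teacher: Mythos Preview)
Your proposal is correct and follows the standard route: \eqref{HPe} is an immediate linear combination of \eqref{ape1}--\eqref{ape2}, and \eqref{EPe} then reduces to a Pythagorean expansion for the trilinear functional $R$, whose cross terms are killed by the orthogonality \eqref{asy} and the weak-convergence/Rellich--Kondrashov argument you describe. The paper states Corollary~\ref{epe} without proof, treating it as a direct consequence of Theorem~\ref{lpd} (in particular of \eqref{ape1}--\eqref{ape4}); your argument is exactly the intended justification.
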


\subsection{Nonlinear profile}\label{np}
First, we introduce the definition of nonlinear profile, which is associated with an initial data and a sequence of time, and then prove that it is well-defined in this subsection.

\begin{definition}\label{nlp}
Let $ \b{\rm v}_0 \in {\dot{\rm H}}^1(\mathbb{R}^6), \b{\rm v}(t, x) = \verb"S"(t) \b{\rm v}_0 $ and let $ \{ t_n \}_{n=1}^\infty $ be a sequence with the limit $ t_n \to t_{\infty} \in \mathbb{R} \cup \{ \pm \infty \} $.
We say that $ \b{\rm u} $ is a \emph{nonlinear profile} associated with $ (\b{\rm v}_0, \{ t_n \}_{n=1}^\infty) $ if there exists an interval $ I \ni t_{\infty} $ ( if $ \vert t_{\infty} \vert = \infty $, $ I = [a, +\infty) $ or $ I = (-\infty, a] $ ) such that $ \b{\rm u} $ is a solution to \eqref{NLS system} in $ I $ and
\begin{equation*}
\lim_{n \to \infty} \Vert \b{\rm u}(t_n, \cdot) - \b{\rm v}(t_n, \cdot) \Vert_{{\dot{\rm H}}^1(\mathbb{R}^6)} = 0.
\end{equation*}
\end{definition}

By the small data theory and Theorem \ref{sd}, we can prove the existence and uniqueness of the nonlinear profile, which ensures the notation of the nonlinear profile is well-defined  in Definition \ref{nlp}.

\begin{proposition}
There exists at least one nonlinear profile associated with $ (\b{\rm v}_0, \{ t_n \}_{n=1}^\infty) $.
\end{proposition}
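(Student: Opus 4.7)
The plan is to split into two cases according to whether $t_\infty$ is finite or infinite, and in each case construct $\b{\rm u}$ by directly invoking the local well-posedness theory (Theorem \ref{sd}).

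\textbf{Case 1: $t_\infty \in \mathbb{R}$.} In this case I would take the initial data to be $\b{\rm u}(t_\infty) := \b{\rm v}(t_\infty) = \verb"S"(t_\infty)\b{\rm v}_0 \in {\rm\dot H}^1(\mathbb{R}^6)$, and let $\b{\rm u}$ be the maximal-lifespan solution to \eqref{NLS system} guaranteed by Theorem \ref{sd}. By part (1) of that theorem, the lifespan $I$ is an open interval containing $t_\infty$, so $t_n \in I$ for all sufficiently large $n$. The solution $\b{\rm u} \in C_t(I, {\rm\dot H}^1)$ and the linear evolution $\b{\rm v} \in C_t(\mathbb{R}, {\rm\dot H}^1)$ are both continuous in time with values in $\dot{\rm H}^1$, so
\begin{equation*}
\bigl\| \b{\rm u}(t_n) - \b{\rm v}(t_n) \bigr\|_{{\rm\dot H}^1} \xrightarrow[n\to\infty]{} \bigl\| \b{\rm u}(t_\infty) - \b{\rm v}(t_\infty) \bigr\|_{{\rm\dot H}^1} = 0.
\end{equation*}

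\textbf{Case 2: $t_\infty = +\infty$ (the case $-\infty$ is symmetric).} Here I would appeal directly to the existence of wave operators contained in part (3) of Theorem \ref{sd}: given the scattering datum $\b{\rm u}_+ := \b{\rm v}_0 \in {\rm\dot H}^1$, there exists a solution $\b{\rm u}$ to \eqref{NLS system} on a neighborhood $I = [a,+\infty)$ of $+\infty$ satisfying
\begin{equation*}
\lim_{t\to+\infty} \bigl\| \b{\rm u}(t) - \verb"S"(t)\b{\rm v}_0 \bigr\|_{{\rm\dot H}^1} = 0.
\end{equation*}
Since $t_n \to +\infty$, evaluating this limit along the subsequence $\{t_n\}$ (which lies in $I$ for $n$ large) gives $\|\b{\rm u}(t_n) - \b{\rm v}(t_n)\|_{{\rm\dot H}^1} \to 0$, as required.

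The proof is essentially an unpacking of Theorem \ref{sd}, so I do not anticipate a genuine obstacle. The one subtlety worth flagging explicitly is that in Case 2 the construction of the wave-operator solution $\b{\rm u}$ itself uses a Strichartz/fixed-point argument around the free evolution $\verb"S"(t)\b{\rm v}_0$ on a sufficiently late time interval where $\|\verb"S"(t)\b{\rm v}_0\|_{\mathcal{S}(I,{\rm\dot H}^1)} < \delta_{sd}$ (available by Remark \ref{esd} and monotone convergence), but this is precisely what is already granted by Theorem \ref{sd}(3), so no additional argument is needed.
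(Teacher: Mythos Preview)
Your proposal is correct and follows essentially the same route as the paper: both split into $t_\infty\in\mathbb{R}$ versus $t_\infty=\pm\infty$, solve \eqref{NLS system} with data $\verb"S"(t_\infty)\b{\rm v}_0$ in the first case, and invoke the wave-operator construction (Theorem~\ref{sd}(3), equivalently a fixed point on $[t_N,\infty)$ where $\|\verb"S"(t)\b{\rm v}_0\|_{\mathcal{S}}\le\delta_{sd}$) in the second. The only cosmetic difference is that the paper writes out the Duhamel formula explicitly to verify $\|\b{\rm u}(t_n)-\b{\rm v}(t_n)\|_{\dot{\rm H}^1}\to 0$, whereas you appeal directly to $C_t\dot{\rm H}^1$ continuity in Case~1 and to the scattering statement in Case~2; both are equivalent.
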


\begin{proof}
 There are two cases to consider: either $ \vert t_{\infty} \vert < \infty $ or $ \vert t_{\infty} \vert = \infty $.

{\bf Case 1.} If $ \vert t_{\infty} \vert < \infty $, we take $ \b{\rm u}_0 = \b{\rm v} (t_{\infty}, x) = \verb"S"(t_{\infty}) \b{\rm v}_0 $ and solve the initial value problem $ \b{\rm u} \big\vert_{t=t_{\infty}} = \b{\rm u}_0 $ in any time interval $ I \ni t_{\infty} $ to obtain
\begin{equation*}
\begin{aligned}
& ~ \lim_{n \to \infty} \Vert \b{\rm u}(t_n, \cdot) - \b{\rm v}(t_n, \cdot) \Vert_{{\dot{\rm H}}^1(\mathbb{R}^6)} \\
\le & ~ \lim_{n \to \infty} \left\Vert \verb"S"(t_n - t_{\infty}) \b{\rm u}_0 - \verb"S"(t_n) \b{\rm v}_0 \right\Vert_{{\dot{\rm H}}^1(\mathbb{R}^6)}
+ \lim_{n \to \infty} \left\Vert \int_{t_{\infty}}^{t_n} \verb"S"(t_n - s) \b{\rm f}(\b{\rm u}(s)) {\rm d}s \right\Vert_{{\dot{\rm H}}^1(\mathbb{R}^6)} \\
= & ~ \lim_{n \to \infty} \left\Vert \verb"S"(t_n) \b{\rm v}_0 - \verb"S"(t_n) \b{\rm v}_0 \right\Vert_{{\dot{\rm H}}^1(\mathbb{R}^6)}
+ \lim_{n \to \infty} \left\Vert \int_{t_{\infty}}^{t_n} \verb"S"(t_n - s) \b{\rm f}(\b{\rm u}(s)) {\rm d}s \right\Vert_{{\dot{\rm H}}^1(\mathbb{R}^6)} \\
= & ~ 0.
\end{aligned}
\end{equation*}

{\bf Case 2.} If $ \vert t_{\infty} \vert = \infty $, we can assume $ t_{\infty} = +\infty $ without loss of generality and solve the integral equation
\begin{equation*}
\b{\rm u}(t) = \verb"S"(t) \b{\rm v}_0 + i \int_t^{\infty} \verb"S"(t-s) \b{\rm f}(\b{\rm u}(s)) {\rm d}s
\end{equation*}
in $ [t_N, +\infty) \times \mathbb{R}^6 $ for $ N \gg 1 $ so large that $ \Vert \verb"S"(t) \b{\rm v}_0 \Vert_{\mathcal{S}([t_N, +\infty), {\dot{\rm H}}^1(\mathbb{R}^6))} \le \delta_{sd} $, where $ \delta_{sd} $ is as in Theorem \ref{sd}.
Then, $ \b{\rm u} $ is a solution in $ I = [t_N, +\infty) $ with the terminal value $ \b{\rm v}_0 $.

For  large $ n $, we have
\begin{equation*}
\b{\rm u}(t_n) - \b{\rm v}(t_n)
= i \int_{t_n}^{\infty} \verb"S"(t_n-s) \b{\rm f}(\b{\rm u}(s)) {\rm d}s,
\end{equation*}
and
\begin{equation*}
\Vert \nabla \b{\rm f}(\b{\rm u}) \Vert_{\mathcal{S}^\prime((t_n, +\infty), {\rm L}^2(\mathbb{R}^6))} < +\infty,
\end{equation*}
as in the proof of Theorem \ref{sd}.
So
\begin{equation*}
\begin{aligned}
& ~ \lim_{n \to \infty} \Vert \b{\rm u}(t_n, \cdot) - \b{\rm v}(t_n, \cdot) \Vert_{{\dot{\rm H}}^1(\mathbb{R}^6)}
\le \lim_{n \to \infty} \left\Vert \int_{t_n}^{\infty} \verb"S"(t_n - s) \b{\rm f}(\b{\rm u}(s)) {\rm d}s \right\Vert_{{\dot{\rm H}}^1(\mathbb{R}^6)} \\
\le & ~ C \lim_{n \to +\infty} \left\Vert \nabla \b{\rm f}(\b{\rm u}) \right\Vert_{\mathcal{S}^\prime((t_n, \infty), {\rm L}^2(\mathbb{R}^6))}
= 0.
\end{aligned}
\end{equation*}

Combining the above two cases, we complete our proof.
\end{proof}

\begin{proposition}
There exists  at most one nonlinear profile associated with $ (\b{\rm v}_0, \{ t_n \}_{n=1}^\infty) $ on an interval $ I $ with $ t_\infty \in I $.
\end{proposition}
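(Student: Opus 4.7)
The plan is to argue by contradiction: suppose $\b{\rm u}_1$ and $\b{\rm u}_2$ are two nonlinear profiles associated with $(\b{\rm v}_0, \{t_n\}_{n=1}^\infty)$ on the same interval $I$. The defining property of a nonlinear profile immediately yields
\[
\lim_{n\to\infty} \left\Vert \b{\rm u}_1(t_n) - \b{\rm u}_2(t_n) \right\Vert_{{\rm\dot H}^1(\R^6)} = 0,
\]
and I will split the analysis into the two cases already used in the existence proof.

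\textbf{Case 1: $|t_\infty|<\infty$.} Since $t_\infty\in I$ and each $\b{\rm u}_i\in C_t(I,{\rm\dot H}^1(\R^6))$, the displayed limit together with $\b{\rm v}(t_n)\to \verb"S"(t_\infty)\b{\rm v}_0$ in ${\rm\dot H}^1$ forces $\b{\rm u}_1(t_\infty)=\b{\rm u}_2(t_\infty)=\verb"S"(t_\infty)\b{\rm v}_0$. The uniqueness statement of Theorem \ref{sd} applied at the initial time $t_\infty$ gives $\b{\rm u}_1\equiv\b{\rm u}_2$ on some neighborhood of $t_\infty$. A standard continuation/bootstrap argument (the set $\{t\in I:\b{\rm u}_1(t)=\b{\rm u}_2(t)\}$ is nonempty, closed by continuity, and open by the local uniqueness in Theorem \ref{sd}) then extends the agreement to all of $I$.

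\textbf{Case 2: $|t_\infty|=\infty$.} Without loss of generality $t_\infty=+\infty$, so $I=[a,+\infty)$. Using Strichartz (Theorem \ref{strichartz}) I pick $T\gg1$ so large that
\[
\left\Vert \verb"S"(t)\b{\rm v}_0 \right\Vert_{\mathcal{S}([T,+\infty),{\rm\dot H}^1(\R^6))} < \delta
\]
for a small $\delta>0$ determined by the small-data constants. For $n$ sufficiently large, $t_n>T$ and both $\b{\rm u}_i(t_n)$ lie within $\delta$ of $\verb"S"(t_n)\b{\rm v}_0$. I then subtract the Duhamel formulas for $\b{\rm u}_1$ and $\b{\rm u}_2$ on $[t,t_n]$ and let $n\to\infty$, obtaining
\[
\b{\rm u}_1(t)-\b{\rm u}_2(t) = -i\int_t^{+\infty} \verb"S"(t-s)\bigl[\b{\rm f}(\b{\rm u}_1(s))-\b{\rm f}(\b{\rm u}_2(s))\bigr]\,{\rm d}s,\qquad t\ge T,
\]
the boundary term vanishing because $\|\b{\rm u}_1(t_n)-\b{\rm u}_2(t_n)\|_{{\rm\dot H}^1}\to 0$. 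Bounding the right-hand side in $\mathcal{S}([T,+\infty),{\rm\dot H}^1)$ exactly as in the proof of Proposition \ref{pt} (using the bilinear structure of $\b{\rm f}$ and the smallness of $\|\verb"S"(t)\b{\rm v}_0\|_{\mathcal{S}([T,+\infty),{\rm\dot H}^1)}$) gives a self-improving estimate of the form
\[
\Vert \b{\rm u}_1-\b{\rm u}_2 \Vert_{\mathcal{S}([T,+\infty),{\rm\dot H}^1)} \le \tfrac12 \Vert \b{\rm u}_1-\b{\rm u}_2 \Vert_{\mathcal{S}([T,+\infty),{\rm\dot H}^1)},
\]
which forces $\b{\rm u}_1\equiv\b{\rm u}_2$ on $[T,+\infty)$. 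Since they agree at $t=T\in I$, the uniqueness in Theorem \ref{sd} applied backward extends the identity to all of $[a,+\infty)$.

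The main technical point is the justification of passing to the limit $n\to\infty$ in Case 2, which requires that $\b{\rm u}_i$ have finite Strichartz norm on $[T,+\infty)$; this in turn follows from the smallness of $\|\verb"S"(t)\b{\rm v}_0\|_{\mathcal{S}([T,+\infty),{\rm\dot H}^1)}$ combined with the Duhamel fixed-point argument of Theorem \ref{sd}, so no new ingredient beyond the already established local/perturbation theory is needed.
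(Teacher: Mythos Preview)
Your proof is correct and follows essentially the same strategy as the paper's: split into the cases $|t_\infty|<\infty$ and $|t_\infty|=\infty$, reduce Case~1 to local uniqueness at $t_\infty$, and in Case~2 exploit the smallness of Strichartz norms near infinity to run a contraction/stability argument on the difference. The only cosmetic difference is that in Case~2 the paper works on finite intervals $(t_N,t_M)$ and invokes continuous dependence to bound $\sup_{t\in(t_N,t_M)}\|\b{\rm u}^{(1)}(t)-\b{\rm u}^{(2)}(t)\|_{{\rm\dot H}^1}$ by the difference at $t_M$ (then lets $M\to\infty$), whereas you pass to the limit in Duhamel first and do the contraction directly on $[T,+\infty)$; your explicit justification of the finite Strichartz norm on $[T,+\infty)$ is in fact a point the paper glosses over.
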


\begin{proof}
We can show it by contradiction. Assume $ \b{\rm u}^{(1)} $ and $ \b{\rm u}^{(2)} $ are both the nonlinear profiles mentioned.

{\bf Case 1.} If $ \vert t_{\infty} \vert < \infty $, it is obvious from
\begin{equation*}
\begin{aligned}
& ~ \lim_{n \to \infty} \left\Vert \b{\rm u}^{(1)}(t_n, \cdot) - \b{\rm u}^{(2)}(t_n, \cdot) \right\Vert_{{\dot{\rm H}}^1(\mathbb{R}^6)} \\
\le & ~ \lim_{n \to \infty} \left\Vert \b{\rm u}^{(1)}(t_n, \cdot) - \b{\rm v}(t_n, \cdot) \right\Vert_{{\dot{\rm H}}^1(\mathbb{R}^6)} + \left\Vert \b{\rm u}^{(2)}(t_n, \cdot) - \b{\rm v}(t_n, \cdot) \right\Vert_{{\dot{\rm H}}^1(\mathbb{R}^6)} \\
= & ~ 0
\end{aligned}
\end{equation*}
and the well-posedness, the uniqueness in Duhamel formula, of $ \b{\rm u} $.

{\bf Case 2.} If $ \vert t_{\infty} \vert = \infty $, we can also assume $ t_{\infty} = +\infty $.
Because $ \| \nabla \b{\rm u}^{(i)} \|_{\mathcal{S}(I, {\rm L}^2 (\mathbb{R}^6))} \le \infty $ for $ i = 1, 2 $, for any $ \eps > 0 $, there  exists $ N > 0 $ such that $ \forall ~ n \ge N $
\begin{equation*}
\left\Vert \nabla \b{\rm u}^{(i)} \right\Vert_{\mathcal{S}((t_n, +\infty), {\rm L}^2 (\mathbb{R}^6))} < \eps.
\end{equation*}
By the proof of Theorem \ref{sd}, for $ M \gg N $, we have
\begin{equation*}
\sup_{t \in (t_N, t_M)} \left\Vert \nabla \b{\rm u}^{(1)}(t, \cdot) - \nabla \b{\rm u}^{(2)}(t, \cdot) \right\Vert_{{\rm L}^2(\mathbb{R}^6)} \le C \left\Vert \nabla \b{\rm u}^{(1)}(t_M, \cdot) - \nabla \b{\rm u}^{(2)}(t_M, \cdot) \right\Vert_{{\rm L}^2(\mathbb{R}^6)},
\end{equation*}
which implies that $ \b{\rm u}^{(1)} \equiv \b{\rm u}^{(2)} $ on $ (t_N, +\infty) $ and hence on $ I $.
\end{proof}

\subsection{Existence of critical solution}\label{birth}
 In order to describe the properties of critical solution, especially compactness,  we first briefly record a Palais-Smale condition modulo symmetries to describe the compactness.

\begin{proposition}[Palais-Smale condition modulo symmetries, \cite{Visan2012}]\label{PS}
Let $ \b{\rm u}_n : I_n \times \R^6 \to \C^2$ be a sequence of solutions to \eqref{NLS system} such that
\[
\limsup_{n\to \infty} \sup_{t\in I_n} H(\b{\rm u}_n(t)) = H_c
\]
and $ \{ t_n \}_{n=1}^\infty \subset I_n $ be a sequence of time such that
\begin{equation}
\lim_{n\to \infty} S_{\ge t_n} (\b{\rm u}_n) = \lim_{n\to \infty} S_{\le t_n} (\b{\rm u}_n) = \infty.
\label{S_n}
\end{equation}
Then there exists a subsequence of $ \{ \b{\rm u}_n(t_n) \}_{n=1}^\infty $  which converges in $ {\rm\dot H}^1_x(\R^6) $ modulo symmetries.
\end{proposition}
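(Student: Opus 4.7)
\medskip

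\noindent\textbf{Proof proposal for Proposition \ref{PS}.}
The plan is to run the usual profile-decomposition/nonlinear-profile/long-time-perturbation machinery, modelled on \cite{Kenig2006, Killip2010}, adapted to the coupled system. After time translation, replace $\b{\rm u}_n$ by $\b{\rm u}_n(\cdot+t_n)$ so that $t_n=0$; the hypothesis \eqref{S_n} becomes $S_{[0,\infty)}(\b{\rm u}_n)=S_{(-\infty,0]}(\b{\rm u}_n)=\infty$, and the goal is to show that $\{\b{\rm u}_n(0)\}$ has a subsequence converging in ${\rm\dot H}^1$ modulo the scaling/translation (and, in the non-radial mass-resonance case, Galilean) symmetries. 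The sequence $\{\b{\rm u}_n(0)\}$ is bounded in ${\rm\dot H}^1$ since $\sup_n\sup_t H(\b{\rm u}_n(t))\to H_c<\infty$, so Theorem \ref{lpd} supplies profiles $\b{\rm\phi}^j$, parameters $(\lambda_n^j,t_n^j,x_n^j)$ with pairwise asymptotic orthogonality \eqref{asy}, and a remainder $\b{\rm\Phi}_n^M$ satisfying \eqref{remainder1}--\eqref{ape2}.

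Next I would attach to each profile $\b{\rm\phi}^j$ its nonlinear profile $\b{\rm v}^j$ as in Section~\ref{np} (using Definition \ref{nlp}), and form rescaled/translated/time-shifted nonlinear profiles $\b{\rm v}^j_n$. The Pythagorean identity \eqref{HPe} together with \eqref{H>0} gives $\sum_j H(\b{\rm\phi}^j)\le H_c$, and in particular $H(\b{\rm\phi}^j)\le H_c$ for each $j$. Using the energy trapping in Corollary \ref{Energy trapping}, together with Proposition \ref{coer}, every profile with $H(\b{\rm\phi}^j)<H(\b{\rm W})$ produces a global nonlinear evolution with finite scattering size (since the scattering theory below $H(\b{\rm W})$ is precisely the assertion of Theorem \ref{classification}). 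The dichotomy is then: either every $\b{\rm v}^j$ has finite $S_{\R}$, or at least one $\b{\rm v}^{j_0}$ has infinite scattering size on one side. I would argue that the first alternative is impossible: assembling the approximate solution
\begin{equation*}
\tilde{\b{\rm u}}^M_n(t):=\sum_{j=1}^M \b{\rm v}^j_n(t)+\verb"S"(t)\b{\rm\Phi}^M_n,
\end{equation*}
the asymptotic orthogonality \eqref{asy} forces $L^4_{t,x}$ decoupling of the profiles, so $\|\tilde{\b{\rm u}}^M_n\|_{\mathcal{S}({\rm\dot H}^1)}\lesssim 1$ uniformly in $M,n$; a direct computation shows the error $\b{\rm e}^M_n:=(i\partial_t+A)\tilde{\b{\rm u}}^M_n+\b{\rm f}(\tilde{\b{\rm u}}^M_n)$ is small in the perturbation norm once $M$ is large and $n$ is large, and $\verb"S"(t)(\b{\rm u}_n(0)-\tilde{\b{\rm u}}^M_n(0))\to 0$ by \eqref{remainder1}. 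Then the long-time perturbation Proposition \ref{pt} gives $S_{\R}(\b{\rm u}_n)\lesssim 1$, contradicting \eqref{S_n}.

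Therefore some nonlinear profile $\b{\rm v}^{j_0}$ fails to scatter on one side. By \eqref{HPe}, $H(\b{\rm\phi}^{j_0})\le H_c$; by the minimality definition implicit in $H_c$ (the infimum of kinetic energies of non-scattering solutions arising in the usual reduction that precedes Theorem \ref{minH}), we must have $H(\b{\rm\phi}^{j_0})=H_c$, which via \eqref{HPe} forces all other profiles to be trivial and $H(\b{\rm\Phi}^M_n)\to 0$. Thus in the decomposition \eqref{pro} only one profile survives and the remainder vanishes in ${\rm\dot H}^1$. It remains to rule out $t_n^{j_0}/(\lambda_n^{j_0})^2\to\pm\infty$: if it diverged, the associated nonlinear profile would be asymptotic in ${\rm\dot H}^1$ to a free solution on one side and hence have vanishing scattering size there, forcing $S_{[0,\infty)}(\b{\rm u}_n)$ or $S_{(-\infty,0]}(\b{\rm u}_n)$ to be bounded, again contradicting \eqref{S_n}. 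Hence we may take $t_n^{j_0}\equiv 0$, and \eqref{pro} reduces to $\b{\rm u}_n(0)=\mathcal{T}_{\lambda_n^{j_0}}\b{\rm\phi}^{j_0}+o_{{\rm\dot H}^1}(1)$, which is precisely convergence modulo symmetries.

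\medskip

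\noindent The main obstacle I expect is step three: verifying that the superposition $\tilde{\b{\rm u}}^M_n$ is a good approximate solution, i.e., that the quadratic nonlinear error $\b{\rm f}(\sum_j\b{\rm v}^j_n+\verb"S"(t)\b{\rm\Phi}^M_n)-\sum_j\b{\rm f}(\b{\rm v}^j_n)$ is small in the dual Strichartz norm $\Vert\nabla\,\cdot\,\Vert_{\mathcal{S}'({\rm L}^2)}$ of Proposition \ref{pt}. The coupled nonlinearity $\b{\rm f}(\b{\rm u})=(v\overline u,u^2)^T$ produces cross terms $\b{\rm v}^j_n\b{\rm v}^\ell_n$ with $j\ne\ell$ and remainder-times-profile terms; controlling them requires carefully exploiting \eqref{asy} via changes of variable in the scaling/translation parameters, plus the $L^4_{t,x}$ smallness \eqref{remainder1} for the remainder interactions. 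The absence of a Galilean parameter in \eqref{asy} is consistent with the radial / mass-resonance framework of Theorem \ref{classification}; in the non-radial mass-resonance case one additionally carries a $\xi_n^j$ parameter and uses the Galilean invariance described in the introduction to normalise it.
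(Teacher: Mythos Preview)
The paper does not actually prove Proposition~\ref{PS}; it merely records the statement with a citation to \cite{Visan2012} and then uses it as a black box in the proof of Theorem~\ref{minH}. Your outline is precisely the standard Kenig--Merle/Killip--Visan argument that the cited reference carries out (adapted to the quadratic system via the tools of Section~\ref{LPD}), so in that sense there is nothing to ``compare'' --- you have supplied the proof the paper omits.

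That said, one step in your sketch is circular as written. You justify the scattering of each nonlinear profile by saying ``the scattering theory below $H(\b{\rm W})$ is precisely the assertion of Theorem~\ref{classification}''. But Theorem~\ref{classification} is the end goal; Proposition~\ref{PS} is a lemma on the way to it, so you may not invoke Theorem~\ref{classification} here. The correct mechanism is the one you yourself mention two sentences later: the \emph{definition} of $H_c$ in \eqref{H_c}. Concretely, if more than one profile is nontrivial (or the remainder carries nonzero $\dot H^1$ mass), then \eqref{HPe} forces each $H(\b{\rm\phi}^j)\le H_c-\alpha(j_0)<H_c$; the energy decoupling \eqref{EPe} together with Lemma~\ref{E>0} and Corollary~\ref{Energy trapping} then propagates this to $\sup_t H(\b{\rm v}^j)<H_c$, and the definition of $H_c$ (not Theorem~\ref{classification}) gives $S_\R(\b{\rm v}^j)<\infty$. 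With that correction, your perturbation argument via Proposition~\ref{pt} and the exclusion of $t_n^{j_0}/(\lambda_n^{j_0})^2\to\pm\infty$ go through exactly as you describe.
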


Before the proof of Theorem \ref{minH},  we introduce a notation $L(H)$ by
\[
L(H):= \sup \left\{ S_I (\b{\rm u}) ~ \Big\vert ~ \b{\rm u} : I \times \R^6 \to \C^2, ~ \sup_{t \in I} H(\b{\rm u}(t)) \le H \right\}
\] for any $ 0 \le H \le H(\b{\rm W}) $, and here the supremum is taken over all solutions $ \b{\rm u} : I \times \R^6 \to \C^2 $ to \eqref{NLS system} obeying $ \sup_{t \in I} H(\b{\rm u}(t)) \le H $.
It is easy to see that $ L : [0, H(\b{\rm W})] \to [0, \infty] $ is non-decreasing and satisfies $ L(H(\b{\rm W})) = \infty $.

 On the other hand, by the global existence and scattering result of small data, Theorem \ref{sd} and Proposition \ref{sc}, we know
\[
L(H) \le C H^2, \quad \forall ~ H < \delta_{sd},
\]
where $ \delta_{sd} > 0 $ is the threshold arose in Theorem \ref{sd}. By the long time perturbation theory and Proposition \ref{pt}, we know that $ L $ is a continuous function, which suggests there exists a unique critical kinetic energy $ H_c $ such that
\begin{equation}
H_c \ge \delta_{sd} > 0, \qquad \text{and} \qquad L(H) \left\{
\begin{aligned}
< \infty, \quad H < H_c, \\
= \infty, \quad H \ge H_c.
\end{aligned}
\right.
\label{H_c}
\end{equation}
In particular, if $ \b{\rm u} : I \times \R^6 \to \C^2 $ is a maximal-lifespan solution to \eqref{NLS system} such that $ \sup_{t \in I} H(\b{\rm u}(t)) < H_c $, then $ \b{\rm u} $ is global and
\[
S_{\R}(\b{\rm u}) \le L \left( \sup_{t \in \R}H(\b{\rm u}(t)) \right) < \infty.
\]
Thus, the failure of Theorem \ref{classification} is equivalent to $ 0 < H_c < H(\b{\rm W}) $.

\begin{proof}[Proof of Theorem \ref{minH}]
Because of the failure of Theorem \ref{classification}, there exists a unique critical kinetic energy $ H_c $ satisfying \eqref{H_c}.
And we can find a sequence $ \b{\rm u}_n : I_n \times \R^6 \to \C^2 $ of solutions to \eqref{NLS system} with $ I_n $ compact, such that
\begin{equation}
\sup_{n \in \N} \sup_{t \in I_n} H(\b{\rm u}_n (t)) = H_c, \quad \text{and} \quad \lim_{n \to \infty} S_{I_n}(\b{\rm u}_n) = \infty.
\label{u=H_c}
\end{equation}
Without loss of generality, we can take all $ t_n \equiv 0 $ in \eqref{S_n}.
Otherwise, if $ t_n \in I_n $ be such that $ S_{\ge t_n}(\b{\rm u}_n) = S_{\le t_n}(\b{\rm u}_n) $, then let $ \b{\rm u}_n(t) := \b{\rm u}(t_n - t) $.

By Proposition \ref{PS} and passing to a subsequence if necessary,
we can find $ \{ \lambda_n \}_{n=1}^\infty \subset (0, +\infty) $ and a function $ \b{\rm u}_{c, 0} \in {\rm\dot H}^1(\R^6) $ such that
\[
\lim_{n \to \infty} \Vert \mathcal{T}_{\lambda_n} \b{\rm u}_n (0) - \b{\rm u}_{c, 0} \Vert_{{\rm\dot H}^1(\R^6)} = 0.
\]
Let $ \b{\rm u}_c : I_c \times \R^6 \to \C^2 $ be the maximal-lifespan solution to \eqref{NLS system} with initial data $ \b{\rm u}_c(0) = \b{\rm u}_{c, 0} $.
Proposition \ref{pt} shows that $ I_c \subset \liminf I_n $ and
\[
\lim_{n \to \infty} \Vert \mathcal{T}_{\lambda_n} \b{\rm u}_n - \b{\rm u}_c \Vert_{L_t^\infty(K, {\rm\dot H}_x^1(\R^6))} = 0, \quad \text{for all compact} ~ K \subset I_c.
\]
Thus by \eqref{u=H_c},
\begin{equation}
\sup_{t \in I} H(\b{\rm u}(t)) \le H_c.
\label{<H_c}
\end{equation}

We claim $ \b{\rm u}_c $ blows up both forward and backward in time.
Indeed, if $ \b{\rm u}_c $ does not blow up forward in time, then $ [0, \infty) \subset I_c $ and $ S_{\ge 0} (\b{\rm u}_c) < \infty $.
By Proposition \ref{pt}, this implies $ S_{\ge 0} (\b{\rm u}_n) = S_{\ge 0} (\mathcal{T}_{\lambda_n} \b{\rm u}_n) < \infty $ for sufficiently large $ n $, which is a contradiction with \eqref{u=H_c}.
A similar argument proves that $ \b{\rm u}_c $ blows up backward in time, which completes the proof of our claim.
Therefore, by \eqref{H_c},
\begin{equation}
\sup_{t \in I_c} H(\b{\rm u}_c(t)) \ge H_c.
\label{>H_c}
\end{equation}
Combining \eqref{>H_c} with \eqref{<H_c}, we can obtain
\begin{equation}
\sup_{t \in I_c} H(\b{\rm u}_c(t)) = H_c.
\label{=H_c}
\end{equation}

It remains to show that $ \b{\rm u}_c $ is almost periodic modulo symmetries, which reminds us of Proposition \ref{PS}.
Considering  an arbitrary sequence $ \{ \tau_n \}_{n=1}^\infty \subset I_c $,
thanks to the fact that $ \b{\rm u}_c $ blows up in both time directions, we have
\[
S_{\ge \tau_n} (\b{\rm u}_c) = S_{\le \tau_n} (\b{\rm u}_c) = \infty.
\]
By Proposition \ref{PS}, there exists a subsequence of $ \b{\rm u}_c (\tau_n) $, which is convergent in $ {\rm\dot H}_x^1 (\R^6) $ modulo symmetries.
This implies the orbit $ \mathcal{F}_c := \left\{ \mathcal{T}_{\lambda_n} \b{\rm u}_c(\tau_n) ~ \vert ~ \tau_n \in I_c \right\} $ is precompact in $ {\rm\dot H}_x^1 (\R^6) $ modulo symmetries.
By the precompacness of $ \mathcal{F}_c $ in $ {\dot H}^1(\R^6) $ and the definition of $ \mathcal{T}_{\lambda} $ in Theorem \ref{lpd},
\[
\mathcal{T}_{\lambda_n} \b{\rm u}_c(\tau_n, x) = \left( \frac{1}{(\lambda_n)^2} u_c \left( \tau_n, \frac{x - x(\tau_n)}{\lambda_n} \right), \frac{1}{(\lambda_n)^2} v_c \left( \tau_n, \frac{x - x(\tau_n)}{\lambda_n} \right) \right)^t,
\]
Recall Remark \ref{compactness in H^1}, we know for any $ \eta > 0 $, there exists a compactness modulus function $ C(\eta) > 0 $ such that
\[
\int_{\vert x - x(\tau_n) \vert \ge \frac{C(\eta)}{\lambda_n}} \left( \vert \nabla u_c(\tau_n, x) \vert^2 + \frac{\kappa}{2} \vert \nabla v_c(\tau_n, x) \vert^2 \right) {\rm d}x \le \eta, \qquad \forall ~ \tau_n \in I_c
\]
and
\[
\int_{\vert \xi \vert \ge C(\eta)\lambda_n} \left( \vert \xi \vert^2 \vert \widehat{u_c}(\tau_n, \xi) \vert^2 + \frac{\kappa}{2} \vert \xi \vert^2 \vert \widehat{v_c}(\tau_n, \xi) \vert^2 \right) {\rm d}\xi \le \eta, \qquad \forall ~ \tau_n \in I_c.
\]
Choose the frequency scale function $ \lambda(\tau_n) := \lambda_n $ in Definition \ref{almost periodic} and we complete the proof of Theorem \ref{minH}.
\end{proof}

By Remark \ref{compactness in H^1} and Theorem \ref{minH}, we have established the compactness of $ \{ \b{\rm u}_c \} \subset {\rm\dot H}^1(\R^6) $.
Therefore, we can describe them in a different way.

\begin{remark}
Similarly, one can use the strategy in \cite{Killip2013}  to  prove  Theorem \ref{enemies}.
Define $ {\rm osc}(T) := \inf_{t_0 \in J} \frac{\sup \{ \lambda(t) ~ \vert ~ t \in J ~ \text{and} ~ \vert t - t_0 \vert \le T [\lambda(t_0)]^{-2} \}}{\inf \{ \lambda(t) ~ \vert ~ t \in J ~ \text{and} ~ \vert t - t_0 \vert \le T [\lambda(t_0)]^{-2} \}} $ for any $ T > 0 $ and $ a(t_0) := \frac{\lambda(t_0)}{\sup \{ \lambda(t) ~ \vert ~ t \in J ~ \text{and} ~ t \le t_0 \}} + \frac{\lambda(t_0)}{\sup \{ \lambda(t) ~ \vert ~ t \in J ~ \text{and} ~ t \ge t_0 \}} $ for any $ t_0 \in J $.
Then the case of
\[
\lim_{T \to \infty} {\rm ost}(T) = \infty, ~ \inf_{t_0 \in J} a(t_0) = 0, ~ I_c \ne \R;
\]
and
\[
\lim_{T \to \infty} {\rm ost}(T) < \infty;
\]
and
\[
\lim_{T \to \infty} {\rm ost}(T) = \infty, ~ \inf_{t_0 \in J} a(t_0) = 0, ~ I_c = \R
\]
or
\[
\lim_{T \to \infty} osc(T) = \infty, ~ \inf_{t_0 \in J} a(t_0) > 0
\]
corresponds to the case of blowing-up solution in finite time, soliton-like solution, and low-to-high frequency cascade respectively.
\end{remark}

\section{Nonexistence of blowing-up solution in finite time}\label{nonradial}
In the remaining part, we show the scattering result in Theorem \ref{vs}.
First of all, we prove the nonexistence of blowing-up solution.

\begin{theorem}\label{death1}
There dose not exist a solution to \eqref{NLS system} which blows up in finite time.
\end{theorem}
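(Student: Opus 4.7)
The plan is to argue by contradiction: suppose $\b{\rm u}_c$ is a critical solution from Proposition \ref{enemies} that blows up forward in finite time, with $T_+ := T_+(\b{\rm u}_c) < \infty$ (the backward case being symmetric). The first step is to extract spatial concentration. By the small-data theory in Theorem \ref{sd}, if the frequency scale $\lambda(t)$ stayed bounded as $t \nearrow T_+$ one could pass to an $\dot H^1$-convergent subsequence of $\mathcal{T}_{\lambda(t_n)}\b{\rm u}_c(t_n)$ and extend the solution past $T_+$, contradicting blow-up; hence $\lambda(t)\to\infty$. Either radiality (which forces $x(t)\equiv 0$) or, in the non-radial mass-resonance case, the zero-momentum identity $P(\b{\rm u}_c)\equiv \b{\rm 0}$ of Proposition \ref{P=0} followed by a Galilean adjustment allows one to arrange that $x(t)$ is bounded and, after translation, $x(t)\to 0$. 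Combined with Definition \ref{almost periodic}, this yields: for every $\eta>0$ there exists $t_\eta<T_+$ with
\[
\int_{|x|\ge \eta}\bigl(|\nabla u_c(t,x)|^2+\tfrac{\kappa}{2}|\nabla v_c(t,x)|^2\bigr)\,dx \le \eta,\qquad \forall\, t\in(t_\eta,T_+).
\]

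Next I would set up a truncated virial. Fix $R\gg 1$ and a smooth radial $a\in C_c^\infty(\R^6)$ with $a(x)=|x|^2$ for $|x|\le R$, $a\equiv 0$ for $|x|\ge 2R$, and $|\nabla a|^2\lesssim a$; then define
\[
V_R(t) := \int_{\R^6} a(x)\bigl(2\kappa|u_c(t,x)|^2 + |v_c(t,x)|^2\bigr)\,dx.
\]
Using the Sobolev embedding $\dot H^1(\R^6)\hookrightarrow L^3(\R^6)$ together with H\"older's inequality on the effective support of $\b{\rm u}_c(t)$, one obtains $\|\b{\rm u}_c(t)\|_{L^2(B_r)}\lesssim r\,\|\b{\rm u}_c(t)\|_{L^3}$ for $r$ above the concentration scale, which with $\lambda(t)\to\infty$ gives $V_R(t)\to 0$; Cauchy--Schwarz and $|\nabla a|^2\lesssim a$ then force $|V_R'(t)|\lesssim H(\b{\rm u}_c)^{1/2}V_R(t)^{1/2}\to 0$ as well. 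Applying Lemma \ref{virial} to $V_R$, the principal contribution to $V_R''(t)$ is $8\kappa[2H(\b{\rm u}_c)-3R(\b{\rm u}_c)]$, which by Proposition \ref{coer} (valid under $E(\b{\rm u}_c)<E(\b{\rm W})$, $H(\b{\rm u}_c)<H(\b{\rm W})$, as furnished by Corollary \ref{Energy trapping}) is bounded below by $8\kappa\delta''\,H(\b{\rm u}_c)\ge 8\kappa\delta'' H_c$, and $H_c>0$ by Theorem \ref{minH} and \eqref{H_c}. The remainder terms, involving $\Delta a$ and $\Delta\Delta a$ and hence supported in the annulus $R\le|x|\le 2R$, are controlled by the concentration at the origin and can be made arbitrarily small by choosing $R$ large and $t$ close to $T_+$. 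This produces
\[
V_R''(t) \ge c\,H_c > 0,\qquad \forall\, t\in(t_\eta,T_+),
\]
for some fixed $c>0$.

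The contradiction follows from a Taylor expansion: for $t\in(t_\eta,T_+)$,
\[
V_R(T_+) - V_R(t) - V_R'(t)(T_+ - t) = \int_t^{T_+}(T_+-s)\,V_R''(s)\,ds \ge \tfrac{c\,H_c}{2}(T_+-t)^2,
\]
but $V_R(T_+)=0$, $V_R(t)\ge 0$, and $|V_R'(t)|(T_+-t)=o(T_+-t)$ as $t\to T_+$, forcing the left-hand side to be $o(T_+-t)$, which is incompatible with the quadratic lower bound once $T_+-t$ is small. Hence no such $\b{\rm u}_c$ exists. The main technical obstacle I anticipate is the first step: rigorously controlling $x(t)$ so that the concentration genuinely occurs at a single point (so the cutoff $a$ captures all the mass), together with a clean quantitative handling of the truncation errors in the virial computation. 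Both depend delicately on the almost-periodic structure of Definition \ref{almost periodic} and on the $\dot H^1\hookrightarrow L^3$ embedding which is peculiar to $d=6$.
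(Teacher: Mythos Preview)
Your proposal has two genuine gaps.

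\textbf{The final contradiction is illusory.} You arrive at
\[
\frac{c\,H_c}{2}(T_+-t)^2 \;\le\; -V_R(t) - V_R'(t)(T_+-t) \;\le\; |V_R'(t)|\,(T_+-t) \;=\; o(T_+-t),
\]
and declare this incompatible. But $(T_+-t)^2 = o(T_+-t)$ as $t\to T_+$, so there is no contradiction whatsoever. Indeed the toy function $V_R(t)=\tfrac{c}{2}(T_+-t)^2$ satisfies every one of your hypotheses: $V_R\ge 0$, $V_R(T_+^-)=0$, $V_R'(T_+^-)=0$, $V_R''\equiv c>0$. Convexity together with vanishing at the endpoint simply does not force anything. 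This is the place where your argument collapses regardless of the radial/non-radial distinction.

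\textbf{The control of $x(t)$ in the non-radial case is unjustified.} You invoke Proposition~\ref{P=0} to get zero momentum and then a Galilean shift. But Proposition~\ref{P=0} carries the hypothesis $\b{\rm u}_c\in L_t^\infty(I_c,{\rm H}_x^1)$, i.e.\ an $L^2$ bound, which in this paper is supplied only by the negative regularity Theorem~\ref{negative regularity}; that theorem in turn requires $I_c=\R$ and $\inf_t\lambda(t)\ge 1$, precisely the opposite of the finite-time blow-up scenario. Without an $L^2$ bound the Galilean boost does not even act on $\dot H^1$ in a kinetic-energy-preserving way, so the whole reduction of $x(t)$ is unavailable here.

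The paper's argument avoids both issues by a different mechanism. It uses a truncated \emph{mass} (cutoff $a\equiv 1$ on $B_R$), not a truncated virial, and only the first derivative. The key observation is that $\lambda(t)\to\infty$ alone (no information on $x(t)$) forces $\int_{|x|<R}|\b{\rm u}_c(t,x)|^2\,dx\to 0$ for every fixed $R$: split into $|x-x(t)|\le \eta R$ (H\"older against the bounded $L^3$ norm) and $|x-x(t)|>\eta R$ (almost periodicity). Then Hardy's inequality gives $|V_R'|\lesssim 1$ uniformly, so integrating from $t_1$ to $t_2\to T_+$ yields $V_R(t_1)\lesssim |T_+-t_1|$; sending $R\to\infty$ recovers $M(\b{\rm u}_{c,0})\lesssim |T_+-t_1|$, and finally $t_1\to T_+$ gives $M(\b{\rm u}_{c,0})=0$, contradicting $\b{\rm u}_c\not\equiv\b{\rm 0}$. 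No second derivative, no coercivity, no tracking of $x(t)$.
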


\begin{proof}
 We argue by contradiction and suppose $ \b{\rm u}_c : I_c \times \R \to \C^2 $ is a finite-time blowing-up solution to \eqref{NLS system} with the maximal interval of existence $ I_c = (-T_-(\b{\rm u}_c), T_+(\b{\rm u}_c)) $. Without loss of generality, we may assume $ \b{\rm u}_c $ blows up forward in time, i.e. $ T_+(\b{\rm u}_c) < \infty $.

First of all, we claim that
\begin{equation}
\liminf_{t \to T_+(\b{\rm u}_c)} \lambda(t) = \infty.
\label{lam=infty}
\end{equation}
Otherwise, if $ \liminf_{t \to T_+(\b{\rm u}_c)} \lambda(t) < \infty $, we can choose $ \{ t_n \}_{n=1}^\infty \subset I_c $ such that $ t_n \to T_+(\b{\rm u}_c) $ and define $ \b{\rm u}_n : I_n \times \R^6 \to \C^2 $ such that
\[
\b{\rm u}_n (t, x) := \frac1{[\lambda(t_n)]^2} \b{\rm u}_c \left( t_n + \frac{t}{[\lambda(t_n)]^2}, x(t_n) + \frac{x}{\lambda(t_n)} \right),
\]
where $ I_n := \left\{ t ~ \big\vert ~ t_n \in I_c, t_n + \frac{t}{[\lambda(t_n)]^2} \in I_c \right\} $.
It is easy to see that $ 0 \in I_n $ and $ \{ \b{\rm u}_n (t, x) \}_{n=1}^\infty $ is a sequence of solutions to \eqref{NLS system}.
Theorem \ref{minH} tells us the fact that $ \b{\rm u}_c $ is almost periodic modulo symmetries,  combining with Remark \ref{compactness in H^1}, which  implies $ \{ \b{\rm u}_n \}_{n=1}^\infty \subset {\rm\dot H}^1(\R^6) $ is precompact.
Thus, after passing to a subsequence, there exists $ \b{\rm u}_0 $ such that
\[
\lim_{n \to \infty} \Vert \b{\rm u}_n(0) - \b{\rm u}_0 \Vert_{{\rm\dot H}_x^1(\R^6)} = 0.
\]
 We can get $ \b{\rm u}_0 \not\equiv \b{\rm 0} $, by Sobolev embedding and the conservation of energy, from $ H(\b{\rm u}_n(0)) = H(\b{\rm u}_c(t_n)) $ and $ \b{\rm u}_c \not\equiv \b{\rm 0} $.
Let $ \b{\rm u}(t, x) $ be the solution to \eqref{NLS system} associated to initial data $ \b{\rm u}_0(x) $.
On one hand, by Theorem \ref{sd} and Proposition \ref{pt}, we can take $n$ so large that there exists a compact interval $ J \subset I_c $, on which $ \b{\rm u}_n $ is well-posed with finite scattering size.
So, $ \b{\rm u}_c $ is well-posed with finite scattering size on the interval $ \left\{ t_n + \frac{t}{[\lambda(t_n)]^2} ~ \big\vert ~ t \in J \right\} $.
On the other hand, we know that $ t_n \to T_+(\b{\rm u}_c) $ and $ \liminf_{n \to \infty} \lambda(t_n) = \liminf_{t \to T_+(\b{\rm u}_c)} \lambda(t) < \infty $, combining with $ T_+(\b{\rm u}_c) < \infty $, which means $ \b{\rm u}_c $ is well-posed with finite scattering size on the interval $ \left\{ t_n + \frac{t}{[\lambda(t_n)]^2} ~ \big\vert ~ t \in J \right\} \cap (T_+(\b{\rm u}_c), +\infty) $, which is nonempty.
That is impossible since $ \b{\rm u}_c $ blows up forward in time, i.e., $ \exists ~ t_1 \in I_c $ such that $ S_{[t_1, T_+(\b{\rm u}_c))} (\b{\rm u}_c) = \infty $.
As a consequence, \eqref{lam=infty} holds.

Set $ \b{\rm u}_c := (u_c, v_c)^t $.
For any $ 0 < \eta < 1 $ and $ t \in I_c $, by H\"older's inequality, we have
\begin{equation*}
\begin{aligned}
\int_{\vert x \vert < R} \vert u_c (t, x) \vert^2 {\rm d}x
\le & ~ \int_{\vert x - x(t) \vert \le \eta R} \vert u_c(t, x) \vert^2 {\rm d}x + \int_{\vert x \vert \le R, \vert x - x(t) \vert > \eta R} \vert u_c(t, x) \vert^2 {\rm d}x \\
\le & ~ \omega(6) \eta^2 R^2 \Vert u_c \Vert_{L_x^3(\R^6)}^2 + \omega(6) R^2 \left( \int_{\vert x - x(t) \vert > \eta R} \vert u_c(t, x) \vert^3 {\rm d}x \right)^{\frac23},
\end{aligned}
\end{equation*}
where $ \omega(6) $ is a constant standing for the volume of unit ball in $ \R^6 $.
$ \forall ~ \varepsilon > 0 $, let $ \zeta = \zeta(\varepsilon, R) = 2^{-\frac94} \varepsilon^{\frac32} R^{-3} > 0 $.
Thanks to \eqref{lam=infty}, there exists $ \eta = \eta(\varepsilon, R, \zeta) $ such that $ 0 < \eta \le 2^{-\frac34} [H(\b{\rm W})]^{-\frac12} \varepsilon^{\frac12} R^{-1} $ and $ C(\eta) \ge \lambda(t) C(\zeta) $.
Thus,
\[
\eta^2 R^2 \Vert u_c \Vert_{L_x^3(\R^6)}^2 \le \eta^2 R^2 \Vert u_c \Vert_{{\dot H}_x^1(\R^6)}^2 < \eta^2 R^2 H(\b{\rm W}) \le \frac{\varepsilon}{2}
\]
and
\begin{equation*}
\begin{aligned}
R^2 \left( \int_{\vert x - x(t) \vert > \eta R} \vert u_c(t, x) \vert^3 {\rm d}x \right)^{\frac23}
\le & ~ R^2 \left( \int_{\vert x - x(t) \vert > C(\zeta) \frac{\eta R \lambda(t)}{C(\eta)}} \vert \nabla u_c(t, x) \vert^2 {\rm d}x \right)^{\frac23} \\
< & ~ R^2 \zeta^{\frac23} = \frac{\varepsilon}{2},
\end{aligned}
\end{equation*}
which implies
\begin{equation}
\limsup_{t \to \sup I_c} \int_{\vert x \vert < R} \vert u_c (t, x) \vert^2 {\rm d}x = 0, \quad \forall ~ R > 0.
\label{M=0}
\end{equation}
A similar way can help us to obtain
\begin{equation}
\limsup_{t \to \sup I_c} \int_{\vert x \vert < R} \vert v_c (t, x) \vert^2 {\rm d}x = 0, \quad \forall ~ R > 0.
\label{M=00}
\end{equation}

Let $ a(x) $ be a radial smooth function in Lemma \ref{virial} satisfying
\[
a(x) = \left\{
\begin{aligned}
1, \qquad & \vert x \vert \le R, \\
0, \qquad & \vert x \vert \ge 2R,
\end{aligned}
\right.
\]
and $ \vert \nabla a(x) \vert \lesssim \frac{1}{\vert x \vert}$.
Let
\[
V_R (t) = \int_{\mathbb{R}^6} \left( 2 \kappa \vert u \vert^2 + \vert v \vert^2 \right) a(x) {\rm d}x.
\]
One one hand, combining \eqref{M=0} with \eqref{M=00}, we know
\begin{equation}
\limsup_{t \to \sup I_c} V_R (t) = \limsup_{t \to \sup I_c} \int_{\mathbb{R}^6} \left( 2 \kappa \vert u_c (t, x) \vert^2 + \vert v_c (t, x) \vert^2 \right) a(x) {\rm d}x = 0.
\label{M=01}
\end{equation}
On the other hand, by Hardy's inequality and \eqref{u_c}, we get
\[
\begin{aligned}
\vert V_R^\prime (t) \vert & ~ = 2 \kappa \left\vert \int_{\R^6} (2\overline{u}_c\nabla u_c + \overline{v}_c\nabla v_c) \cdot \nabla a(x) {\rm d}x \right\vert \\
& ~ \lesssim_\kappa\Vert \nabla \b{\rm u}_c \Vert_{{\rm L}^2(\R^6)} \left\Vert \frac{\b{\rm u}_c}{\vert x \vert} \right\Vert_{{\rm L}^2(\R^6)} \\
& ~ \lesssim_\kappa[H(\b{\rm u}_c)]^2 < [H(\b{\rm W})]^2.
\end{aligned}
\]
So, Newton-Leibniz's formula tells us,
\[
V_R(t_1) \lesssim_\kappa I_R(t_2) + \vert t_1 - t_2 \vert [H(\b{\rm W})]^2, \qquad \forall ~ t_1, t_2 \in I_c.
\]
Let $ t_2 \to T_+(\b{\rm u}_c) $ and using \eqref{M=01}, we obtain
\[
V_R(t_1) \lesssim_\kappa \vert \sup I_c - t_1 \vert [H(\b{\rm W})]^2, \qquad \forall ~ t_1 \in I_c.
\]
By the conservation of mass and the above estimate, we have
\begin{equation*}
\begin{aligned}
M(\b{\rm u}_{c, 0}) = & ~ M(\b{\rm u}_c(t_1)) \sim \int_{\R^6} 2 \kappa \vert u_c (t_1, x) \vert^2 + \vert v_c (t_1, x) \vert^2 {\rm d}x \\
= & ~ \lim_{\R \to \infty} V_{R}(t_1) \lesssim_\kappa \vert \sup I_c - t_1 \vert [H(\b{\rm W})]^2, \qquad \forall ~ t_1 \in I_c.
\end{aligned}
\end{equation*}
Let $ t_1 \to T_+(\b{\rm u}_c) $, we know $ \b{\rm u}_{c, 0} \equiv \b{\rm 0} $.
And thus, by the uniqueness of the solution to \eqref{NLS system}, $ \b{\rm u}_c \equiv \b{\rm 0} $, which is a contradiction with \eqref{u_c}.
\end{proof}

\section{Nonexistence of global blowing-up solution}\label{non}
Under the assumption of Theorem \ref{classification}, we can prove $ H_c = 0 $ for any critical solution $ \b{\rm u}_c $ to \eqref{NLS system} and show the nonexistence of the ones.
We prove the negative regularity, zero momentum and state the control of spatial center function $ x(t) $ of soliton-like solutions in Subsection \ref{properties}, which are very powerful in  excluding of our enemies.
According to the unique properties of soliton-like solution and low-to-high frequency cascade, we make full use of Theorem \ref{negative regularity} and Corollary \ref{x(t)} to rule out them one by one in Subsection \ref{D1} and Subsection \ref{D2}.

\subsection{Negative regularity}\label{properties}
We show the proof of Theorem \ref{negative regularity} in this subsection. To begin with, we present a special property of critical solution $ \b{\rm u}_c $.

\begin{proposition}[Reduced Duhamel formulas, \cite{Killip2013, Merle1998}]\label{reduced}
Let $ \b{\rm u}_c : I_c \times \R^6 \to \C^2 $ be a maximal-lifespan almost periodic modulo symmertries solution to \eqref{NLS system}.
Then $ \mathcal{S}(-t) \b{\rm u}_c $ converges weakly to $ \b{\rm 0} $ in $ {\rm\dot H}_x^1 $ as $ t \to \sup I_c $ or $ t \to \inf I_c $.
In particular, we have the `reduced' Duhamel formulas
\begin{equation*}
\begin{aligned}
\b{\rm u}_c(t) & ~ = i \lim_{T \to \sup I_c} \int_t^T \verb"S"(t-s) \b{\rm f}(\b{\rm u}_c(s)) {\rm d}s \\
& ~ = -i \lim_{T \to \inf I_c} \int_T^t \verb"S"(t-s) \b{\rm f}(\b{\rm u}_c(s)) {\rm d}s,
\end{aligned}
\end{equation*}
where the limits are to be understood in the weak $ {\rm\dot H}_x^1 $ topology.
\end{proposition}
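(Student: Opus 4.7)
The two reduced Duhamel identities in the proposition are equivalent to the single assertion that $\verb"S"(-t)\b{\rm u}_c(t) \rightharpoonup \b{\rm 0}$ weakly in ${\rm\dot H}^1$ as $t \to \sup I_c$ (and respectively $t \to \inf I_c$). Indeed, applying the ${\rm\dot H}^1$-isometry $\verb"S"(-t)$ to the standard Duhamel identity $\b{\rm u}_c(t) = \verb"S"(t-T)\b{\rm u}_c(T) + i\int_T^{t} \verb"S"(t-s) \b{\rm f}(\b{\rm u}_c(s))\,{\rm d}s$ and sending $T \to \sup I_c$, one recovers the forward reduced formula precisely when the endpoint term $\verb"S"(-T)\b{\rm u}_c(T)$ vanishes weakly; the backward formula is analogous. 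Hence the task reduces to proving this weak convergence.

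Almost periodicity gives a uniform ${\rm\dot H}^1$ bound on $\{\verb"S"(-t)\b{\rm u}_c(t)\}$, so by Banach--Alaoglu every sequence $t_n \to \sup I_c$ admits a subsequence (still labelled $t_n$) with a weak limit $\b{\rm v} \in {\rm\dot H}^1$. The plan is to assume $\b{\rm v} \neq \b{\rm 0}$ and derive a contradiction with the forward blow-up $S_{[t_n, \sup I_c)}(\b{\rm u}_c) = \infty$. Apply the linear profile decomposition of Theorem \ref{lpd} to $\b{\rm \phi}_n := \verb"S"(-t_n)\b{\rm u}_c(t_n)$; by the weak convergence together with the asymptotic orthogonality \eqref{asy}, we may arrange $\b{\rm \phi}^1 = \b{\rm v}$ with trivial parameters $(\lambda_n^1, x_n^1, t_n^1) = (1, 0, 0)$. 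The nonlinear profile construction of Subsection \ref{np} then furnishes a maximal-lifespan solution $\b{\rm V}$ to \eqref{NLS system} associated to $\b{\rm v}$ and defined on an interval containing $0$.

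The Pythagorean expansion \eqref{HPe} gives $H(\b{\rm v}) \le H_c$, with equality forcing every other profile and the remainder to vanish — but then $\b{\rm u}_c(t_n) = \verb"S"(t_n)\b{\rm v} + r_n$ with $r_n \to \b{\rm 0}$ strongly in ${\rm\dot H}^1$, which the dispersive decay of $\verb"S"(t_n)\b{\rm v}$ renders incompatible with the uniform spatial concentration imposed by almost periodicity. Thus $H(\b{\rm v}) < H_c$ strictly, and the same holds for every profile; by the definition of the critical energy in \eqref{H_c}, each associated nonlinear profile has finite scattering size. Combining the long-time perturbation theory (Proposition \ref{pt}) with the decoupling of profiles and the smallness of the remainder \eqref{remainder1}, one reconstructs $\b{\rm u}_c$ on a right-neighborhood of $t_n$ with $S_{[t_n, \sup I_c)}(\b{\rm u}_c) < \infty$, contradicting the blow-up. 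The main obstacle lies in handling profiles whose time shifts $t_n^j$ diverge: their nonlinear evolutions start effectively in the scattering regime, and one must dovetail these scattering states with the evolution of $\b{\rm V}$ while controlling the perturbative error uniformly on the (possibly unbounded) interval $[t_n, \sup I_c)$. Once this bookkeeping is set up, the second identity follows by an identical argument with time reversed.
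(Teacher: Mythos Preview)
The paper does not prove this proposition at all; it simply cites \cite{Killip2013, Merle1998}. In those references the argument is short and direct: one shows $\langle \b{\rm u}_c(t_n), \verb"S"(t_n)\psi\rangle_{\dot H^1}\to 0$ for every test function $\psi$ by splitting into two cases. If $\sup I_c<\infty$, one first proves (as in the proof of Theorem~\ref{death1}, claim \eqref{lam=infty}) that $\lambda(t_n)\to\infty$, so the $\dot H^1$ mass of $\b{\rm u}_c(t_n)$ escapes to high frequency while $\verb"S"(t_n)\psi$ keeps the fixed frequency profile of $\psi$; the pairing vanishes. If $\sup I_c=\infty$, one exploits that the free group itself goes to zero weakly: for $\psi$ in a dense class the dispersive estimate gives $\verb"S"(t_n)\psi\to 0$ in $L^p$, $p>2$, and almost periodicity supplies enough compactness on $\b{\rm u}_c(t_n)$ to conclude. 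No profile decomposition, no $H_c$, no perturbation theory.

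Your route is therefore genuinely different and considerably heavier --- you are essentially re-running the Palais--Smale machinery of Proposition~\ref{PS}. Two concrete issues make the write-up incomplete as it stands. First, you import the quantities $H_c$ and the forward blow-up $S_{[t_n,\sup I_c)}(\b{\rm u}_c)=\infty$; these are properties of the \emph{specific} minimal solution of Theorem~\ref{minH}, not hypotheses of the proposition, which is stated for an arbitrary maximal-lifespan almost periodic solution. Second, your treatment of the case $H(\b{\rm v})=H_c$ is not right: if $\sup I_c<\infty$ there is no ``dispersive decay of $\verb"S"(t_n)\b{\rm v}$'' to invoke, and the correct contradiction is with maximality (strong convergence of $\b{\rm u}_c(t_n)$ lets you extend past the endpoint via local theory); if $\sup I_c=\infty$ the actual mechanism is that $\|\verb"S"(\tau)\b{\rm v}\|_{L^4_{\tau,x}([t_n,\infty))}\to 0$, whence small-data theory gives $S_{[t_n,\infty)}(\b{\rm u}_c)<\infty$ --- a contradiction with blow-up, not with spatial concentration. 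If you restrict the statement to the critical solution of Theorem~\ref{minH} and repair these two points, your argument can be made to work, but the direct proof in the cited references is both shorter and applies at the stated level of generality.
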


Now, we turn to show the negative regularity of global almost periodic solution $ \b{\rm u}_c $ to \eqref{NLS system}, which plays an important role in the preclusion of soliton-like solutions and low-to-high frequency cascades.

\begin{proof}[{\bf Proof of Theorem \ref{negative regularity}.}]
  Since $ \b{\rm u}_c(t) $ is  almost periodic modulo symmetries with $ \lambda(t) \geq 1 $, then there exists $ N_0 = N_0(\eta) $ such that
\begin{align}\label{Small-compact}
\|\nabla (P_{\leq N_0} \b{\rm u}_c)\|_{{\rm L}_t^\infty(\R, {\rm L}_x^2(\R^6))} \leq \eta, \qquad \forall ~ \eta > 0.
\end{align}
Denote $ A(N) $ by
$$ A(N) := N^{-\frac12} \| P_{N} \b{\rm u}_c \|_{{\rm L}_t^\infty(\R, {\rm L}_x^4(\R^6))}, $$
for $ N \leq 10 N_0 $.

\begin{lemma}[Recurrence]\label{recurrence}
Let $A(N)$ be defined as above, for any $ N \leq 10 N_0 $, we have
\begin{equation}
A(N) \lesssim_{\b{\rm u}_c} \left(\frac{N}{N_0}\right)^{\frac12}+\eta\sum_{\frac{N}{10}\leq N_1\leq N_0}\left(\frac{N}{N_1}\right)^{\frac12}A(N_1)+\eta\sum_{N_1<\frac{N}{10}}\left(\frac{N_1}{N}\right)^{\frac12}A(N_1).
\label{AN}
\end{equation}
\end{lemma}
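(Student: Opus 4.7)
The plan is to follow the Killip--Visan bootstrap scheme: apply the reduced Duhamel formula of Proposition \ref{reduced} to $P_N \b{\rm u}_c$, expand the quadratic nonlinearity by Littlewood--Paley paraproducts, and harvest the smallness \eqref{Small-compact} (understood as the high-frequency tail bound that follows from the almost periodicity together with $\lambda(t)\ge 1$) to obtain the three displayed terms of \eqref{AN}. After taking the $L^4_x$ norm of the reduced Duhamel integral, I would invoke the dispersive estimate of Lemma \ref{disper} in the $L^{4/3}\to L^4$ form (decay $|t-s|^{-3/2}$) on the far slab $|t-s|\gtrsim N^{-2}$ and an $L^2$-based Bernstein substitute (equivalently an endpoint Strichartz pair) on the near slab, thereby bounding $N^{-1/2}\|P_N \b{\rm u}_c\|_{L^\infty_t L^4_x}$ by a time-supremum of a Lebesgue norm of $P_N \b{\rm f}(\b{\rm u}_c)$ with the exponent dictated by the scaling of $A(N)$.

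Next, the quadratic nonlinearity $\b{\rm f}(\b{\rm u}) = (v\overline{u}, u^2)^T$ is expanded by paraproduct as $P_N(\b{\rm u}_c\cdot\b{\rm u}_c) = \sum_{N_1,N_2} P_N(P_{N_1}\b{\rm u}_c \cdot P_{N_2}\b{\rm u}_c)$, with Fourier support forcing the sum to concentrate on either the high--low regime ($\max(N_1,N_2)\sim N$, $\min(N_1,N_2)\lesssim N$) or the high--high-to-low regime ($N_1\sim N_2\gtrsim N$). Each bilinear piece is controlled by H\"older's inequality, placing the higher-frequency factor in $L^4_x$ to invoke $A(\cdot)$ and the lower factor in $L^2_x$ converted via Bernstein into $N_j^{-1}\|\nabla P_{N_j}\b{\rm u}_c\|_{L^2_x}$. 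Whenever a $\dot H^1$-factor lands on a tail piece (relevant frequency $>N_0$), the smallness \eqref{Small-compact} contributes a factor $\eta$; otherwise the uniform energy bound $H(\b{\rm u}_c)^{1/2}\lesssim 1$ yields a universal constant. Contributions in which every participating frequency lies above $N_0$ are summed to the absolute term $(N/N_0)^{1/2}$ once the Bernstein costs of descending back to frequency $\sim N$ are collected, while each remaining frequency $N_1\le N_0$ is retained as $A(N_1)$; the sub-regime $N_1\ge N/10$ generates the weight $(N/N_1)^{1/2}$ and $N_1<N/10$ generates $(N_1/N)^{1/2}$ after the Bernstein exponents are aligned.

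The main obstacle is the bookkeeping in the high--high-to-low regime $N_1\sim N_2\gtrsim N$: the $N_2$-summation must be absorbed without reintroducing an $A(N)$ term on the right-hand side, which forces one to assign the $\eta$-small factor consistently to whichever frequency exceeds $N_0$ and to perform the $N_2$-sum so that the surviving Bernstein exponents deliver exactly the advertised weight $(N/N_1)^{1/2}$. Secondary care is needed on the near-diagonal time slab $|t-s|\lesssim N^{-2}$, where the $|t-s|^{-3/2}$ dispersive kernel is not integrable, to avoid logarithmic losses that would later obstruct the Gronwall-type iteration of Lemma \ref{Gronwall} used to convert \eqref{AN} into the negative-regularity conclusion of Theorem \ref{negative regularity}.
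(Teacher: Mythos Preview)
Your overall scheme matches the paper's: apply the reduced Duhamel formula, split the time integral at $|t-s|=N^{-2}$, use the $L^{4/3}\to L^4$ dispersive decay on the far piece and Bernstein on the near piece to arrive at
\[
A(N)\lesssim N^{1/2}\bigl\|P_N\b{\rm f}(\b{\rm u}_c)\bigr\|_{L_t^\infty L_x^{4/3}},
\]
then decompose the quadratic nonlinearity by frequency. The paper's bookkeeping is slightly different (it writes $\b{\rm f}(\b{\rm u}_c)=\mathcal{A}+\mathcal{B}+\mathcal{C}+\mathcal{D}$ according to whether each factor lies above $N_0$, in $[N/10,N_0]$, or below $N/10$, using the fundamental theorem of calculus for the last piece), but your direct paraproduct would work just as well.

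There is, however, a genuine error in where you place the factor $\eta$. The bound \eqref{Small-compact} reads $\|\nabla P_{\le N_0}\b{\rm u}_c\|_{L_t^\infty L_x^2}\le\eta$: it is the \emph{low}-frequency $\dot H^1$ mass that is small, not the high-frequency tail. Almost periodicity together with $\inf_t\lambda(t)\ge 1$ forces the solution to live at frequency scales $\gtrsim 1$, so frequencies below a fixed threshold $N_0$ carry little kinetic energy. Your rule ``a $\dot H^1$-factor at frequency $>N_0$ contributes $\eta$, otherwise a universal constant'' is exactly reversed.

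This reversal is fatal for the recurrence. The sums $\sum_{N_1\le N_0}(\cdots)A(N_1)$ in \eqref{AN} arise from bilinear pieces $P_N(P_{N_1}\b{\rm u}_c\cdot P_{N_2}\b{\rm u}_c)$ in which \emph{both} $N_1,N_2\le N_0$ (for instance $N_1\sim N_2\sim N$ with $N\le N_0$). Under your assignment neither factor supplies an $\eta$, so you would obtain a recurrence with $O(1)$ coefficients in front of the $A(N_1)$-sums, and that cannot be closed by the Gronwall-type Lemma \ref{Gronwall}, which requires the coupling constant to be small. The correct allocation is: put one factor in $L^4_x$ (yielding $A(N_1)$) and the companion factor in an $L^2$-based norm; since that companion has frequency $\le N_0$, it contributes $\|\nabla P_{\le N_0}\b{\rm u}_c\|_{L^2}\le\eta$. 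The absolute term $(N/N_0)^{1/2}$ does come, as you say, from pieces with a factor above $N_0$, but it is produced by Bernstein (one gains $N_0^{-1/2}$ from $\|P_{>N_0}\b{\rm u}_c\|_{L^{12/5}_x}\lesssim N_0^{-1/2}$), not from any $\eta$-smallness.
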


\begin{proof}
Fix $ N \le 10 N_0 $. By time-translation symmetry, it suffices to prove
\begin{align}\nonumber
N^{-\frac12} \| \b{\rm u}_{c, 0} \|_{{\rm L}^{4}} \lesssim \left(\frac{N}{N_0}\right)^{\frac12} + & ~ \eta \sum_{\frac{N}{10} \le N_1 \le N_0} \left( \frac{N}{N_1} \right)^{\frac12} A(N_1) \\
+ & ~ \eta \sum_{N_1 < \frac{N}{10}} \left( \frac{N_1}{N} \right)^{\frac12} A(N_1).
\end{align}

By Lemma \ref{Bern}, Lemma \ref{disper}, and Proposition \ref{reduced}, we get
\begin{equation*}
\begin{aligned}
N^{-\frac12} \| P_N \b{\rm u}_{c, 0} \|_{{\rm L}^{4}(\R^6)}
\leq & ~ N^{-\frac12} \Big\| \int_{0}^{N^{-2}} \verb"S"(- \tau) P_N \b{\rm f}(\b{\rm u}_c(\tau)) {\rm d}\tau \Big\|_{{\rm L}_x^4(\R^6)} \\
& ~ + N^{-\frac12} \int_{N^{-2}}^{\infty} \big\| \verb"S"(- \tau) P_N\b{\rm f}(\b{\rm u}_c(\tau)) \big\|_{{\rm L}_x^4(\R^6)} {\rm d}\tau \\
\lesssim & ~ N^{\frac12} \| P_N \b{\rm f}(\b{\rm u}_c) \|_{{\rm L}_t^\infty(\R, {\rm L}_x^\frac43(\R^6))}.
\end{aligned}
\end{equation*}

Let $ \b{\rm f}(\b{\rm z}) = (f_1(\b{\rm z}), f_2(\b{\rm z}))^T $ for any $ \b{\rm z} = (z_1, z_2)^T \in \C^2 $, where $ f_1(\b{\rm z}) = z_2 \bar{z}_1 $ and $ f_2(\b{\rm z}) = z_1^2 $.
Thus, for $ \b{\rm u}_c = (u_c, v_c)^T $, we have
\begin{align*}
f_1(\b{\rm u}_c) - f_1(P_{>N_0} \b{\rm u}_c)
= & ~ v_c \overline{u_c} - (P_{>N_0} v_c) (P_{>N_0} \overline{u_c}) \\
= & ~ (P_{\le N} v_c + P_{>N} v_c) (P_{\le N} \overline{u_c} + P_{>N} \overline{u_c}) - (P_{>N} v_c) (P_{>N} \overline{u_c}) \\
= & ~ (P_{\le N} v_c) (P_{\le N} \overline{u_c}) + (P_{> N} v_c) (P_{\le N} \overline{u_c}) + (P_{\le N} v_c) (P_{> N} \overline{u_c}),
\end{align*}
and
\begin{align*}
f_2(\b{\rm u}_c) - f_2(P_{>N_0} \b{\rm u}_c)
= & ~ u_c^2 + (P_{>N_0} u_c)^2 \\
= & ~ (P_{\le N} u_c + P_{>N} u_c)^2 - (P_{>N} u_c)^2 \\
= & ~ (P_{\le N} u_c)^2 + 2 (P_{> N} u_c) (P_{\le N} u_c).
\end{align*}
By the Fundamental Theorem of Calculus, we obtain
\begin{align*}
f_i(\b{\rm z}^1) - f_i(\b{\rm z}^2)
= & ~ (\b{\rm z}^1-\b{\rm z}^2) \int_{0}^{1} \partial_{\b{\rm z}} f_i(\b{\rm z}^1 + \theta (\b{\rm z}^1-\b{\rm z}^2)) {\rm d}\theta \\
& ~ + \overline{(\b{\rm z}^1-\b{\rm z}^2)} \int_{0}^{1} \partial_{\bar{\b{\rm z}}} f_i(\b{\rm z}^1 + \theta (\b{\rm z}^1-\b{\rm z}^2)) {\rm d}\theta, \quad i=1,2.
\end{align*}
where $ \partial_{\b{\rm z}} f_1(\b{\rm z}) = (0, \bar{z}_1)^T $, $ \partial_{\bar{\b{\rm z}}} f_1(\b{\rm z}) = (z_2, 0)^T $, $ \partial_{\b{\rm z}} f_2(\b{\rm z}) = (2z_1, 0)^T $ and $ \partial_{\bar{\b{\rm z}}} f_2(\b{\rm z}) = \b{\rm 0} $.

Then, we get
\begin{align*}
\b{\rm f}(\b{\rm u}_c)
= & ~ (f_1(\b{\rm u}_c), f_2(\b{\rm u}_c))^T \\
= & ~ \Big( (P_{> N_0} v_c) (P_{\le N_0} \overline{u_c}) + (P_{\le N_0} v_c) (P_{> N_0} \overline{u_c}), 2 (P_{> N_0} u_c) (P_{\le N_0} u_c) \Big)^T \\
& ~ + \b{\rm f}(P_{>N_0} \b{\rm u}_c) + \b{\rm f}(P_{\frac{N}{10} \le \cdot \le N_0} \b{\rm u}_c) \\
& ~ + \Big( P_{<\frac{N}{10}} \b{\rm u}_c \int_{0}^{1} \partial_{\b{\rm z}} f_1(P_{\frac{N}{10} \le \cdot \le N_0} \b{\rm u}_c + \theta P_{<\frac{N}{10}} \b{\rm u}_c){\rm d}\theta \\
& ~ \quad + \overline{P_{<\frac{N}{10}} \b{\rm u}_c} \int_{0}^{1} \partial_{\bar{\b{\rm z}}} f_1(P_{\frac{N}{10} \le \cdot \le N_0} \b{\rm u}_c + \theta P_{<\frac{N}{10}} \b{\rm u}_c){\rm d}\theta, \\
& ~ \qquad P_{<\frac{N}{10}} \b{\rm u}_c \int_{0}^{1} \partial_{\b{\rm z}} f_2(P_{\frac{N}{10} \le \cdot \le N_0} \b{\rm u}_c + \theta P_{<\frac{N}{10}} \b{\rm u}_c) {\rm d}\theta \\
& ~ \qquad + \overline{P_{<\frac{N}{10}} \b{\rm u}_c} \int_{0}^{1} \partial_{\bar{\b{\rm z}}} f_2(P_{\frac{N}{10} \le \cdot \le N_0} \b{\rm u}_c + \theta P_{<\frac{N}{10}} \b{\rm u}_c){\rm d}\theta\Big)^T \\
=: & ~ \mathcal{A + B + C + D}.
\end{align*}
Using H\"older and Bernstein,
\begin{align*}
N^{\frac12} \left\Vert P_N (\mathcal{A + B}) \right\Vert_{{\rm L}_t^\infty(\R, {\rm L}_x^{\frac43}(\R^6))}
\lesssim & ~ N^{\frac12} \| P_{>N_0} \b{\rm u}_c \|_{{\rm L}_t^\infty(\R, {\rm L}_x^{\frac{12}5}(\R^6))} \| \b{\rm u}_c \|_{{\rm L}_t^\infty(\R, {\rm L}_x^{3}(\R^6))} \\
\lesssim & ~ N^{\frac12} N_0^{\frac12}.
\end{align*}

Next, we turn to estimate the fourth term $ \mathcal{D} $ and define $ \mathcal{D} = \mathcal{D}_1 + \mathcal{D}_2 $ for
\begin{equation*}
\begin{aligned}
\mathcal{D}_1 = & ~ \Big( P_{<\frac{N}{10}} \b{\rm u}_c \int_{0}^{1} \partial_{\b{\rm z}} f_1(P_{\frac{N}{10} \le \cdot \le N_0} \b{\rm u}_c + \theta P_{<\frac{N}{10}} \b{\rm u}_c){\rm d}\theta, \\
& ~ \quad + P_{<\frac{N}{10}} \b{\rm u}_c \int_{0}^{1} \partial_{\b{\rm z}} f_2(P_{\frac{N}{10} \le \cdot \le N_0} \b{\rm u}_c + \theta P_{<\frac{N}{10}} \b{\rm u}_c) {\rm d}\theta \Big)^T,
\end{aligned}
\end{equation*}
and
\begin{equation*}
\begin{aligned}
\mathcal{D}_2 = & ~ \Big( \overline{P_{<\frac{N}{10}} \b{\rm u}_c} \int_{0}^{1} \partial_{\bar{\b{\rm z}}} f_1(P_{\frac{N}{10} \le \cdot \le N_0} \b{\rm u}_c + \theta P_{<\frac{N}{10}} \b{\rm u}_c){\rm d}\theta, \\
& ~ \quad + \overline{P_{<\frac{N}{10}} \b{\rm u}_c} \int_{0}^{1} \partial_{\bar{\b{\rm z}}} f_2(P_{\frac{N}{10} \le \cdot \le N_0} \b{\rm u}_c + \theta P_{<\frac{N}{10}} \b{\rm u}_c){\rm d}\theta\Big)^T. \\
\end{aligned}
\end{equation*}
It suffices to consider $ \mathcal{D}_1 $  since $ \mathcal{D}_2 $ can be handled similarly.
Bernstein inequality yields
\[
\| P_{>\frac{N}{10}} \partial_{\b{\rm z}} f_i(\b{\rm u}_c) \|_{{\rm L}_t^\infty(\R, {\rm L}_x^2(\R^6))} \lesssim N^{-1} \| \nabla \b{\rm u}_c \|_{{\rm L}_t^\infty(\R, {\rm L}_x^2(\R^6))}, \qquad i = 1, 2.
\]
Thus, by H\"older's inequality  and \eqref{Small-compact},
\begin{align*}
& ~ N^{\frac12} \left\Vert P_N \mathcal{D}_1 \right\Vert_{{\rm L}_t^\infty(\R, {\rm L}_x^{\frac43}(\R^6))} \\
= & ~ N^{\frac12} \sum_{i=1}^2\Big\| P_N \Big( P_{<\frac{N}{10}} \b{\rm u}_c \int_{0}^{1} \partial_{\b{\rm z}} f_i(P_{\frac{N}{10} \le \cdot \le N_0} \b{\rm u}_c + \theta P_{<\frac{N}{10}} \b{\rm u}_c) {\rm d}\theta \Big) \Big\|_{{\rm L}_t^\infty(\R, {\rm L}_x^{\frac43}(\R^6))} \\
\lesssim & ~ N^{\frac12} \| P_{< \frac{N}{10}} \b{\rm u}_c \|_{{\rm L}_t^\infty(\R, {\rm L}_x^4(\R^6))}
\sum_{i=1}^2 \Big\| P_{> \frac{N}{10}} \Big( \int_{0}^{1} \partial_{\b{\rm z}} f_i(P_{\frac{N}{10} \le \cdot \le N_0} \b{\rm u}_c + \theta P_{<\frac{N}{10}} \b{\rm u}_c){\rm d}\theta \Big) \Big\|_{{\rm L}_t^\infty(\R, {\rm L}_x^2(\R^6))} \\
\lesssim & ~ N^{-\frac12} \| P_{< \frac{N}{10}} \b{\rm u}_c \|_{{\rm L}_t^\infty(\R, {\rm L}_x^4(\R^6))} \| \nabla P_{\le N_0} \b{\rm u}_c \|_{{\rm L}_t^\infty(\R, {\rm L}_x^2(\R^6))} \\
\lesssim & ~ \eta \sum_{N_1 < \frac{N}{10}} \Big( \frac{N_1}{N} \Big)^{\frac12} A(N_1).
\end{align*}
Then, we have
\begin{equation*}
N^{\frac12} \| P_{N}\mathcal{D} \|_{{\rm L}_t^\infty(\R, L_x^{\frac43}(\R^6))}
= N^{\frac12} \sum_{j=1}^2 \| P_{N}\mathcal{D}_j \|_{{\rm L}_t^\infty(\R, L_x^{\frac43}(\R^6))}
\lesssim \eta \sum_{N_1<\frac{N}{10}} \Big( \frac{N_1}{N} \Big)^{\frac12} A(N_1).
\end{equation*}

We are left to estimate the contribution of $ \mathcal{C} $. It remains to show
\[
\| P_N\mathcal{C} \|_{{\rm L}_t^\infty(\R, {\rm L}_x^{\frac43}(\R^6))} \lesssim \eta \sum_{\frac{N}{10} \leq N_1 \leq N_0} N_1^{-\frac12} A(N_1).
\]
Using the triangle inequality, Bernstein, \eqref{Small-compact} and H\"older, we  estimate as follow:
\begin{align*}
& ~ \| P_N\mathcal{C} \|_{{\rm L}_t^\infty(\R, {\rm L}_x^{\frac43}(\R^6))}
\lesssim \| \b{\rm f} (P_{\frac{N}{10} \le \cdot \le N_0} \b{\rm u}_c) \|_{{\rm L}_t^\infty(\R, L_x^{\frac43}(\R^6))} \\
\lesssim & ~ \sum_{\frac{N}{10} \le N_1, N_2 \le N_0} \Big( \| \overline{(P_{N_1} u_c)} (P_{N_2} v_c) \|_{L_t^\infty(\R, L_x^{\frac43}(\R^6))} + \| (P_{N_1} u_c) (P_{N_2} u_c) \|_{L_t^\infty(\R, L_x^{\frac43}(\R^6))} \Big) \\
\lesssim & ~ \eta \sum_{\frac{N}{10} \le N_1 \le N_2 \le N_0} N_2^{-1} \| P_{N_1} u_c \|_{L_t^\infty(\R, L_x^4(\R^6))} \\
& ~ + \sum_{\frac{N}{10} \le N_2 \le N_1 \le N_0} \| P_{N_1} u_c \|_{L_t^\infty(\R, L_x^2(\R^6))} \| P_{N_2} v_c \|_{L_t^\infty(\R, {\rm L}_x^4(\R^6))} \\
\lesssim & ~ \eta \sum_{\frac{N}{10} \le N_1 \le N_0} N_1^{-\frac12} A(N_1) + \eta \sum_{\frac{N}{10} \le N_2 \le N_1 \le N_0} \Big( \frac{N_2}{N_1} \Big) (N_2^{-\frac12} A(N_2)) \\
\lesssim & ~ \eta \sum_{\frac{N}{10} \le N_1 \le N_0} N_1^{-\frac12} A(N_1).
\end{align*}

Finally, we put the above estimates together to obtain
\begin{equation*}
\begin{aligned}
A(N) & ~ = N^{\frac12} \| P_{N}(\mathcal{A + B + C + D}) \|_{{\rm L}_t^\infty(\R, L_x^{\frac43}(\R^6))} \\
& ~ \lesssim_{\b{\rm u}_c} \left(\frac{N}{N_0}\right)^{\frac12}+\eta\sum_{\frac{N}{10}\leq N_1\leq N_0}\left(\frac{N}{N_1}\right)^{\frac12}A(N_1)+\eta\sum_{N_1<\frac{N}{10}}\left(\frac{N_1}{N}\right)^{\frac12}A(N_1),
\end{aligned}
\end{equation*}
which implies that \eqref{AN} holds.
\end{proof}

\begin{proposition}\label{ne}
Let $ \b{\rm u}_c $ be as in Theorem \ref{negative regularity}. Then
\[
\b{\rm u}_c \in {\rm L}_t^\infty(\R, {\rm L}_x^p(\R^6)) \quad \text{for} ~ \frac{14}{5} \le p < 3.
\]
In particular, by H\"older's inequality,
\[
\nabla \b{\rm f}(\b{\rm u}_c) \in {\rm L}_t^\infty(\R, {\rm L}_x^r(\R^6)) \quad \text{for} ~ \frac76 \le r < \frac65.
\]
\end{proposition}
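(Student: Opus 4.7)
The strategy is to convert the recurrence from Lemma \ref{recurrence} into low-frequency decay for $A(N)$ via Lemma \ref{Gronwall}, and then interpolate the resulting $L^4$ bound against the $L^2$ bound coming from $\dot H^1$ to obtain $L^p$ control for $p$ close to $3$.

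First I would verify that the recurrence fits the hypotheses of Lemma \ref{Gronwall} with $\gamma=\frac12$: with $N_0=N_0(\eta)$ as in \eqref{Small-compact} and the indexing $x_k:=A(2^{-k}\cdot 10 N_0)$, $b_k:=C_{\b{\rm u}_c}2^{-k/2}$, the two sums in \eqref{AN} re-index precisely as $\sum_{l\ge 0} 2^{-|k-l|/2}x_l$, and $\{x_k\}\in\ell^\infty$ thanks to Bernstein together with $\dot H^1(\R^6)\hookrightarrow L^3(\R^6)$, which gives $A(N)\lesssim \|\b{\rm u}_c\|_{\dot H^1}$. I then choose $\eta$ small enough that simultaneously $C\eta<(1-2^{-1/2})/2$ (so Lemma \ref{Gronwall} applies) and the resulting $r\in(2^{-1/2},1)$ satisfies $r<2^{-1/4}$, i.e.\ $s:=-\log_2 r>\tfrac14$. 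This is legitimate because Lemma \ref{Gronwall} guarantees $r\downarrow 2^{-1/2}$ as $\eta\downarrow 0$. Substituting $b_l=C2^{-l/2}$ into the Gronwall conclusion and evaluating the resulting geometric sum (using $r\sqrt2>1$ for $l\le k$ and $r/\sqrt2<1$ for $l\ge k$) yields $x_k\lesssim r^k$, i.e.
\[
\|P_N\b{\rm u}_c\|_{L_t^\infty L_x^4(\R^6)}=N^{1/2}A(N)\lesssim N^{1/2}(N/N_0)^{s}\qquad(N\le 10 N_0).
\]

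Next I would bound $\|\b{\rm u}_c\|_{L_x^p}$ by summing Littlewood--Paley pieces. For $N>N_0$, Bernstein from $L^2$ to $L^p$ combined with $\|P_N\b{\rm u}_c\|_{L^2}\lesssim N^{-1}\|\nabla\b{\rm u}_c\|_{L^2}$ gives $\|P_N\b{\rm u}_c\|_{L^p}\lesssim N^{2-6/p}\|\nabla\b{\rm u}_c\|_{L^2}$, summable over $N>N_0$ for every $p<3$. For $N\le N_0$, with $\theta=4/p-1$ so that $1/p=\theta/2+(1-\theta)/4$, Lyapunov's inequality gives
\[
\|P_N\b{\rm u}_c\|_{L^p}\lesssim \|P_N\b{\rm u}_c\|_{L^2}^\theta\|P_N\b{\rm u}_c\|_{L^4}^{1-\theta}\lesssim N^{(1-3\theta)/2}(N/N_0)^{s(1-\theta)}.
\]
Summability over dyadic $N\le N_0$ requires the net exponent $\tfrac12(1-3\theta)+s(1-\theta)$ to be strictly positive. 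At the endpoint $p=\tfrac{14}{5}$ one has $\theta=\tfrac37$, and the condition reduces to exactly $s>\tfrac14$ --- precisely the threshold built into the choice of $\eta$. For $p\in(14/5,3)$ the required lower bound on $s$ only decreases, so the same $\eta$ works uniformly. This establishes $\b{\rm u}_c\in L_t^\infty L_x^p$ for $\tfrac{14}{5}\le p<3$.

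The ``in particular'' statement is then immediate from H\"older's inequality: since the components of $\nabla\b{\rm f}(\b{\rm u}_c)$ are pointwise controlled by $|\b{\rm u}_c||\nabla\b{\rm u}_c|$, one takes $\tfrac1r=\tfrac1p+\tfrac12$ and reads off $r\in[\tfrac76,\tfrac65)$. The main delicacy in the argument is the bookkeeping in Step 2: one must check that the decay rate $s$ attainable from Lemma \ref{Gronwall} (which is only asymptotically $\tfrac12$ as $\eta\downarrow 0$) can be tuned strictly above $\tfrac14$ while still respecting the Gronwall applicability threshold $\eta<(1-2^{-1/2})/2$, and that this is precisely the threshold needed for the endpoint $p=\tfrac{14}{5}$; the endpoint nature of this match is what forces the particular constant $\tfrac{14}{5}$ to appear in the statement.
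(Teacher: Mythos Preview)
Your proof is correct and follows the same strategy as the paper's: feed the recurrence \eqref{AN} into the Gronwall-type Lemma \ref{Gronwall} to obtain low-frequency $L^4$ decay for $P_N\b{\rm u}_c$, then interpolate against the $L^2$ bound from $\dot H^1$ and sum over dyadic frequencies. Your handling of the Gronwall step is in fact more careful than the paper's---you correctly read off $\gamma=\tfrac12$ from \eqref{AN} (the paper writes $\gamma=1$) and track that the attainable decay exponent $s=-\log_2 r$ lies strictly below $\tfrac12$ but can be pushed above $\tfrac14$ by taking $\eta$ small, which is precisely the threshold needed for summability at the endpoint $p=\tfrac{14}{5}$.
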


\begin{proof}
Combining Lemma \ref{recurrence} with Lemma \ref{Gronwall}, we deduce
\begin{equation}
\Vert P_N \b{\rm u}_c \Vert_{{\rm L}_t^\infty(\R, {\rm L}_x^{\frac43}(\R^6))} \lesssim N \quad \text{for} ~ N \leq 10 N_0.
\label{<N}
\end{equation}
In fact, setting $N=10\cdot2^{-k}N_0,$ $x_k=A(10\cdot2^{-k}N_0)$ and $\gamma=1$ in Lemma \ref{Gronwall}, we can obtain
\[
A(10\cdot2^{-k}N_0)\lesssim \sum_{l=0}^{\infty} r^{|k-l|}(10\cdot2^{-l})^\frac12.
\]
Then taking $ \eta > 0 $ small enough such that $ r < \sqrt{2} $, we have
\[
A(10\cdot2^{-k}N_0)\lesssim (10\cdot2^{-k})^\frac12\sum_{l=0}^{\infty} r^{|k-l|}(10\cdot2^{-l+k})^\frac12\lesssim (10\cdot2^{-k})^\frac12
.\]
Observing the definition of $A(N)$, we deduce \eqref{<N} by
\[
\Vert P_N \b{\rm u}_c \Vert_{{\rm L}_t^\infty(\R, {\rm L}_x^{\frac43}(\R^6))} \lesssim N_0^{-\frac12} N^{\frac12}(10\cdot2^{-k}N_0)^\frac12 \lesssim N.
\]

By interpolation, we get
\begin{equation*}
\| P_N \b{\rm u}_c \|_{{\rm L}_t^\infty(\R, {\rm L}_x^p(\R^6))}
\lesssim \| P_N \b{\rm u}_c \|_{{\rm L}_t^\infty(\R, L_x^4(\R^6))}^{\frac{2(p-2)}{p}} \| P_N \b{\rm u}_c \|_{{\rm L}_t^\infty(\R, {\rm L}_x^2(\R^6))}^{\frac4p-2}
\lesssim N^{3-\frac{8}{p}}
\lesssim N^{\frac17}
\end{equation*}
for all $ N \leq 10 N_0 $. Using Lemma \ref{Bern},
\begin{equation*}
\begin{aligned}
\| \b{\rm u}_c \|_{{\rm L}_t^\infty(\R, {\rm L}_x^p(\R^6))}
\leq & ~ \| P_{\leq N_0} \b{\rm u}_c \|_{{\rm L}_t^\infty(\R, {\rm L}_x^p(\R^6))} + \| P_{>N_0} \b{\rm u}_c \|_{{\rm L}_t^\infty(\R, {\rm L}_x^p(\R^6))} \\
\lesssim & ~ \sum_{N \leq N_0} N^{\frac17} + \sum_{N > N_0} N^{2-\frac6p}
\lesssim 1,
\end{aligned}
\end{equation*}
which completes the proof of the proposition.
\end{proof}

We can use the decomposition of frequency to deduce the following proposition.

\begin{proposition}[Some negative regularity, \cite{Killip2010}]\label{n}
Let $ \b{\rm u}_c $ be as in Theorem \ref{negative regularity}.
If $ |\nabla|^s \b{\rm f}(\b{\rm u}_c) \in {\rm L}_t^\infty(\R, {\rm L}_x^p(\R^6)) $ for some $ \frac76 \leq p < \frac65 $ and some $ s \in [0, 1] $,
then there exists $ s_0 = s_0(p) > 0 $ such that $ \b{\rm u}_c \in {\rm L}_t^\infty(\R, {\rm\dot H}_x^{s-s_0+}(\R^6)) $.
\end{proposition}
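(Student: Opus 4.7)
The plan is to combine the reduced Duhamel formulas from Proposition~\ref{reduced} in both time directions (the double Duhamel trick) with the dispersive estimate of Lemma~\ref{disper} and Littlewood--Paley decomposition. Since $\b{\rm u}_c$ is almost periodic modulo symmetries with $\lambda(t)\geq 1$, the high-frequency piece $P_{>N_0}\b{\rm u}_c$ is automatically controlled in any ${\rm\dot H}^\alpha$ with $\alpha\le 1$ via Bernstein and the uniform ${\rm\dot H}^1$ bound. By time-translation symmetry it therefore suffices to establish
\begin{equation*}
\Vert P_N\b{\rm u}_c(0)\Vert_{{\rm L}^2_x} \lesssim_{\b{\rm u}_c} N^{(6/p-5)-s}, \qquad N\le N_0,
\end{equation*}
with implicit constant controlled by $\Vert\vert\nabla\vert^s\b{\rm f}(\b{\rm u}_c)\Vert_{{\rm L}^\infty_t{\rm L}^p_x}$.

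For each such $N$, Proposition~\ref{reduced} provides two weak representations
\begin{equation*}
P_N\b{\rm u}_c(0) = i\int_0^{\infty}\verb"S"(-\tau)P_N\b{\rm f}(\b{\rm u}_c(\tau))\,{\rm d}\tau = -i\int_{-\infty}^0\verb"S"(-\tau)P_N\b{\rm f}(\b{\rm u}_c(\tau))\,{\rm d}\tau,
\end{equation*}
and pairing them yields
\begin{equation*}
\Vert P_N\b{\rm u}_c(0)\Vert_{{\rm L}^2_x}^2 = \int_{-\infty}^0\!\!\int_0^{\infty}\bigl\langle P_N\b{\rm f}(\b{\rm u}_c(\tau_1)),\, \verb"S"(\tau_1-\tau_2)P_N\b{\rm f}(\b{\rm u}_c(\tau_2))\bigr\rangle\,{\rm d}\tau_2\,{\rm d}\tau_1.
\end{equation*}
I would split the $(\tau_1,\tau_2)$ integration at $\tau_2-\tau_1=N^{-2}$. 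On the near region $\tau_2-\tau_1\le N^{-2}$, apply Cauchy--Schwarz in $x$ together with Bernstein $\Vert P_N F\Vert_{{\rm L}^2_x}\lesssim N^{6(1/p-1/2)-s}\Vert\vert\nabla\vert^s P_N F\Vert_{{\rm L}^p_x}$; this region contributes a $(\tau_1,\tau_2)$-measure of order $N^{-4}$ and hence a bound of order $N^{12/p-10-2s}$. On the far region $\tau_2-\tau_1>N^{-2}$, use H\"older with exponents $(p,p')$ followed by the dispersive bound $\Vert\verb"S"(t)P_N g\Vert_{{\rm L}^{p'}_x}\lesssim\vert t\vert^{6/p'-3}\Vert P_N g\Vert_{{\rm L}^p_x}$, producing a time-decay factor $\vert\tau_2-\tau_1\vert^{3-6/p}$. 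Because $p<6/5$ we have $3-6/p<-2$, which makes this decay integrable on the quadrant $\{\tau_1<0<\tau_2\}$, and a direct change of variables shows the far-region contribution is also bounded by $CN^{12/p-10-2s}\Vert\vert\nabla\vert^s\b{\rm f}(\b{\rm u}_c)\Vert_{{\rm L}^\infty_t{\rm L}^p_x}^2$.

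Taking square roots gives the pointwise-in-$N$ estimate above, and setting $s_0:=6/p-5>0$ (positive precisely because $p<6/5$), the dyadic sum
\begin{equation*}
\sum_{N\le N_0}N^{2(s-s_0+\varepsilon)}\Vert P_N\b{\rm u}_c\Vert_{{\rm L}^2_x}^2 \;\lesssim\; \sum_{N\le N_0}N^{2\varepsilon}
\end{equation*}
converges as a geometric series for every $\varepsilon>0$, and combining with the high-frequency control delivers $\b{\rm u}_c\in{\rm L}^\infty_t{\rm\dot H}^{s-s_0+}_x$ as claimed. The main technical obstacle is carrying out the double time integration in the far region: the dispersive exponent $3-6/p$ together with the geometry of $\{\tau_1<0<\tau_2\}$ must produce enough gain in $N$ to beat the factor $N^{-2s}$ that arises when converting the $L^p$-norm of $P_N\b{\rm f}$ into the $L^p$-norm of $\vert\nabla\vert^s P_N\b{\rm f}$, and this is exactly what forces the upper restriction $p<6/5$; the lower restriction $p\ge 7/6$ is simply the natural range produced by Proposition~\ref{ne} through H\"older on the quadratic nonlinearity.
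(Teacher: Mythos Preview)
Your proposal is correct and follows precisely the double Duhamel argument of Killip--Visan \cite{Killip2010}: pair the forward and backward reduced Duhamel formulas, split at time separation $N^{-2}$, use Bernstein on the near region and the dispersive estimate on the far region, and exploit $3-6/p<-2$ to make the far-region quadrant integral converge to $N^{12/p-10}$. The paper does not supply its own proof of this proposition but simply cites \cite{Killip2010}, so your argument is exactly what the reference provides; the one-line remark in the paper (``decomposition of frequency'') refers to the Littlewood--Paley reduction to fixed $N\le N_0$ that you carry out at the start.
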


 To end the proof of Theorem \ref{negative regularity}, we first apply Proposition \ref{n} with $ s = 1 $ to show that $ \b{\rm u} \in {\rm L}_t^\infty({\rm\dot H}_x^{1-s_0+}(\R^6)) $ for some $ s_0 > 0 $.
The fractional chain rule and Proposition \ref{ne} tell us that $ |\nabla|^{1-s_0+} \b{\rm f}(\b{\rm u}_c) \in {\rm L}_t^\infty(\R, {\rm L}_x^p(\R^6)) $ for some $ \frac76 \leq p < \frac65 $.
Thus,another application of  Proposition \ref{n} facilitates us to obtain $ \b{\rm u} \in {\rm L}_t^\infty({\rm\dot H}_x^{1-2s_0+}(\R^6)) $.
Iterating this procedure finitely many times, we derive $ \b{\rm u} \in {\rm L}_t^\infty({\rm\dot H}_x^{-\eps}(\R^6)) $ for any $ 0 < \eps < s_0 $.
\end{proof}

\begin{proposition}\label{P=0}
If $ \b{\rm u}_c := (u_c, v_c)^T $ be a minimal-kinetic-energy blowing-up solution to \eqref{NLS system} satisfying $ \b{\rm u}_c \in {\rm L}_t^\infty(I_c, {\rm H}_x^1(\R^6)) $ and its initial data $ \b{\rm u}_{c, 0} $ to be non-radial with the mass-resonance condition or $ \b{\rm u}_{c, 0} $ to be radial, then its momentum is zero, that is,
\begin{equation}
P(\b{\rm u}_c) := \Im \int_{\mathbb{R}^6} \left( \overline{u}_c \nabla u_c + \frac{1}{2}\overline{v_c} \nabla v_c \right) {\rm d}x \equiv \b{\rm 0}.
\label{p=0}
\end{equation}
\end{proposition}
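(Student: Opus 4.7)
Since the momentum $P$ is conserved under \eqref{NLS system}, the plan is to reduce everything to showing $P(\b{\rm u}_{c,0})=\b{\rm 0}$ and to split according to the symmetry class of $\b{\rm u}_{c,0}$. The radial case is essentially a symmetry observation: writing $u_{c,0}(x)=U(|x|)$ and $v_{c,0}(x)=V(|x|)$ so that $\nabla u_{c,0}(x)=U'(|x|)\hat x$ (and similarly for $v_{c,0}$), each component of the integrand in $P(\b{\rm u}_{c,0})$ factors as a radial function times $\hat x$; passing to polar coordinates and using $\int_{S^5}\hat x\,d\sigma=\b{\rm 0}$ then kills both integrals.

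For the non-radial case with $\kappa=\tfrac12$ I would argue by contradiction using the Galilean invariance of \eqref{NLS system} recorded in the introduction. Fix $\xi\in\R^6$ and let $\b{\rm u}^{\xi}_c$ denote the Galilean boost of $\b{\rm u}_c$ by $\xi$. First I would establish four transformation laws by direct computation:
\[
M(\b{\rm u}^{\xi}_c)=M(\b{\rm u}_c),\qquad R(\b{\rm u}^{\xi}_c)=R(\b{\rm u}_c),\qquad S_{I_c}(\b{\rm u}^{\xi}_c)=S_{I_c}(\b{\rm u}_c),
\]
\[
H(\b{\rm u}^{\xi}_c)=H(\b{\rm u}_c)+|\xi|^2 M(\b{\rm u}_c)+2\xi\cdot P(\b{\rm u}_c),
\]
with the same identity for $E$ since $R$ is unchanged. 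The $R$-invariance is where $\kappa=\tfrac12$ is indispensable: it comes from the exact phase cancellation $\overline{e^{2ix\cdot\xi}}\cdot(e^{ix\cdot\xi})^2=1$. The $H$-identity is an expansion of $|\nabla(e^{ix\cdot\xi}u)|^2$ and $|\nabla(e^{2ix\cdot\xi}v)|^2$, and the coefficient $\tfrac12$ built into the definition of $P$ is precisely what makes the cross-term equal $2\xi\cdot P$. Finally, the invariance of $S_{I_c}$ is an immediate change of variables, since the boost is unimodular and $x\mapsto x-2t\xi$ preserves Lebesgue measure.

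Now assume $P_0:=P(\b{\rm u}_{c,0})\neq\b{\rm 0}$. Because $\b{\rm u}_c\in L_t^\infty(I_c,{\rm H}_x^1)$ is nontrivial, $M(\b{\rm u}_c)>0$, so I would set $\xi_\star:=-P_0/M(\b{\rm u}_c)$. Using the conservation of $M$ and $P$ along $\b{\rm u}_c(t)$, the $H$-identity gives
\[
\sup_{t\in I_c} H(\b{\rm u}^{\xi_\star}_c(t))=\sup_{t\in I_c} H(\b{\rm u}_c(t))-\frac{|P_0|^2}{M(\b{\rm u}_c)}<\sup_{t\in I_c} H(\b{\rm u}_c(t)),
\]
while $E(\b{\rm u}^{\xi_\star}_{c,0})=E(\b{\rm u}_{c,0})-|P_0|^2/M(\b{\rm u}_c)<E(\b{\rm W})$, $H(\b{\rm u}^{\xi_\star}_{c,0})<H(\b{\rm W})$ and $S_{I_c}(\b{\rm u}^{\xi_\star}_c)=\infty$. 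Thus $\b{\rm u}^{\xi_\star}_c$ is a blowing-up solution of \eqref{NLS system} in the same class as $\b{\rm u}_c$ but with strictly smaller supremum of kinetic energy, contradicting the minimality statement of Theorem \ref{minH}. The main obstacle will be the $R$-invariance calculation, since it is the one place where the mass-resonance assumption is essential; the other three transformation laws are routine manipulations, and the contradiction with Theorem \ref{minH} is immediate once they are in hand.
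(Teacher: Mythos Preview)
Your argument is correct and follows the same route as the paper: the radial case by odd symmetry of the integrand, and the non-radial mass-resonance case via the Galilean boost, the identity $H(\b{\rm u}_c^\xi)=H(\b{\rm u}_c)+|\xi|^2M(\b{\rm u}_c)+2\xi\cdot P(\b{\rm u}_c)$, the choice $\xi=-P(\b{\rm u}_c)/M(\b{\rm u}_c)$, and the minimality from Theorem~\ref{minH}. One correction to your commentary, though it does not affect the logic: the invariance of $R$ under the boost has nothing to do with $\kappa$ (the phase identity $\overline{e^{2ix\cdot\xi}}(e^{ix\cdot\xi})^2=1$ is automatic, and $R$ does not contain $\kappa$); the places where $\kappa=\tfrac12$ is genuinely indispensable are (i) that the boosted pair is again a solution of \eqref{NLS system}, and (ii) that the $|\xi|^2$- and $\xi$-coefficients in the expansion of $H(\b{\rm u}_c^\xi)$ reproduce exactly $M(\b{\rm u}_c)$ and $2P(\b{\rm u}_c)$.
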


\begin{proof}
First of all, we know that $ \b{\rm u}_c $ will be radial if its initial data $ \b{\rm u}_{c, 0} $ is radial.
Combining with the fact that $ \nabla f(x) = f^\prime(r) \frac{x}{r} $ for radial function $ f(r) := f(x) $ when $ r = \vert x \vert $, we have
\begin{equation*}
\begin{aligned}
P(\b{\rm u}_c) = & ~ \Im \int_{\mathbb{R}^6} \left( \overline{u}_c \nabla u_c + \frac{1}{2}\overline{v_c} \nabla v_c \right) {\rm d}x \\
= & ~ \Im \int_{\mathbb{R}^6} \left( \overline{u_c(r)} u_c^\prime(r) \frac{x}{r} + \frac{1}{2}\overline{v_c(r)} v_c^\prime(r) \frac{x}{r} \right) {\rm d}x \\
\equiv & ~ \b{\rm 0},
\end{aligned}
\end{equation*}
where we have used the fact that $ \int_{\R^6} g(x) {\rm d}x \equiv 0 $ for any odd function.

We are left to show the remaining case, i.e. $ \b{\rm u}_{c, 0} $ is  non-radial with the mass-resonance condition.
Under the mass-resonance condition, that is $\kappa=\frac{1}{2}$, \eqref{NLS system} enjoys the Galilean invariance property:
\[
\b{\rm u}_c(t, x)
= \left(
\begin{aligned}
u_c(t, x) \\
v_c(t, x)
\end{aligned}
\right) \to \left(
\begin{aligned}
u_c^{\xi}(t, x) \\
v_c^{\xi}(t, x)
\end{aligned}
\right)
=: \left(
\begin{aligned}
e^{ix\cdot\xi}e^{-it\vert\xi\vert^2}u_c(t, x-2t\xi) \\
e^{2ix\cdot\xi}e^{-2it\vert\xi\vert^2}v_c(t, x-2t\xi)
\end{aligned}
\right)
\]
for any $ \xi \in \mathbb{R}^6 $.
Then, for $ \b{\rm u}_c^{\xi} := (u_c^{\xi}, v_c^{\xi})^T $, we can compute
\[
\begin{aligned}
H(\b{\rm u}_c^{\xi}) = & ~ \Vert \nabla u_c^{\xi} \Vert_{L^2(\R^6)}^2 + \frac{\kappa}{2} \Vert \nabla v_c^{\xi} \Vert_{L^2(\R^6)}^2 \\
= & ~ \left\Vert i \xi u_c + \nabla u_c \right\Vert_{L^2(\R^6)}^2 + \frac{\kappa}{2} \left\Vert 2i \xi v_c + \nabla v_c \right\Vert_{L^2(\R^6)}^2 \\
= & ~ \int_{\R^6} \left\vert \Re(\nabla u_c) - \xi \Im u_c \right\vert^2 + \left\vert \xi \Re u_c + \Im(\nabla u_c) \right\vert^2 {\rm d}x \\
 & ~ + \frac{\kappa}{2} \int_{\R^6} \left\vert \Re(\nabla v_c) - 2\xi \Im v_c \right\vert^2 + \left\vert 2\xi \Re v_c + \Im(\nabla v_c) \right\vert^2 {\rm d}x \\
= & ~ \int_{\R^6} \left\vert \nabla u_c \right\vert^2 + \vert \xi \vert^2 \vert u_c \vert^2 + 2\xi \left( \Re u_c \Im(\nabla u_c) - \Re(\nabla u_c) \Im u_c \right) {\rm d}x \\
 & ~ + \frac{\kappa}{2} \int_{\R^6} \left\vert \nabla v_c \right\vert^2 + 4 \vert \xi \vert^2 \vert v_c \vert^2 + 4\xi \left( \Re v_c \Im(\nabla v_c) - \Re(\nabla v_c) \Im v_c \right) {\rm d}x.
\end{aligned}
\]
Note that $ \kappa = \frac12 $, we have
\[
\begin{aligned}
& ~ H(\b{\rm u}_c^{\xi}) - H(\b{\rm u}_c) - \vert \xi \vert^2 M(\b{\rm u}_c) \\
= & ~ 2\xi \cdot \int_{\R^6} \big( \Re u_c \Im(\nabla u_c) - \Re(\nabla u_c) \Im u_c \big) \\
& ~ \qquad \qquad + \frac12 \big( \Re v_c \Im(\nabla v_c) - \Re(\nabla v_c) \Im v_c \big) {\rm d}x \\
= & ~ 2\xi \cdot \int_{\R^6} \Im(\overline{u_c} \nabla u_c) + \frac12 \Im(\overline{v_c} \nabla v_c) {\rm d}x
= 2\xi \cdot P(\b{\rm u}_c).
\end{aligned}
\]

Because $ S_{I_c^{\xi}}(\b{\rm u}_c^{\xi}) = S_{I_c}(\b{\rm u}_c) = \infty $, $ \b{\rm u}_c^{\xi} $ is a blowing-up solution to \eqref{NLS system} as well.
Combining with the fact that $ \b{\rm u}_c $ has the minimal kinetic energy among all the blowing-up solutions, we obtain
\[
\vert \xi \vert^2 M(\b{\rm u}_c) + 2\xi \cdot P(\b{\rm u}_c) = H(\b{\rm u}_c^{\xi}) - H(\b{\rm u}_c) \ge 0
\]
holds for any $ \xi \in \R^6 $.
Setting $ \xi = -\frac{P(\b{\rm u}_c)}{M(\b{\rm u}_c)} $, we find
\[
0 \le \vert \xi \vert^2 M(\b{\rm u}_c) + 2\xi \cdot P(\b{\rm u}_c) = -\frac{\vert P(\b{\rm u}_c) \vert^2}{M(\b{\rm u}_c)} \le 0,
\]
which implies \eqref{p=0}.
\end{proof}

Deduced from the property of negative regularity (Theorem \ref{negative regularity}) and zero momentum (Proposition \ref{P=0}) the movement of spatial center function $ x(t) $ of soliton-like solutions can be controlled, which can help us use Lemma \ref{virial} more effectively, combined with the finite mass of soliton-like solution ($ \lambda(t) \ge 1 $).

\begin{corollary}[Control of spatial center function $ x(t) $, \cite{Killip2010}]\label{x(t)}
Let $ \b{\rm u}_c $ be a soliton-like solution as in Theorem \ref{enemies}.
Then, for any $ \eta > 0 $, there exists $ C(\eta) > 0 $ such that
\[
\sup_{t \in \R} \int_{\vert x - x(t) \vert \ge C(\eta)} \vert \b{\rm u}_c(t, x) \vert^2 {\rm d}x \lesssim \eta.
\]
Furthermore, the spatial center function $ x(t) $ satisfies
\[
\vert x(t) \vert = o(t), \qquad t \to \infty.
\]
\end{corollary}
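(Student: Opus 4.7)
The plan is to establish the $L^2$-concentration by upgrading $\dot H^1$-almost periodicity with the help of the negative regularity of Theorem \ref{negative regularity}, and then to derive $|x(t)|=o(t)$ via a truncated first-moment argument powered by the vanishing of the momentum $P(\b{\rm u}_c)\equiv\b{\rm 0}$ from Proposition \ref{P=0}.

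For the first assertion, I would first interpolate the global bound $\b{\rm u}_c\in L^\infty_t\dot H^{-\varepsilon}(\R^6)$ coming from Theorem \ref{negative regularity} with the uniform $\dot H^1$ bound \eqref{u_c} to place $\b{\rm u}_c$ in $L^\infty_tL^2_x(\R^6)$. Given $\eta>0$, I would then control the $L^2$-mass on $\{|x-x(t)|\ge R\}$ by splitting $\b{\rm u}_c=P_{\le N}\b{\rm u}_c+P_{>N}\b{\rm u}_c$: the high-frequency piece is small in $L^2$ by Bernstein and the uniform $\dot H^1$ bound once $N=N(\eta)$ is large, while the low-frequency piece is handled by combining the $\dot H^1$-almost periodicity (Definition \ref{almost periodic}, specialized to $\lambda\equiv 1$) with $\|P_{\le N}f\|_{L^2}\lesssim N^{\varepsilon}\||\nabla|^{-\varepsilon}f\|_{L^2}$. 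Equivalently, the orbit $\{\b{\rm u}_c(t,\cdot+x(t))\}_t$ is precompact in $\dot H^1(\R^6)$ and uniformly bounded in $\dot H^{-\varepsilon}(\R^6)$, hence precompact in $L^2(\R^6)=[\dot H^{-\varepsilon},\dot H^1]_{\theta}$ with $\theta=\varepsilon/(1+\varepsilon)$, and precompactness in $L^2$ is exactly tightness by Lemma \ref{compactness in L^2}, which is the stated estimate.

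For $|x(t)|=o(t)$, fix a radial cutoff $\phi\in C_c^\infty(\R^6)$ with $\phi\equiv 1$ on $\{|x|\le 1\}$ and $\mathrm{supp}\,\phi\subset\{|x|\le 2\}$, and introduce the truncated first moment
\[
X_R(t):=\int_{\R^6}(2\kappa|u_c|^2+|v_c|^2)\,x\,\phi(x/R)\,{\rm d}x.
\]
Applying Lemma \ref{virial}(1) componentwise with $a(x)=x_j\phi(x/R)$ expresses $X_R'(t)$ as $4\kappa P(\b{\rm u}_c)$ truncated by $\phi(\cdot/R)$, plus a boundary term localized on $\{|x|\sim R\}$. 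Since $P(\b{\rm u}_c)=\b{\rm 0}$, invoking Cauchy--Schwarz, the uniform bound $\|\nabla\b{\rm u}_c\|_{L^2}\lesssim 1$, and the $L^2$-concentration from the first part, one obtains $|X_R'(t)|\lesssim\eta^{1/2}$ whenever $R\ge|x(t)|+C(\eta)$. Separately, decomposing the defining integral of $X_R(t)$ into $\{|x-x(t)|\le C(\eta)\}$ and its complement, and using $|x|\le 2R$ on $\mathrm{supp}\,\phi(\cdot/R)$, yields the comparison $|X_R(t)-M_\rho(t)\,x(t)|\lesssim C(\eta)+\eta R$, where $M_\rho(t):=\int(2\kappa|u_c|^2+|v_c|^2)\,{\rm d}x\sim 1$ uniformly in $t$ (by the $L^2$-bound of the first part and $\b{\rm u}_c\not\equiv\b{\rm 0}$).

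To close the argument, fix $T>0$ and choose $R_T:=\sup_{0\le s\le T}|x(s)|+2C(\eta)$, so that the differential bound on $X_{R_T}'$ applies throughout $[0,T]$. The fundamental theorem of calculus then gives $|X_{R_T}(T)-X_{R_T}(0)|\lesssim\eta^{1/2}T$, which, combined with the comparison estimates at $t=0$ and $t=T$, produces the self-improving inequality
\[
|x(T)|\lesssim|x(0)|+C(\eta)+\eta\sup_{0\le s\le T}|x(s)|+\eta^{1/2}T.
\]
Taking the supremum over $T\in[0,T_0]$, choosing $\eta$ small so the $\eta\sup|x(s)|$ term on the right can be absorbed, dividing by $T_0$, and sending $T_0\to\infty$ yields $\limsup_{t\to\infty}|x(t)|/t\lesssim\eta^{1/2}$; arbitrariness of $\eta$ concludes. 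The main obstacle is exactly this self-referential choice of $R_T$: because $R_T$ grows with $\sup_{s\le T}|x(s)|$, the $\eta R$ error in the comparison is not uniformly small, and only the absorption step after taking $\sup_{T\le T_0}$ overcomes it, crucially exploiting the smallness afforded by $P(\b{\rm u}_c)\equiv\b{\rm 0}$ together with the $L^2$-concentration from the first part.
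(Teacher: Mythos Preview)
The paper does not give its own proof of this corollary; it simply invokes \cite{Killip2010}. Your proposal is a correct adaptation of the Killip--Visan argument to the present system: the $L^2$-tightness via precompactness of the orbit in $\dot H^1$ together with the uniform $\dot H^{-\varepsilon}$ bound from Theorem \ref{negative regularity} (your ``equivalently'' interpolation route is the clean way; the preceding frequency-splitting sketch is a bit loose since $P_{\le N}$ does not respect spatial localization), followed by the truncated first-moment identity from Lemma \ref{virial}(1), the vanishing of $P(\b{\rm u}_c)$ from Proposition \ref{P=0}, and the self-improving bootstrap on $\sup_{s\le T}|x(s)|$. This is precisely the scheme the paper is citing. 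One small caveat worth recording: Lemma \ref{virial}(1) as written is only exact when $\kappa=\tfrac12$ (otherwise an extra term $(4\kappa-2)\Im\int a\,u^2\bar v$ appears), but this is harmless here since the non-radial case is exactly the mass-resonance case $\kappa=\tfrac12$, while in the radial case one may take $x(t)\equiv 0$ and the second assertion is trivial.
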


\subsection{Exclusion of soliton-like solution}\label{D1}
Now, we  use Virial identity to rule out the soliton-like solution.
 Its compactness has been described as an almost periodic solution.
From the control of spatial center function $ x(t) $, Corollary \ref{x(t)}, we know that $ x(t) = o(t) $ which means $ \b{\rm u}_c $ moves slower and slower.
That implies the nonexistence of soliton-like solution for a large time.

\begin{theorem}\label{death2}
There exist no soliton-like solutions to \eqref{NLS system}.
\end{theorem}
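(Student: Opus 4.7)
The plan is to run a truncated virial argument on $[0,T]$ with $T\to\infty$, exploiting the slow motion of the spatial center function for a soliton-like solution. I first upgrade $\b{\rm u}_c$ to have finite mass: since $\lambda(t)\equiv 1$ and $\sup_t H(\b{\rm u}_c(t))\le H_c<H(\b{\rm W})$, Theorem \ref{negative regularity} gives $\b{\rm u}_c\in {\rm L}_t^\infty \dot{\rm H}^{-\eps}$, and interpolating against the uniform $\dot{\rm H}^1$ bound yields $\b{\rm u}_c\in {\rm L}_t^\infty {\rm L}^2$. Under the standing assumptions (radial, or non-radial with mass-resonance), Proposition \ref{P=0} forces $P(\b{\rm u}_c)\equiv\b{\rm 0}$, and Corollary \ref{x(t)} then supplies both the uniform compactness estimate $\int_{|x-x(t)|\ge C(\eta)}|\b{\rm u}_c|^2\,{\rm d}x\le\eta$ and the crucial slow-drift estimate $|x(t)|=o(t)$ as $t\to\infty$.

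Next, introduce the truncated virial $V_R(t):=\int_{\R^6}(2\kappa|u_c|^2+|v_c|^2)a_R(x)\,{\rm d}x$ with $a_R(x)=R^2\Gamma(|x|^2/R^2)$ as in the proof of Theorem \ref{bu}. By Lemma \ref{virial}(2),
\[
V_R''(t)=8\kappa\bigl[2H(\b{\rm u}_c)-3R(\b{\rm u}_c)\bigr]+\mathcal{E}_R(t),
\]
where $\mathcal{E}_R(t)$ collects the error terms, all of which are supported in the annulus $\{R\le |x|\lesssim R\}$. Proposition \ref{coer} together with Corollary \ref{Energy trapping} and the conservation of energy gives $8\kappa[2H(\b{\rm u}_c(t))-3R(\b{\rm u}_c(t))]\ge 8\kappa\delta''H(\b{\rm u}_c(t))\ge c_0>0$ uniformly in $t$, for some $c_0=c_0(E(\b{\rm u}_{c,0}))$. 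For a fixed small $\eta>0$ I then choose $R=R(T):=C(\eta)+2\sup_{0\le t\le T}|x(t)|$; by the slow drift this forces $R(T)=o(T)$, while for every $t\in[0,T]$ one has the inclusion $\{|x|\gtrsim R\}\subset\{|x-x(t)|\ge C(\eta)\}$. A routine estimate using the uniform compactness, the finite mass, and Gagliardo-Nirenberg on the annulus bounds $|\mathcal{E}_R(t)|\lesssim_{\b{\rm u}_c}\eta$, so upon taking $\eta\ll c_0$ one obtains $V_R''(t)\ge c_0/2$ for all $t\in[0,T]$.

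On the other hand, by Lemma \ref{virial}(1), Cauchy-Schwarz and the uniform bounds on $M(\b{\rm u}_c)$ and $H(\b{\rm u}_c)$ give $|V_R'(t)|\lesssim R$ uniformly in $t$. Integrating the lower bound on $V_R''$ over $[0,T]$ produces
\[
\tfrac{c_0}{2}\,T \;\le\; V_R'(T)-V_R'(0) \;\lesssim\; R(T)=o(T),
\]
which is a contradiction for $T$ large enough. Hence no soliton-like solution can exist.

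The main obstacle is the tension between choosing $R$ large enough that the cutoff weight $a_R$ captures essentially all of $\b{\rm u}_c(t)$ on $[0,T]$ (so $\mathcal{E}_R$ is negligible) and simultaneously keeping $R$ much smaller than $T$ so that the linear-in-$T$ lower bound $c_0 T$ cannot be absorbed by $|V_R'|\lesssim R$. This tension is resolved precisely by the slow-drift estimate $|x(t)|=o(t)$ from Corollary \ref{x(t)}, which in turn depends on the zero-momentum property that is available either from radiality or from Galilean invariance under the mass-resonance condition.
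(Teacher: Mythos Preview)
Your proposal is correct and follows essentially the same route as the paper: obtain finite mass from Theorem \ref{negative regularity} by interpolation, invoke Corollary \ref{x(t)} for the slow-drift $|x(t)|=o(t)$, run the truncated virial with $R$ tied to $C(\eta)+\sup_{[0,T]}|x(t)|$, use Proposition \ref{coer} and Corollary \ref{Energy trapping} for the uniform lower bound on $V_R''$, and then contradict $|V_R'|\lesssim R=o(T)$ against $V_R''\gtrsim c_0$ integrated over $[0,T]$. The only cosmetic difference is your choice of the cutoff $a_R=R^2\Gamma(|x|^2/R^2)$ from the proof of Theorem \ref{bu} instead of the paper's $|x|^2$-then-zero cutoff, which is immaterial to the argument.
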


\begin{proof}
We argue by contradiction and suppose $ \b{\rm u}_c : \R \times \R^6 \to \C^2 $ is a soliton-like solution to \eqref{NLS system}.
By Definition \ref{almost periodic} and $ L^3(\R^6) \hookrightarrow {\dot H}^1(\R^6) $, for any $ \eta > 0 $, there exists $ C(\eta) > 0 $ such that
\begin{equation}
\sup_{t \in \R} \int_{\vert x - x(t) \vert \ge C(\eta)} \left( \vert \nabla \b{\rm u}_c \vert^2 + \vert \b{\rm u}_c \vert^3 \right) {\rm d}x \le \eta.
\label{eta}
\end{equation}
Using Corollary \ref{x(t)}, we know there exists $ T_0 = T_0(\eta) \in \R $ such that
\begin{equation}
\vert x(t) \vert \le \eta t, \qquad \forall ~ t \ge T_0.
\label{x=0}
\end{equation}

Recalling Lemma \ref{virial}, for $ a(x) $ be a radial smooth function satisfying
\[
a(x) = \left\{
\begin{aligned}
\vert x \vert^2, \quad & \vert x \vert \le R, \\
0, \qquad & \vert x \vert \ge 2R,
\end{aligned}
\right.
\]
where $ R > 0 $ will be chosen later, we define
\[
V_R(t) = \int_{\R^6} \left( 2 \kappa \vert u_c \vert^2 + \vert v_c \vert^2 \right) a(x) {\rm d}x.
\]
By Theorem \ref{negative regularity} and interpolation theorem, we know $ \b{\rm u}_c := (u_c, v_c)^T \in {\rm L}_t^\infty(\R, {\rm L}_x^2(\R^6)) $.
Then we get
\begin{equation}
\left\vert V_R^\prime(t) \right\vert = \left\vert 2\kappa \Im \int_{\mathbb{R}^6} \left( 2 \overline{u_c} \nabla u_c + \overline{v_c} \nabla v_c \right) \cdot \nabla a(x) {\rm d}x \right\vert
\lesssim_\kappa R H(\b{\rm u}_c) M(\b{\rm u}_c) \lesssim R,
\label{Vp}
\end{equation}
and
\[
\begin{aligned}
V_R^{\prime\prime}(t) = & ~ 8\kappa \Re \int_{\mathbb{R}^6} \left( \overline{u_{c, j}} u_{c, k} + \frac{\kappa}{2} \overline{v_{c, j}} v_{c, k} \right) a_{jk}(x) {\rm d}x \\
& ~ - 2\kappa \int_{\mathbb{R}^6} \left( \vert u_c \vert^2 + \frac{\kappa}{2} \vert v_c \vert^2 \right) \Delta \Delta a(x) {\rm d}x - 2\kappa \Re \int_{\mathbb{R}^6} \overline{v_c} u_c^2 \Delta a(x) {\rm d}x \\
= & ~ 8\kappa [ 2 H(\b{\rm u}_c) - 3 R(\b{\rm u}_c) ] \\
& ~ + O\left( \int_{\vert x \vert \ge R} \vert \nabla \b{\rm u}_c \vert^2 + \vert \b{\rm u}_c \vert^3 {\rm d}x \right) + O\left( \int_{R \le \vert x \vert \le 2R} \vert \b{\rm u}_c \vert^3 {\rm d}x \right)^{\frac23}.
\end{aligned}
\]
For $ \eta > 0 $ small enough, choosing
\[
R := C(\eta) + \sup_{T_0 \le t \le T_1} \vert x(t) \vert
\]
for any $ T_1 > T_0 $, by coercivity of energy (Proposition \ref{coer}), energy trapping (Corollary \ref{Energy trapping}), and \eqref{eta}, we have
\begin{equation}
V^{\prime\prime}(t) \gtrsim_\kappa E(\b{\rm u}_{c, 0}).
\label{Vpp}
\end{equation}

Applying the Fundamental Theorem of Calculus on $ [T_0, T_1] $, by \eqref{x=0}, \eqref{Vp} and \eqref{Vpp}, we obtain
\[
(T_1 - T_0) E(\b{\rm u}_{c, 0})  \lesssim_{\kappa} R = C(\eta) + \sup_{T_0 \le t \le T_1} \vert x(t) \vert \le C(\eta) + \eta T_1, \quad \forall ~ T_1 > T_0.
\]
Setting first $ \eta \to 0 $ and then $ T_1 \to \infty $, we find $ E(\b{\rm u}_{c, 0}) = 0 $.
Using the conservation of energy and energy trapping again, we know $ H(\b{\rm u}_c(t)) = 0, ~ \forall ~ t \in \R $.
By \eqref{=H_c}, we get $ H_c = 0 $ which is a contradiction with $ H_c \ge \delta_{sd} > 0 $ in \eqref{H_c}.
\end{proof}

\subsection{Exclusion of low-to-high frequency cascade}\label{D2}

 Our last enemy is the low-to-high cascade. An important ingredient is the negative regularity lemma, which indicates that ${\rm u}_c$ also falls into the $L^2$ space.

\begin{theorem}\label{death3}
There exist no low-to-high frequency cascade to \eqref{NLS system}.
\end{theorem}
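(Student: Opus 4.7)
The plan is to argue by contradiction: assume $\b{\rm u}_c:\R\times\R^6\to\C^2$ is a low-to-high frequency cascade as in Proposition \ref{enemies}(3) and derive $\b{\rm u}_c\equiv\b{\rm 0}$, which contradicts \eqref{u_c}. The overall strategy is to promote $\b{\rm u}_c$ to a solution with finite conserved mass, then use the cascade assumption $\lambda(t_n)\to\infty$ to force this mass to vanish.

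First I would upgrade the regularity. Since $I_c=\R$, $\sup_{t\in\R}H(\b{\rm u}_c(t))<H(\b{\rm W})$, and $\inf_{t\in\R}\lambda(t)\ge 1$, Theorem \ref{negative regularity} yields some $\eps>0$ with $\b{\rm u}_c\in L_t^\infty(\R;{\rm\dot H}^{-\eps}(\R^6))$. Interpolating with $\b{\rm u}_c\in L_t^\infty{\rm\dot H}^1$ gives $\b{\rm u}_c\in L_t^\infty {\rm L}^2(\R^6)$, so the mass $M(\b{\rm u}_c(t))\equiv M_0$ is a finite conserved quantity of \eqref{NLS system}. Now pick $t_n\to+\infty$ with $\lambda(t_n)\to\infty$, and for a small parameter $\eta>0$ split the mass at the Fourier scale $N_n:=C(\eta)\lambda(t_n)$. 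The high-frequency part vanishes in the limit by the frequency tail in Definition \ref{almost periodic}:
\begin{equation*}
\int_{|\xi|>N_n}|\widehat{\b{\rm u}_c}(t_n,\xi)|^2\,d\xi\le N_n^{-2}\int_{|\xi|>N_n}|\xi|^2|\widehat{\b{\rm u}_c}|^2\,d\xi\le \eta\, N_n^{-2}\longrightarrow 0.
\end{equation*}

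For the low-frequency part, the crude bound $\|P_{\le N_n}\b{\rm u}_c(t_n)\|_{{\rm L}^2}^2\le N_n^{2\eps}\|\b{\rm u}_c\|_{{\rm\dot H}^{-\eps}}^2$ grows in $\lambda(t_n)$ and is therefore useless. Instead I would apply $P_{\le N_n}$ to the two reduced Duhamel formulas of Proposition \ref{reduced} and pair them, obtaining the double-integral identity
\begin{equation*}
\|P_{\le N_n}\b{\rm u}_c(t_n)\|_{{\rm L}^2}^2=\int_{t_n}^{+\infty}\!\!\int_{-\infty}^{t_n}\bigl\langle \verb"S"(\sigma-\tau)P_{\le N_n}\b{\rm f}(\b{\rm u}_c(\tau)),\,P_{\le N_n}\b{\rm f}(\b{\rm u}_c(\sigma))\bigr\rangle\,d\sigma\,d\tau.
\end{equation*}
I would split the domain into a near-diagonal zone $|\sigma-\tau|\le T$, controlled via $L^2$-unitarity of $\verb"S"$ and Bernstein estimates (Lemma \ref{Bern}) on $P_{\le N_n}\b{\rm f}(\b{\rm u}_c)$, and a far-diagonal zone $|\sigma-\tau|>T$, controlled via the dispersive estimate of Lemma \ref{disper}. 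Optimizing $T=T(N_n)$ should produce $\|P_{\le N_n}\b{\rm u}_c(t_n)\|_{{\rm L}^2}^2\lesssim_{\b{\rm u}_c} N_n^{-\delta}$ for some $\delta>0$, which also vanishes. Together with the previous step this forces $M_0=0$, hence $\b{\rm u}_c\equiv\b{\rm 0}$, contradicting $\sup_t H(\b{\rm u}_c(t))\ge H_c>0$ from \eqref{H_c}.

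The hardest part will be executing this bilinear double-Duhamel estimate with the correct exponents for the quadratic nonlinearity $\b{\rm f}(\b{\rm u})=(v\overline{u},u^2)^T$: one must balance the Bernstein penalty from the frequency truncation $P_{\le N_n}$ against the dispersive decay rate $|\sigma-\tau|^{6/r-3}$ of $\verb"S"$, and exploit that the newly gained inclusion $\b{\rm u}_c\in L_t^\infty({\rm\dot H}^1\cap {\rm L}^2)$ (together with Sobolev embedding on $\R^6$) provides uniform $L_x^p$ control of $\b{\rm f}(\b{\rm u}_c)$ over a wide range of exponents, so that both the near- and far-diagonal pieces are comparable negative powers of $N_n$ at the optimal $T$.
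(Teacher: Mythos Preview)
Your overall framework---contradiction, upgrade to finite mass via Theorem~\ref{negative regularity} and interpolation, then force $M(\b{\rm u}_c)=0$ along a sequence $\lambda(t_n)\to\infty$---is exactly the paper's, and your high-frequency estimate is correct. The gap is in the low-frequency step: the double-Duhamel argument you sketch does not produce a negative power of $N_n$. With $\b{\rm u}_c\in L_t^\infty({\rm L}^2\cap{\rm\dot H}^1)$ one only has $\b{\rm f}(\b{\rm u}_c)\in L_t^\infty {\rm L}_x^q$ for $1\le q\le\tfrac32$. On the near-diagonal zone the $L^2$ pairing forces a Bernstein step $L^{3/2}\to L^2$ costing $N$, so that piece is $\lesssim T^2 N^2$. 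On the far-diagonal zone the dispersive factor $|\sigma-\tau|^{6/r-3}$ must be integrated against the wedge measure $s\,ds$, which requires $r>6$ (hence $r'<\tfrac65$) and yields at best $T^{-1}$. Optimizing in $T$ gives $\|P_{\le N_n}\b{\rm u}_c(t_n)\|_{{\rm L}^2}^2\lesssim N_n^{2/3}$, which \emph{grows}; no admissible choice of exponents flips the sign.

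What you are missing is that almost periodicity also controls the \emph{low}-frequency kinetic energy. By Theorem~\ref{minH} and Remark~\ref{compactness in H^1} the rescaled orbit is precompact in ${\rm\dot H}^1$, i.e.\ $\nabla$ of the rescaled family lies in a compact subset of $L^2$; any such set has uniformly small Fourier mass near the origin as well as at infinity. Undoing the rescaling, for every $\eta>0$ there is $c(\eta)>0$ with
\[
\int_{|\xi|\le c(\eta)\lambda(t)}|\xi|^2\,|\widehat{\b{\rm u}_c}(t,\xi)|^2\,d\xi\le\eta\qquad\text{for all }t\in\R.
\]
Interpolating this with the bound $\int |\xi|^{-2\eps}|\widehat{\b{\rm u}_c}|^2\,d\xi\lesssim_{\b{\rm u}_c}1$ from Theorem~\ref{negative regularity} (H\"older with $1=|\xi|^{2\theta}|\xi|^{-2\eps(1-\theta)}$, $\theta=\tfrac{\eps}{1+\eps}$) gives
\[
\int_{|\xi|\le c(\eta)\lambda(t)}|\widehat{\b{\rm u}_c}(t,\xi)|^2\,d\xi\lesssim_{\b{\rm u}_c}\eta^{\eps/(1+\eps)},
\]
which is the paper's route: no double Duhamel is needed, and one then sends $n\to\infty$ (killing the high-frequency piece) followed by $\eta\to0$ to conclude $M(\b{\rm u}_c)=0$.
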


\begin{proof}
We argue by contradiction and suppose $ \b{\rm u}_c : I_c \times \R \to \C^2 $ is a low-to-high frequency cascade solution to \eqref{NLS system}.
Similar to the proof of Theorem \ref{death2}, we know $ \b{\rm u}_c \in {\rm L}_t^\infty(\R, {\rm L}_x^2(\R^6)) $.
Noticing the conservation of mass, we obtain
\[
0 \le M(\b{\rm u}_c) = M(\b{\rm u}_c(t)) := \int_{\R^6} \vert \b{\rm u}_c(t, x) \vert^2 {\rm d}x < \infty,
\]
for any $ t \in \R $.

On one hand, fixing $ t \in \R $ and choosing $ \eta > 0 $ to be a small number, by Remark \ref{compactness in H^1} and Theorem \ref{minH}, we have
\begin{equation}
\int_{\vert \xi \vert \le C(\eta) \lambda(t)} \vert \xi \vert^2 \vert \widehat{\b{\rm u}_c}(t, \xi) \vert^2 {\rm d}\xi < \eta.
\label{<H1}
\end{equation}
At the same time, since $ \b{\rm u}_c \in {\rm L}_t^\infty(\R, {\dot\rm H}_x^{-\varepsilon}(\R^6)) $, we know
\begin{equation}
\int_{\vert \xi \vert \le C(\eta) \lambda(t)} \vert \xi \vert^{-2\varepsilon} \vert \widehat{\b{\rm u}_c}(t, \xi) \vert^2 {\rm d}\xi \lesssim 1.
\label{<H-}
\end{equation}
Thus, interpolating \eqref{<H1} and \eqref{<H-} yields
\begin{equation}
\int_{\vert \xi \vert \le C(\eta) \lambda(t)} \vert \widehat{\b{\rm u}_c}(t, \xi) \vert^2 {\rm d}\xi \lesssim \eta^{\frac{\varepsilon}{1 + \varepsilon}}.
\label{<L2}
\end{equation}

On the other hand, by Theorem \ref{minH},
\begin{equation}
\begin{aligned}
\int_{\vert \xi \vert > C(\eta) \lambda(t)} \vert \widehat{\b{\rm u}_c}(t, \xi) \vert^2 {\rm d}\xi
\le & ~ [C(\eta) \lambda(t)]^{-2} \int_{\R^6} \vert \xi \vert^2 \vert \widehat{\b{\rm u}_c}(t, \xi) \vert^2 {\rm d}\xi \\
\lesssim & ~ [C(\eta) \lambda(t)]^{-2} H(\b{\rm u}_c(t)) \\
< & ~ [C(\eta) \lambda(t)]^{-2} H(\b{\rm W}).
\end{aligned}
\label{>L2}
\end{equation}

Combining \eqref{<L2}, \eqref{>L2}, and Plancheral's theorem, we get the conclusion that
\[
0 \le M(\b{\rm u}_c) \lesssim \eta^{\frac{\varepsilon}{1 + \varepsilon}} + [C(\eta) \lambda(t)]^{-2}, \qquad \forall ~ t \in \R.
\]
By Definition \ref{enemies}, there exists a time sequence $ \{ t_n \}_{n=1}^\infty \subset \R^+ $ such that
\[
\lim_{n \to \infty} t_n = +\infty, \quad \text{and} \quad \lim_{n \to \infty} \lambda(t_n) = +\infty.
\]
{\color{red} },
\[
0 \le \lim_{n \to \infty} M(\b{\rm u}_c(t_n)) \lesssim \eta^{\frac{\varepsilon}{1 + \varepsilon}}.
\]
Let $ \eta \to 0 $, we have $ M(\b{\rm u}_c(t_n)) \to 0, ~ n \to \infty $.
Finally,  by the conservation of mass, we know $ \b{\rm u}_c \equiv \b{\rm 0} $, which is a contradiction with \eqref{u_c} in Theorem \ref{enemies}.
\end{proof}

\end{document}